\title{Global existence of solutions to the chemotaxis system with logistic source under nonlinear Neumann boundary conditions}
\author{Minh Le }
\date{\today}
\begin{document}
\maketitle
\begin{abstract}
    We consider classical solutions to the chemotaxis system with logistic source $f(u) := au-\mu u^2$ under nonlinear Neumann boundary conditions $\frac{\partial u}{ \partial \nu } = |u|^{p}$ with $p>1$ in a smooth convex bounded domain $\Omega \subset \mathbb{R}^n$ where $n \geq 2$. This paper aims to show that if $p<\frac{3}{2}$, and $\mu >0$, $n=2$, or $\mu$ is sufficiently large when $n\geq 3$, then the parabolic-elliptic chemotaxis system admits a unique positive global-in-time classical solution that is bounded in $\Omega \times (0, \infty)$. The similar result is also true if $p<\frac{3}{2}$, $n=2$, and $\mu>0$ or $p<\frac{7}{5}$, $n=3$, and $\mu $ is sufficiently large for the parabolic-parabolic chemotaxis system. 
\end{abstract}

\numberwithin{equation}{section}
\newtheorem{theorem}{Theorem}[section]
\newtheorem{lemma}[theorem]{Lemma}
\newtheorem{remark}{Remark}[section]
\newtheorem{Prop}{Proposition}[section]
\newtheorem{Def}{Definition}[section]
\newtheorem{Corollary}{Corollary}[theorem]
\allowdisplaybreaks

\section{Introduction} \label{Introduction}
We are concerned in this paper with solutions to the chemotaxis model as follows:
\begin{equation} \label{1.1}
    \begin{cases} \tag{KS}
u_{t}=  \Delta u - \chi \nabla \cdot (u \nabla v) +au -\mu u^2   \qquad &x\in {\Omega},\, t \in (0,T_{\rm max }), \\ 
 \tau v_t  = \Delta v+\alpha u - \beta v \qquad &x\in {\Omega},\, t \in (0,T_{\rm max }),
\end{cases} 
\end{equation}
in a smooth, convex, bounded domain $\Omega \subset \mathbb{R}^n$ where $\alpha,\beta, a, \mu >0$, $\tau \geq 0$, and $\chi \in \mathbb{R}$. The system \eqref{1.1} is complemented with the nonnegative initial conditions in $C^{2+\gamma}(\Omega)$, where $\gamma \in (0,1)$, not identically zero:
\begin{equation} \label{1.1.2}
    u(x,0) = u_0(x), \qquad v(x,0)=v_0(x), \qquad x\in \Omega,
\end{equation}
and the nonlinear Neumann boundary conditions
\begin{equation} \label{boundary-data}
    \frac{\partial u}{\partial \nu } = |u|^p, \qquad \frac{\partial v}{\partial \nu }  = 0, \qquad x\in \partial \Omega,\, t \in (0, T_{\rm max}).
\end{equation}
where $p>1$ and $\nu $ is the outward normal vector. The PDE system in \eqref{1.1} is used in mathematical biology to model the mechanism of chemotaxis, that is, the movement of an organism in response to a chemical stimulus.  Here $u(x,t),v(x,t)$ represent the cell density and the concentration of chemical substances at position $x$ at the time $t$, respectively (\cite{Keller}). Historically, the system \eqref{1.1} without the logistic source under homogeneous Neumann boundary condition well-known as the most simplified Keller-Segel system has been intensively studied in many various directions such as global boundedness solutions, blow-up solutions, steady states, and blow-up sets.  A significant milestone in the derivation of the model, along with many interesting results, can be found in \cite{Horstmann}. Additionally, \cite{Winkler-2015} provides a comprehensive summary of the developed techniques and results accumulated over decades to address chemotaxis systems. For a more detailed exploration of the derivation of the system \eqref{1.1}, consider referring to \cite{Hillen}, which offers a broad survey of variations of chemotaxis models and their corresponding biological backgrounds. Furthermore, in two spatial dimension, the simplest form has been the focus of numerous research papers due to its intriguing mathematical property, the critical mass phenomena. This property states that if the initial mass is less than a certain number, solutions exist globally and remain bounded in time, while if the mass is larger than that value, solutions blow up in finite time. There are many research papers focusing on finding the precise value of critical mass. In two spatial dimensional domain, global existence and boundedness of solutions are studied in \cite{Dolbeault}, and \cite{Dolbeault1} for the sub-critical mass, and finite time blow-up solutions for the super-critical mass is investigated in \cite{Nagai1, Nagai2, Nagai4, Nagai3}. However, in higher spatial dimensional domain, this critical mass property is no longer true. In fact, it is proved in \cite{Winkler-2010} that finite time blow-up solutions can be constructed in a smooth bounded domain regardless of how small the mass is. \\
\indent We now shift our attention to the homogeneous Neumann boundary condition. In addition to the biological chemotaxis phenomena, the logistic term, $au-\mu u^2$, introduced in the evolution equation for $u$ plays a role in describing the growth of the population. Specifically, the term $au$, with $a \in \mathbb{R}$, is the growth rate of population and the term $-\mu u^2$ models additional overcrowding effects. It was investigated in \cite{TW} that the quadratic degradation term $-\mu u^2$ can prevent blow-up solutions. In fact, it was proven that if $\mu >\frac{n-2}{n}\chi \alpha$, and $\tau =0$, then the solutions exist globally and remain bounded at all time in a convex bounded domain with smooth boundary $\Omega \subset \mathbb{R}^n$, where $n \geq 2$. This result was later improved in \cite{Hu+Tao,KA,Tian5} that $\mu = \frac{n-2}{n}\chi \alpha$ can prevent blow-up when $\tau =0$. In a two-dimensional space with $\tau =1$, the system \eqref{1.1}  possesses a unique classical solution which is nonnegative and bounded in $\Omega \times (0, \infty)$ ( see e.g. \cite{ OY,OTYM}). These results were later improved in \cite{Tian4, Tian2} by replacing the logistic sources by sub-logistic ones such as $au-\mu \frac{u^2}{\ln^p(u+e)}$ for $p\in (0,1)$. In a higher-dimensional space with $\tau =1$, the similar results can be found in \cite{Winkler-2013} for the parabolic-parabolic model with an additional largeness assumption of $\mu$. In addition to the global classical solutions, the existence global weak solutions results for any arbitrary $\mu>0$ were also obtained in  \cite{TW} for the parabolic-elliptic models and  in \cite{Lankeit1} for the parabolic-parabolic models in a three-dimensional system. Furthermore, interested readers are referred to  \cite{Jian+Xiang, Lankeit2,LMLW, Li+Wang, LMZ, Tao+Winkler, MW2011, YCJZ, ZLBZ} to study more about qualitative and quantitative works of chemotaxis systems with logistic sources. \\
\indent The problem becomes more interesting and challenging if the homogeneous Neumann boundary condition is replaced by the nonlinear Neumann boundary condition. The method in this paper to obtain global boundedness results is first to establish a $L^1$ estimate, then for $L^{p_0}$ for some $p_0>1$, and finally apply a Moser-type iteration to obtain for $L^\infty$. Although this approach has been widely applied in treating global boundedness problems for reaction-diffusion equations (\cite{ Alikakos2,Alikakos1, Filo1989}), or for chemotaxis systems (\cite{Winkler-2013}), the main difficulties rely heavily on tedious integral estimations. Unlike the homogeneous Neumann boundary conditions, it is not even straightforward to see whether the total mass of the cell density function is globally bounded or not due to the nonlinear boundary term. In fact, most of the technical challenges in this paper are to deal with the nonlinear boundary term. Fortunately, the Sobolev's trace inequality enables us to solve a part of a problem:\\
\indent \textbf{Main Question:} \textit{"What is the largest value $p$ so that logistic damping still avoids blow-up?"} \\
This types of question for nonlinear parabolic equations has been intensively studied in 1990s. To be more precise, if we consider $\chi =0 $, our problem is similar to the following PDE:
\begin{equation} \tag{NBC}  \label{NBC}
    \begin{cases} 
U_{t}=  \Delta U-\mu U^Q  \qquad &x\in {\Omega},\, t \in (0,T_{\rm max }), \\ 
 \frac{\partial U}{\partial \nu } = U^P \qquad &x\in {\Omega},\, t \in (0,T_{\rm max }),  \\
 U(x,0) = U_0(x)  &x\in {\bar{\Omega}}.
\end{cases} 
\end{equation}
where  $\Omega $ is a smooth bounded domain in $\mathbb{R}^n$, $Q,P>1$, $\mu>0$ and $U_0 \in W^{1,\infty}(\Omega)$ is a nonnegative function. The study concerning the global existence was first investigated in \cite{CFQ}, and then improved in \cite{Quittner1} for $n \geq 2$. Particularly, it was shown that $P=\frac{Q+1}{2}$ is critical for the blow up in the following sense:
\begin{enumerate}
    \item if $P<\frac{Q+1}{2}$ then all solutions of \eqref{NBC} exist globally and are globally bounded,
    \item If $P>\frac{Q+1}{2}$ (or $P=\frac{Q+1}{2}$ and $\mu $ is sufficiently small ) then there exist initial functions $U_0$ such that the corresponding solutions of \eqref{NBC} blow-up in $L^{\infty}-$norm.
\end{enumerate}
\indent In comparison to our problem, we have $Q=2$ and $P=\frac{3}{2}$ is the critical power. Indeed, we also obtain the similar critical power $p=\frac{3}{2}$ as in Theorem \ref{p-e-e}. Notice that the local existence of positive solution was not mentioned in \cite{CFQ, Quittner1, Quittner}, and it is not clear for us to define $U^P$ without knowing $U$ is nonnegative, so the presence of absolute sign in \eqref{boundary-data} is necessary to obtain local positive solutions from nonnegative, not identically zero initial data.   \\
\indent Heuristically, the analysis diagram can be presented as follows. In case $\tau =0$, by substituting $-\Delta v= \alpha u-\beta v$ into the first equation of \eqref{1.1}, we obtain
 \[u_t= \Delta u +au -\chi \nabla u \cdot \nabla v+(\chi -\mu)u^2-\chi uv.\] If $\mu $ is sufficiently large, then solutions might be bounded globally since the nonlinear term $(\chi -\mu)u^2$ might dominate other terms including the nonlinear boundary term. In case $\tau =1$, we cannot substitute $\Delta v =\beta v-\alpha u$ directly into the first equation of \eqref{1.1}; however, we still have some certain controls of $ v$ by $u$ from the second equation of \eqref{1.1} thanks to Sobolev inequality. We expect that this intuition should be true in lower spacial dimension and "weaker" nonlinear boundary terms since the critical Sobolev exponent decreases if the spacial dimension increases. Indeed, our analysis does not work for $n \geq 4$ since we do not have enough rooms to control other positive nonlinear terms by using the term $(\chi -\mu )u^2$. One can also find similar ideas on sub-logistic source preventing 2D blow-up in \cite{Tian4}. \\
We summarize the main results to answer a part of the main question. Let us begin with the following theorem for the parabolic-elliptic case. 
\begin{theorem} \label{p-e-e}
Let  $\Omega $ be a bounded, convex domain with smooth boundary in $\mathbb{R}^n$ where $n \geq 2$, and $\tau =0$. If $\mu > \frac{n-2}{n}\chi \alpha$, and $1<p<\frac{3}{2}$ or $\mu= \frac{n-2}{n} \chi \alpha$ with $n\geq 3$ and $1<p<1+\frac{1}{n}$ then the system \eqref{1.1} with initial conditions \eqref{1.1.2} and boundary condition \eqref{boundary-data} possesses a unique positive classical solution which remains bounded in $\Omega \times (0, \infty)$.  
\end{theorem}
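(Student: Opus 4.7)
The plan is to follow the three-step scheme announced in the introduction---$L^1$ bound, then $L^{p_0}$ bound with $p_0>n/2$, then $L^\infty$ via Moser iteration---within the framework of local existence combined with the standard extensibility criterion. First I would invoke parabolic theory for nonlinear-boundary problems to produce a unique maximal classical positive solution $(u,v)$ on $[0,T_{\max})$, satisfying the blow-up criterion $T_{\max}<\infty\Rightarrow\|u(\cdot,t)\|_{L^\infty(\Omega)}\to\infty$; the absolute value in \eqref{boundary-data} is precisely what allows nonnegative initial data to give rise to a positive solution. Substituting the elliptic identity $\Delta v=\beta v-\alpha u$ into the first equation of \eqref{1.1} then yields the pointwise identity
\begin{equation*}
u_t=\Delta u-\chi\nabla u\cdot\nabla v+(\chi\alpha-\mu)u^2-\chi\beta uv+au,
\end{equation*}
whose coefficient $\chi\alpha-\mu$ of $u^2$ carries the dissipative mechanism encoded in the hypothesis $\mu>\frac{n-2}{n}\chi\alpha$.

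For the $L^q$ estimate I would test this reduced equation against $u^{q-1}$ and integrate by parts, using $\partial_\nu v=0$, to obtain
\begin{equation*}
\frac{1}{q}\frac{d}{dt}\int_\Omega u^q+\frac{4(q-1)}{q^2}\int_\Omega|\nabla u^{q/2}|^2+\Bigl(\mu-\frac{\chi\alpha(q-1)}{q}\Bigr)\int_\Omega u^{q+1}\leq\int_{\partial\Omega}u^{p+q-1}+a\int_\Omega u^q-\frac{\chi\beta(q-1)}{q}\int_\Omega u^q v,
\end{equation*}
where elliptic regularity for $-\Delta v+\beta v=\alpha u$ controls $\|v\|_{L^\infty}$ in terms of a higher Lebesgue norm of $u$. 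Under $\mu>\frac{n-2}{n}\chi\alpha$ one may fix $q>n/2$ so that $\mu-\frac{\chi\alpha(q-1)}{q}>0$, while the borderline case $\mu=\frac{n-2}{n}\chi\alpha$ forces $q$ to the critical value $n/2$ and produces the tighter window $p<1+\frac{1}{n}$. The boundary integral $\int_{\partial\Omega}u^{p+q-1}=\|u^{q/2}\|_{L^s(\partial\Omega)}^s$ with $s=2(p+q-1)/q$ is then estimated via a trace-Gagliardo-Nirenberg inequality in terms of $\|\nabla u^{q/2}\|_{L^2(\Omega)}$ and a lower-order $L^r$-norm of $u^{q/2}$; an elementary exponent count shows that Young's inequality permits absorbing both the gradient and the $L^{q+1}$ contributions into the good left-hand side precisely when $p<\frac{3}{2}$. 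The special case $q=1$ of the same derivation yields the baseline $L^1$ bound.

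Finally, given a uniform $L^{p_0}$ bound with $p_0>n/2$, elliptic theory upgrades to a uniform $\|\nabla v\|_{L^\infty(\Omega)}$ bound, and a Moser-type iteration along an exponentially growing sequence $q_k\to\infty$---applied to the $L^q$ recursion above---produces the sought $L^\infty$ estimate and thereby contradicts $T_{\max}<\infty$. The decisive technical obstacle throughout is the boundary contribution $\int_{\partial\Omega}u^{p+q-1}$: every iteration step requires a fresh trace inequality whose constants must not grow faster than the diffusion and the logistic damping together can swallow, and it is exactly the restriction $p<\frac{3}{2}$ (respectively $p<1+\frac{1}{n}$ in the critical case) that makes this Young-inequality bookkeeping feasible uniformly in $q_k$. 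Verifying that the iterated trace constants stay under control as $q_k\to\infty$ is the main technical content of the proof.
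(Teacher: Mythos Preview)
Your proposal follows essentially the same route as the paper: local existence with extensibility criterion, multiplication by $u^{q-1}$ after substituting $\Delta v=\beta v-\alpha u$, a trace/Young estimate on $\int_{\partial\Omega}u^{p+q-1}$ that closes exactly when $p<3/2$ (the paper packages this as Lemma~\ref{Boundary-est}), and a Moser iteration from $L^{r_0}$ with $r_0>n/2$ to $L^\infty$ (the paper's Theorem~\ref{bdduthm}). One simplification you can borrow from the paper: rather than invoking elliptic regularity for the $-\chi\beta uv$ term, simply drop it using $v\ge0$.

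There is, however, a genuine gap in your treatment of the borderline case $\mu=\tfrac{n-2}{n}\chi\alpha$. At $q=n/2$ the damping coefficient $\mu-\tfrac{\chi\alpha(q-1)}{q}$ vanishes, so the $L^q$ energy argument yields at best a uniform $L^{n/2}$ bound---but your Moser step explicitly requires $r_0>n/2$, and you have not explained how to cross this threshold. The paper's resolution (see the proof of Theorem~\ref{p-e-e}, equations \eqref{proof.thm.5}--\eqref{proof.thm.8}) is a two-stage bootstrap: first establish the $L^{n/2}$ bound using a sharper boundary estimate that needs $p<1+\tfrac{1}{n}$; then, with the $L^{n/2}$ bound in hand, Gagliardo--Nirenberg gives $\int_\Omega u^{2r+1}\le C\int_\Omega|\nabla u^r|^2+C$ with a constant independent of $r$, so for $r$ slightly above $n/4$ one can absorb the now-\emph{positive} term $[(2r-1)\chi\alpha-2r\mu]\int_\Omega u^{2r+1}$ (which is $O(r-\tfrac{n}{4})$) into the gradient term by choosing $\epsilon$ in the window \eqref{proof.thm.8}. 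This is the step that actually produces an exponent strictly above $n/2$, and without it the borderline argument does not close.
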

\begin{remark}
It is an open question whether there exists a classical finite time blow-up solution if $p \geq \frac{3}{2}$.
\end{remark}
\begin{remark}
   The proof of borderline boundedness in Theorem \ref{p-e-e} when $\mu = \frac{n-2}{n} \chi \alpha$ is adopted and modified from the arguments in \cite{Hu+Tao,KA,Tian5}.  However, applying Lemma \ref{Boundary-est} to overcome challenges in boundary integral estimations was not possible. Instead, we had to derive an alternative and improved estimation to handle the boundary term, which necessitated the condition of $p<1+\frac{1}{n}$.
\end{remark}
The next theorem  is for the parabolic-parabolic system in a two-dimensional space:
\begin{theorem} \label{2dthm}
Let  $\Omega$ be a bounded, convex domain with smooth boundary, and $\tau =1$, $n=2$, $1<p< \frac{3}{2}$, then the system \eqref{1.1} with initial conditions \eqref{1.1.2} and boundary condition \eqref{boundary-data} possesses a unique positive classical solution which remains bounded in $\Omega \times (0, \infty)$. 
\end{theorem}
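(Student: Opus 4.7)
My plan is to implement the three-step programme announced in the introduction (uniform $L^{1}$-bound $\to L^{q_{0}}$-bound for some $q_{0}>1 \to L^{\infty}$-bound), adapted to the parabolic-parabolic setting $\tau=1$. The obstruction relative to Theorem~\ref{p-e-e} is that the chemical $v$ can no longer be eliminated by substituting $\Delta v=\beta v-\alpha u$, so $v$ must be tracked through its own parabolic equation. The compensating feature is that in $n=2$ the trace embedding $W^{1,r}(\Omega)\hookrightarrow L^{s}(\partial\Omega)$ is available for every finite $s$ whenever $r>1$, which provides ample slack in the boundary-integral estimates.

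First I would invoke standard parabolic theory for systems with nonlinear Neumann data to obtain a unique classical solution $(u,v)$ on a maximal interval $(0,T_{\max})$, together with the usual extensibility criterion forcing a higher-order norm of $u$ to diverge if $T_{\max}<\infty$. The strong maximum principle applied to $u$ (legitimate since $|u|^{p}\ge 0$) gives positivity on $(0,T_{\max})$, and subsequent a priori estimates are carried out on smooth positive solutions.

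The technical heart is a coupled quasi-energy estimate in the spirit of Osaki--Yagi. Testing the first equation by $u^{q-1}$ and the second by a suitable power of $-\Delta v$ (or by $v$ itself, iteratively), I would assemble a Lyapunov functional of the form $\int_{\Omega}u^{q}+\kappa\int_{\Omega}|\nabla v|^{2}$ whose time derivative satisfies, schematically,
\[
\frac{d}{dt}\mathcal{L}(t)+\mathcal{D}(t)+\mu\int_{\Omega}u^{q+1}\le C\int_{\partial\Omega}u^{q+p-1}+\text{(lower order)},
\]
where $\mathcal{D}(t)$ collects the dissipations $\int|\nabla u^{q/2}|^{2}$ and $\int|\Delta v|^{2}$, and the chemotactic cross-term is absorbed into $\mathcal{D}(t)+\mu\int u^{q+1}$ via Young's inequality. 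Applying a trace Gagliardo--Nirenberg inequality to $w=u^{q/2}$, the boundary term $\int_{\partial\Omega}w^{2(q+p-1)/q}$ is dominated by $\varepsilon\mathcal{D}(t)$ plus a remainder controllable by the logistic absorber, with the exponent count closing precisely when $p<3/2$ in two dimensions. Starting from $q=1$ (where the same trace estimate, applied at its coarsest level, yields the base $L^{1}$-bound) and iterating delivers $\sup_{t<T_{\max}}\|u(\cdot,t)\|_{L^{q_{0}}(\Omega)}<\infty$ for some $q_{0}>1$.

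Finally, once an $L^{q_{0}}$-bound is secured, standard parabolic $L^{p}$-estimates applied to the $v$-equation provide uniform control of $\nabla v$ in a large $L^{r}$-space, and a Moser-type iteration on the $u$-equation---with the nonlinear Neumann flux handled once more by the trace inequality---propagates the bound to $\|u(\cdot,t)\|_{L^{\infty}(\Omega)}$, contradicting the extensibility criterion and forcing $T_{\max}=\infty$. The hard part is keeping the exponent budget tight: the trace inequality, the chemotactic cross-term coming from $\chi\nabla\cdot(u\nabla v)$, and the auxiliary regularity of $\nabla v$ extracted from the second equation all compete for the same dissipation in the coupled estimate, and the threshold $p<\tfrac{3}{2}$ is precisely where the balance can still be closed in two spatial dimensions.
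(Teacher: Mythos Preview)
Your overall programme is right, but the specific functional you propose does not close, and the paper's proof differs from yours at exactly the two places where your sketch is vaguest. First, the base step ``$q=1$'' fails: testing the $u$-equation by $u^{0}=1$ produces \emph{no} gradient dissipation, so there is nothing to absorb the boundary flux $\int_{\partial\Omega}u^{p}$. The paper instead jumps straight to an $L\log L$ estimate by testing with $\ln(u+1)+1$, which yields the dissipation $\int_{\Omega}|\nabla u|^{2}/(u+1)$ needed to control the trace term; the resulting coupled functional is $\int_{\Omega}(u+1)\ln(u+1)+\int_{\Omega}|\nabla v|^{2}$ (Lemma~\ref{i-es}). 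Second, your Lyapunov candidate $\int_{\Omega}u^{q}+\kappa\int_{\Omega}|\nabla v|^{2}$ cannot be closed for any $q>1$: after Young's inequality the chemotactic cross-term contributes a piece of order $\int_{\Omega}u^{2q}$ (since $\int u^{q}|\nabla v|^{2}\le \varepsilon\int|\Delta v|^{2}+C_{\varepsilon}\int u^{2q}$ once $\int|\nabla v|^{2}$ is bounded), and $2q>q+1$ for $q>1$, so the logistic term $-\mu\int_{\Omega}u^{q+1}$ is too weak to swallow it.

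The paper's remedy (Lemma~\ref{L2est}) is to replace $\int_{\Omega}|\nabla v|^{2}$ by $\int_{\Omega}|\nabla v|^{4}$ in the second step, using the pointwise identity $\nabla v\cdot\nabla\Delta v=\tfrac{1}{2}\Delta|\nabla v|^{2}-|D^{2}v|^{2}$ and the convexity of $\Omega$ to produce the dissipation $\int_{\Omega}|\nabla|\nabla v|^{2}|^{2}$. In two dimensions this, together with the previously obtained $L\log L$ bound and the logarithmic Gagliardo--Nirenberg inequality \eqref{unif-GN}, gives $\int_{\Omega}u^{3}\le \varepsilon\int_{\Omega}|\nabla u|^{2}+C$ and $\int_{\Omega}|\nabla v|^{6}\le C\int_{\Omega}|\nabla|\nabla v|^{2}|^{2}+C$, so the cross-term $\int_{\Omega}u^{2}|\nabla v|^{2}\le \epsilon\int|\nabla v|^{6}+C_{\epsilon}\int u^{3}$ is absorbed and the functional $\tfrac{1}{2}\int u^{2}+\tfrac{1}{4}\int|\nabla v|^{4}$ stays bounded. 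That furnishes an $L^{2}$ bound on $u$ with $2>n/2=1$, and Theorem~\ref{bdduthm} finishes. In short: the $L\log L$ step is not optional in $n=2$, and the $v$-part of the functional must be upgraded to $|\nabla v|^{4}$ to generate enough dissipation.
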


In three-dimensional space, we prove the following theorem for the parabolic-parabolic case.
\begin{theorem} \label{3d}
Let  $\Omega$ be a bounded, convex domain with smooth boundary, and $\tau =1$, $n=3$, $1<p< \frac{7}{5}$, then there exists $\mu_0>0$ such that for every $\mu > \mu_0$, the system \eqref{1.1} with initial conditions \eqref{1.1.2} and boundary condition \eqref{boundary-data} possesses a unique positive classical solution which remains bounded in $\Omega \times (0, \infty)$. 
\end{theorem}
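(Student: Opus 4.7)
The plan is to follow the $L^1\to L^{q_0}\to L^\infty$ strategy that the introduction advertises. A local positive classical solution on a maximal interval $[0,T_{\rm max})$ is first produced by the standard fixed-point argument for chemotaxis systems; the absolute value in $\partial_\nu u=|u|^p$ allows starting from a nonnegative $u_0$, and the strong maximum principle then keeps $u$ positive for $t>0$. It therefore suffices to show $\sup_{t<T_{\rm max}}\|u(\cdot,t)\|_{L^\infty(\Omega)}<\infty$, since parabolic Schauder theory together with the usual extensibility criterion then forces $T_{\rm max}=\infty$ and yields global boundedness.

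The technical heart of the argument is an $L^{q}$-estimate for $u$ at some $q>n/2=3/2$. Testing the first equation with $u^{q-1}$ and integrating by parts, using $\nabla\cdot(u\nabla v)=\nabla u\cdot\nabla v+u\Delta v$ together with $\partial_\nu v=0$, collapses the chemotaxis contribution to $-\tfrac{\chi(q-1)}{q}\int_\Omega u^q\Delta v$. Substituting $\Delta v=v_t-\alpha u+\beta v$ from the second equation (with $\tau=1$) produces an inequality of the shape
\[
\frac{1}{q}\frac{d}{dt}\int_\Omega u^q+\frac{4(q-1)}{q^2}\int_\Omega|\nabla u^{q/2}|^2+\Bigl(\mu-\tfrac{\chi\alpha(q-1)}{q}\Bigr)\int_\Omega u^{q+1}\le \int_{\partial\Omega}u^{q+p-1}-\tfrac{\chi(q-1)}{q}\int_\Omega u^q v_t+C\int_\Omega u^q,
\]
where $C$ absorbs $a\int u^q$ and $-\tfrac{\chi\beta(q-1)}{q}\int u^q v$. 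Choosing $\mu$ large enough to keep $\mu-\tfrac{\chi\alpha(q-1)}{q}$ positive throughout the relevant range of $q$ is the first place where the hypothesis $\mu>\mu_0$ enters, and provides the crucial positive coefficient in front of $\int u^{q+1}$ on the left into which other superquadratic terms can be absorbed.

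Two obstacles must then be overcome. The coupling through $v_t$ I would control by pairing the inequality above with an auxiliary $L^{2r}$-estimate for $\nabla v$ obtained by testing the $v$-equation with $-\Delta v$ (or with $|\nabla v|^{2r-2}(-\Delta v)$ for larger $r$); convexity of $\Omega$ keeps the boundary contribution $\int_{\partial\Omega}|\nabla v|^{2r-2}\partial_\nu|\nabla v|^2$ nonpositive, producing a bound of the form $\int|\nabla v|^{2r}\le C(1+\int u^{2r})$ that can be fed back and combined, via Young's inequality, to absorb the $v_t$-integral. The second and genuinely harder obstacle is the boundary term $\int_{\partial\Omega}u^{q+p-1}$: applying the Sobolev trace inequality $H^1(\Omega)\hookrightarrow L^4(\partial\Omega)$ available in $n=3$ together with a Gagliardo-Nirenberg interpolation applied to $w=u^{q/2}$, one should be able to reach an estimate of the form
\[
\int_{\partial\Omega}u^{q+p-1}\le \varepsilon\int_\Omega|\nabla u^{q/2}|^2+\varepsilon\int_\Omega u^{q+1}+C(\varepsilon),
\]
but a careful accounting of the interpolation exponents shows that this absorption works uniformly in the iteration parameter $q$ only when $p<7/5$. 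This threshold, strictly smaller than the parabolic-elliptic value $3/2$ of Theorem \ref{p-e-e}, is the price paid for working with a parabolic $v$-equation in three dimensions and is the step I expect to be the main obstacle.

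Once the $L^{q_0}$-bound is in place for some $q_0>3/2$, heat-semigroup smoothing applied to the second equation produces bounds on $\nabla v$ in suitable $L^s(\Omega)$, after which the same circle of estimates can be iterated along $q_k=2q_{k-1}$. Keeping careful track of how the constants depend on $q$, a Moser-Alikakos type iteration in the spirit of \cite{Alikakos1} yields $\sup_{t<T_{\rm max}}\|u(\cdot,t)\|_{L^\infty(\Omega)}<\infty$, which by the extensibility criterion forces $T_{\rm max}=\infty$ and proves the theorem.
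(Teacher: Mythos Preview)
Your substitution $\Delta v = v_t - \alpha u + \beta v$ is the parabolic--elliptic trick, and it does not transplant to $\tau=1$: it merely trades the cross term $\int_\Omega u^q\Delta v$ for $\int_\Omega u^q v_t$, which is no easier. The proposed cure---an $L^{2r}$ bound on $\nabla v$ obtained from testing the $v$-equation with $-\Delta v$ or $|\nabla v|^{2r-2}(-\Delta v)$---controls $\nabla v$ and $\Delta v$, not $v_t$, and you give no mechanism by which a bound of the form $\int|\nabla v|^{2r}\le C(1+\int u^{2r})$ absorbs $\int u^q v_t$. The paper never introduces $v_t$ at all. It keeps the chemotaxis term as $\chi\int u\nabla u\cdot\nabla v$ and estimates it by $\varepsilon\int|\nabla u|^2+C\int u^2|\nabla v|^2$; the dangerous mixed integral $\int u^2|\nabla v|^2$ is then killed by adding a cross term $\tfrac{1}{3}\int_\Omega u|\nabla v|^2$ to the energy functional $\psi(t)=\int u^2+\int|\nabla v|^4+\tfrac{1}{3}\int u|\nabla v|^2$. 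Differentiating that cross term feeds $u_t$ into $|\nabla v|^2$ and produces $-\tfrac{\mu}{3}\int u^2|\nabla v|^2$ with the good sign; this is precisely where the largeness of $\mu$ is spent.

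Because of this, your identification of the threshold $p<\tfrac{7}{5}$ is off. The pure boundary term $\int_{\partial\Omega}u^{q+p-1}$ can be absorbed into $\varepsilon\int|\nabla u^{q/2}|^2+\varepsilon\int u^{q+1}+C(\varepsilon)$ for every $p<\tfrac{3}{2}$, in any dimension (this is Lemma~\ref{Boundary-est}). The restriction $p<\tfrac{7}{5}$ comes from a \emph{different} boundary integral, namely $\int_{\partial\Omega}u^{p}|\nabla v|^2$, which appears only because the cross functional $\int u|\nabla v|^2$ is being differentiated: integrating $\int|\nabla v|^2\Delta u$ by parts throws $\partial_\nu u=u^p$ onto $|\nabla v|^2$. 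Controlling that mixed boundary term (Lemma~\ref{Boundary-est-3}) requires, after trace embedding and Young, a bound on $\int|\nabla v|^{\frac{6-2p}{2-p}}$, and in $n=3$ this can be interpolated against $\int|\nabla|\nabla v|^2|^2$ and $\int|\nabla v|^2$ only when $\frac{3-p}{2-p}<\frac{8}{3}$, i.e.\ $p<\tfrac{7}{5}$. Once this yields $\sup_t\int u^2<\infty$, the paper's Moser iteration (Theorem~\ref{bdduthm}) finishes as you describe.
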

\begin{remark}
 Here we expect $p=\frac{7}{5}$ may not be the threshold of global boundedness and blow-up solutions, but rather the limitation of our analysis tools.
\end{remark}
\begin{remark}
 We leave the open question whether for every $n\geq 4$, there exists $p_n >1$ such that if $1<p<p_n$ solutions remain bounded in $\Omega \times (0, \infty)$.
\end{remark}
\begin{remark}
    In case $p=1$, one may adopt and modify the proof of Theorem \ref{p-e-e}, \ref{2dthm}, and \ref{3d} to obtain similar results.
\end{remark}
\indent The paper is organized as follows. The local well-possedness of solutions toward the system \eqref{1.1} including the short-time existence, positivity and uniqueness are established in Section \ref{local-wellposed}. In Section \ref{preliminaries}, we recall some basic inequalities and provide some essential estimates on the boundary of the solutions, which will be needed in the sequel sections. Section \ref{a priori estimates} is devoted to establishing the $L \log L$, and $L^r$ bounds for solutions. In Section \ref{globalbddn}, a Moser-type iteration scheme is applied to obtain an $L^\infty$ bound for solutions from an $L^r$ bound with a sufficiently large $r$. Finally, the main theorems are proved in Section \ref{proof of main theorems}.

\section{Local well-posedness} \label{local-wellposed}
In this section, we prove the short-time existence, uniqueness and positivity of solutions to the system \eqref{1.1}  under certain conditions of initial data. Although the proof just follows a basic fixed point argument, however we cannot find any suitable reference for our system. For the sake of completeness, here we will provide a proof, which is a modification of the proof of Theorem 1.1 in \cite{JVN}. First, we introduce some notations used throughout this section. We follow some definitions of Holder continuous spaces given in \cite{Lieberman}. 
For $k \in \mathbb{N}$, and $\gamma \in (0,1]$, we define the following norms and seminorms:
\begin{align*}
    [f]_{k+\gamma, \Omega_T } &:= \sum_{|\beta |+2j=k} \sup_{(x,t)\neq (y,s) \in \Omega_T} \frac{|D^{\beta}_x D^{j}_t\left ( f(x,t)-f(y,s) \right )|}{|x-y|^{\gamma}+|t-s|^{\frac{\gamma}{2}}}, \\
    \left \langle f \right \rangle _{k+\gamma, \Omega_T } &:= \sum_{|\beta|+2j=k-1} \sup_{(x,t)\neq (x,s) \in \Omega_T} \frac{|D^{\beta}_x D^{j}_t\left ( f(x,t)-f(x,s) \right )|}{|t-s|^{\frac{1+\gamma}{2}}},\\
    |f|_{k+\gamma, \Omega_T}&:= [f]_{k+\gamma, \Omega_T }+\left \langle f \right \rangle _{k+\gamma, \Omega_T },\\
    \left \| f \right \|_{C_{k+\gamma}( \Omega_T)} &:=  \sup_{(x,t)\in \Omega_T} |f(x,t)|+|f|_{k+\gamma, \Omega_T},
\end{align*}
where $\Omega_T := \Omega \times (0,T)$. We write $f \in C^{k+\gamma}(\Omega_T)$ if  $ \left \| f \right \|_{C_{k+\gamma}( \Omega_T)}< \infty$.
Now let us recall a useful result for the linear model. We consider the linear second order elliptic equation of non-divergence form:
\begin{equation} \label{local_eqn}
    Lu := u_t- a^{ij}D_{ij}u+ b^iD_iu+cu=f \text{ in } \Omega_T.
\end{equation}
Assume that there exists $\Lambda \geq \lambda >0$ such that 
\begin{equation} \label{local_1}
    \lambda |\xi|^2 \leq a^{ij}(x,t)\xi_i\xi_j \leq \Lambda |\xi|^2, \qquad (x,t) \in \Omega_T, \xi \in \mathbb{R}^n,
\end{equation}
where $a^{ij}, b^i, c \in C^{\gamma} (\bar{\Omega_T}) (0 <\gamma<1)$ and
\begin{equation} \label{local_2}
   \frac{1}{\lambda} \left \{ \sum _{i,j} \left \| a^{ij}  \right \|_{C^{\gamma}(\bar{\Omega}_T)}+\sum _{i}\left \| b^i \right \|_{C^{\gamma}(\bar{\Omega}_T)}+\left \| c \right \|_{C^{\gamma}(\bar{\Omega}_T)} \right \} \leq \Lambda_\gamma.
\end{equation}
\begin{theorem}[\cite{Lieberman}, p. 79, Theorem 4.31] \label{linear-local-existence-thm}
Let the assumptions \eqref{local_1}, \eqref{local_2} be in force, and  $\partial \Omega \in C^{2+\gamma} (0<\gamma<1).$ Let $f \in C^{\gamma}(\bar{\Omega_T})$, $g \in C^{1+\gamma}(\bar{\Omega}_T)$ and $u_0 \in C^{2+\gamma}(\bar{\Omega})$ satisfying the first order compatibility condition:
\begin{equation} \label{compare.1}
    \frac{\partial u_0}{\partial \nu  } =g(x,0) \text{ on }\partial \Omega.
\end{equation}
Then there exists a unique solution $u \in  C^{2+\gamma}(\bar{\Omega}_T)$ to the problem \eqref{local_eqn} with the Neumann boundary condition $\frac{\partial u}{\partial \nu } = g$ on $\partial \Omega \times (0,T)$. Moreover, there exists a constant $C$ independent of $g$ and $u_0$ such that
\begin{equation} \label{linear-est-local}
    \left \| u \right \|_{C^{2+\gamma}(\bar{\Omega}_T)} \leq C\left ( \frac{1}{\lambda}\left \| f \right \|_{C^{\gamma}(\bar{\Omega}_T)}+ \left \| g \right \|_{{C^{1+\gamma}(\bar{\Omega}_T)}} +\left \| u_0 \right \|_{C^{2+\gamma}(\bar{\Omega})} \right ).
\end{equation}
where $C$ is dependent only on $n, \gamma, \Lambda/ \lambda, \Lambda_\gamma$ and $\Omega$.
\end{theorem}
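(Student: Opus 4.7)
The plan is to prove the theorem by the classical method of continuity, reducing existence to the global Schauder-type a priori bound \eqref{linear-est-local} plus solvability of a single model problem, and obtaining uniqueness from the same estimate. First I would homogenize the data: using the first-order compatibility condition \eqref{compare.1} and a standard $C^{2+\gamma}$ extension, pick $w \in C^{2+\gamma}(\bar{\Omega}_T)$ with $w(\cdot, 0) = u_0$ in $\Omega$ and $\partial_\nu w = g$ on $\partial \Omega \times (0,T)$. Then $u$ solves the problem in the statement iff $\tilde u := u - w$ solves $L\tilde u = f - Lw \in C^{\gamma}(\bar{\Omega}_T)$ with $\tilde u(\cdot, 0) = 0$ and $\partial_\nu \tilde u = 0$, so one is reduced to the homogeneous initial/boundary problem.

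Next I would set up continuity in the coefficients. For $\theta \in [0,1]$ introduce $L_\theta u := u_t - [(1-\theta)\delta^{ij} + \theta a^{ij}] D_{ij}u + \theta(b^i D_i u + c u)$, so that $L_0$ is the heat operator, $L_1 = L$, and every $L_\theta$ satisfies \eqref{local_1}, \eqref{local_2} with the same constants $\lambda, \Lambda, \Lambda_\gamma$ (up to fixed factors). Let $\Sigma \subset [0,1]$ be the set of $\theta$ for which $L_\theta v = F$ with homogeneous Neumann and initial data is solvable in $C^{2+\gamma}(\bar{\Omega}_T)$ for every $F \in C^{\gamma}(\bar{\Omega}_T)$. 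Standard theory for the heat equation with homogeneous Neumann data on a $C^{2+\gamma}$ domain (via eigenfunction expansion or layer potentials) gives $0 \in \Sigma$. Assuming \eqref{linear-est-local} holds uniformly in $\theta$, a short functional-analytic argument (combining the uniform estimate with a small perturbation of the coefficients) shows that $\Sigma$ is both open and closed in $[0,1]$, so $\Sigma = [0,1]$ and $\theta = 1$ furnishes the desired solution.

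The technical heart is therefore the a priori estimate \eqref{linear-est-local}. I would derive it by the freezing-of-coefficients/patching argument. Cover $\bar{\Omega}_T$ by small parabolic cylinders; in interior cylinders apply the interior Schauder estimate for the constant-coefficient parabolic operator obtained by freezing $a^{ij}$ at the center, treating the variations of $a^{ij}, b^i, c$ as controlled perturbations thanks to \eqref{local_2}. In cylinders meeting $\partial \Omega$, flatten the boundary using a $C^{2+\gamma}$ diffeomorphism (which preserves the Neumann condition up to lower-order modifications that are absorbed into $b^i, c$) and reduce to the half-space problem for the constant-coefficient heat operator with homogeneous Neumann data. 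Summing the local estimates with a finite partition of unity and using Hölder-space interpolation to absorb $\|u\|_{C^{1+\gamma}}$ into $\|u\|_{C^{2+\gamma}} + \|u\|_{L^\infty}$, and finally the maximum principle to bound $\|u\|_{L^\infty}$ by the data, yields the global bound with the stated dependence on $n, \gamma, \Lambda/\lambda, \Lambda_\gamma, \Omega$. Uniqueness follows directly from \eqref{linear-est-local} applied to the difference of two solutions with vanishing data, or from the parabolic maximum principle applied to $e^{-Kt}\tilde u$ with $K > \|c\|_\infty$.

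The main obstacle is the boundary Schauder estimate for the half-space Neumann model. Unlike the Dirichlet case one cannot simply extend by odd reflection, so $D^2 u$ must be controlled up to the boundary by working with the explicit Poisson-type kernel for the half-space heat equation with Neumann data and executing careful pointwise Hölder estimates of the second derivatives, distinguishing tangential-tangential, tangential-normal, and normal-normal components; the last requires using the equation itself to trade $\partial_{tt}$ and normal derivatives. Once this model estimate is established, the rest of the program is routine.
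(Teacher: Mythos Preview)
The paper does not give its own proof of this statement: Theorem~\ref{linear-local-existence-thm} is quoted verbatim from Lieberman's monograph (Theorem~4.31, p.~79) and is used as a black box in the subsequent local well-posedness argument. Your outline---homogenize the data via a $C^{2+\gamma}$ extension using the compatibility condition, run the method of continuity between the heat operator and $L$, and supply the uniform Schauder estimate by freezing coefficients and flattening the boundary---is precisely the standard route followed in that reference, so there is nothing to contrast at the level of strategy. One small caveat: for the boundary half-space model with Neumann data you can in fact use \emph{even} reflection in the normal variable (after subtracting off the normal data), which reduces the Neumann model to the full-space problem and avoids the kernel analysis you flag as the main obstacle; this is how Lieberman handles the oblique-derivative case, and it simplifies the step you identified as hardest.
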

This estimate, together with Leray-Schauder fixed point argument is the main tools to prove the following theorem.  
\begin{theorem} \label{local-existence-theorem}
If nonnegative functions $u_0,v_0$  are in $C^{2+\gamma}(\bar{\Omega})$ such that \begin{equation}
    \frac{\partial u_0}{\partial \nu} = |u_0|^{1+\gamma} \qquad \text{ on } \partial \Omega,
\end{equation}
where $\gamma \in (0,1)$. Then there exists $T>0$ such that problem \eqref{1.1} admits 
a unique nonnegative solution $u,v$ in $C^{2+\gamma}(\bar{\Omega}_T)$. Moreover, if $u_0, v_0$ are not identically zero in $\Omega$ then $u,v$ are strictly positive in $\bar{\Omega_T}$. 
\end{theorem}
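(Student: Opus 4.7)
The plan is to construct a local solution via a Leray-Schauder fixed-point argument built on Theorem \ref{linear-local-existence-thm}, and then extract nonnegativity, strict positivity, and uniqueness from standard maximum-principle and energy arguments. First I would define an operator $\Phi$ on a closed ball in $C^{1+\gamma}(\bar{\Omega}_T)$ by the following two-step procedure. Given a test function $\tilde u$ in this ball, first solve the linear problem (elliptic if $\tau=0$, parabolic if $\tau=1$)
$$\tau v_t - \Delta v + \beta v = \alpha \tilde u \text{ in } \Omega_T, \qquad \frac{\partial v}{\partial \nu} = 0, \qquad v(\cdot,0)=v_0,$$
to produce $v \in C^{2+\gamma}(\bar{\Omega}_T)$. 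Then solve the linear parabolic problem
$$u_t - \Delta u = -\chi \nabla \cdot (\tilde u \nabla v) + a\tilde u - \mu \tilde u^2, \qquad \frac{\partial u}{\partial \nu} = |\tilde u|^{1+\gamma}, \qquad u(\cdot,0) = u_0,$$
viewing the right-hand side as a given $C^{\gamma}$ source and the boundary datum as a given $C^{1+\gamma}(\bar{\Omega}_T)$ function; the latter is legitimate because $s\mapsto |s|^{1+\gamma}$ is globally $C^1$ with $\gamma$-Hölder derivative, and the compatibility condition \eqref{compare.1} is exactly the hypothesis $\partial_\nu u_0 = |u_0|^{1+\gamma}$. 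Theorem \ref{linear-local-existence-thm} then produces $u := \Phi(\tilde u) \in C^{2+\gamma}(\bar{\Omega}_T)$ together with the quantitative bound \eqref{linear-est-local}.

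Next I would verify the hypotheses of the Leray-Schauder theorem on a suitable ball $B_R \subset C^{1+\gamma}(\bar{\Omega}_T)$: choosing $T$ small enough forces $\Phi$ to be a self-map on $B_R$, continuity follows from linearity of the auxiliary problems together with the local Lipschitz property of $s \mapsto |s|^{1+\gamma}$ on bounded sets, and compactness is obtained from the compact embedding $C^{2+\gamma}(\bar{\Omega}_T) \hookrightarrow C^{1+\gamma}(\bar{\Omega}_T)$ (with a minor exponent shift if needed). The resulting fixed point $u$ and its associated $v$ form the sought classical solution in $C^{2+\gamma}(\bar{\Omega}_T)$.

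For nonnegativity of $u$, I would rewrite the $u$-equation in the form $u_t - \Delta u + \chi \nabla v \cdot \nabla u + (\chi \Delta v - a + \mu u) u = 0$ and, after the exponential rescaling $w = e^{-Kt} u$ for $K$ large enough, apply the weak maximum principle plus Hopf's lemma at a putative boundary minimum, which is blocked because $\partial_\nu w = e^{-Kt} |u|^{1+\gamma} \geq 0$; an analogous, easier argument gives $v \geq 0$. Strict positivity when $u_0, v_0 \not\equiv 0$ then follows from the strong maximum principle applied to the same linear equations. Uniqueness comes from testing the difference of two solutions in $L^2$, controlling the boundary contribution by the local Lipschitz estimate $\big||a|^{1+\gamma}-|b|^{1+\gamma}\big|\leq C|a-b|$ on bounded sets together with the trace inequality, and closing with Gronwall on a short interval. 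The main obstacle throughout is managing the nonlinear boundary datum in the Hölder framework: one must verify that $|\tilde u|^{1+\gamma}$ lies in $C^{1+\gamma}(\bar{\Omega}_T)$ with norm controlled by that of $\tilde u$, and that the first-order compatibility at $t=0$ is preserved; this is precisely why the statement fixes $p=1+\gamma$ and imposes the matching compatibility on $u_0$.
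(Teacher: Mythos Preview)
Your proposal is correct and follows essentially the same route as the paper: a Leray--Schauder fixed-point argument in $C^{1+\gamma}(\bar{\Omega}_T)$ built on Theorem~\ref{linear-local-existence-thm}, with compactness via $C^{2+\gamma}\hookrightarrow C^{1+\gamma}$, self-mapping for small $T$, strict positivity by the strong maximum principle plus Hopf, and uniqueness by an $L^2$ energy estimate on the difference closed with Gronwall. The only notable deviation is in the nonnegativity step: the paper uses the truncation method (testing the $u$-equation with $\phi=\min\{u,0\}$ and applying Gronwall to $\tfrac12\int_\Omega\phi^2$), whereas you propose an exponential rescaling $w=e^{-Kt}u$ together with the weak maximum principle and Hopf at the boundary; both arguments are standard and equally effective here.
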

\begin{remark}
The convexity assumption of domain $\Omega$ is not necessary in this theorem.
\end{remark}
\begin{remark}
    By substituting $\gamma =p-1$ into Theorem \ref{local-existence-theorem}, we obtain local existence and uniqueness of positive solutions in Theorem \ref{p-e-e}, \ref{2dthm}, and \ref{3d}.  
\end{remark}
\begin{proof}
From now to the end of this proof, we will use $C$  as a universal notation for constants different from time to time. Firstly, the short-time existence of classical solution will be proved by a fixed point argument. Let $u \in C^{1+\gamma}(\bar{\Omega}_T)$ be such that $u(x,0)=u_0(x)$ in $\Omega$. Then, the functions $u_0$ and $g(x,t)= |u(x,t)|^{1+\gamma}$ satisfy condition \eqref{compare.1}, and $g \in  C^{1+\gamma}(\bar{\Omega}_T)$. We assume $T<1$, and consider the set of functions given by
\[
B_T(R):= \left \{ u \in C^{1+\gamma}(\bar{\Omega}_T) \text{ such that }   \left \| u \right \|_{ C^{1+\gamma}(\bar{\Omega}_T)} \leq R \right \}.
\]
Now we define the map 
\[
A: B_T(R) \longrightarrow  C^{1+\gamma}(\bar{\Omega}_T)
\]
where $Au:= U$ is a solution of 
\begin{equation} \label{local-existence-equation}
    \begin{cases} 
U_{t}=  \Delta U - \chi \nabla \cdot (u \nabla V) +au-\mu u^2  \qquad &x\in {\Omega},\, t \in (0,T_{\rm max }), \\ 
 \tau V_t  = \Delta V+ \alpha u - \beta V \qquad &x\in {\Omega},\, t \in (0,T_{\rm max }),
\end{cases} 
\end{equation}
under Neumann boundary condition: 
\begin{equation}
    \frac{\partial U}{\partial \nu } = |u|^{1+\gamma}, \qquad \frac{\partial V}{\partial \nu }=0, \qquad x\in \partial \Omega,\, t \in (0, T_{\rm max}),
\end{equation}
and initial data $\left ( U(x,0),V(x,0) \right )= \left (u_0(x),v_0(x) \right ) $ in $\Omega$. We first prove that $A$ sends bounded sets into relative compact sets of $C^{1+\gamma}(\bar{\Omega}_T)$. Indeed, the inequality \eqref{linear-est-local} implies there exists $R'>0$ independent of $T$ such that $\left \| Au \right \|_{C^{2+\gamma}(\bar{\Omega}_T)} \leq R'$ for all $u$ in $B_T(R)$. As bounded sets in $C^{2+\gamma}(\bar{\Omega}_T)$ are relatively compact in $C^{1+\gamma}(\bar{\Omega}_T)$. We claim that $A$ is continuous. In fact, let $u_n \to u$ in $C^{1+\gamma}(\bar{\Omega}_T)$, we need to prove $U_n:=Au_n \to  U:=Au$ in $C^{1+\gamma}(\bar{\Omega}_T)$. Now we can see that $U_n-U$ satisfies
\begin{equation} \label{local-existence-equation-1}
    \begin{cases} 
(U_n-U)_{t}=  \Delta (U_n-U)+ f_n,\, \qquad &x\in {\Omega}, \, t \in (0,T_{\rm max }), \\ 
 \tau (V_n-V)_t  = \Delta (V_n-V)+\alpha (u_n-u) - \beta (V_n-V) \qquad &x\in {\Omega},\, t \in (0,T_{\rm max }),
\end{cases} 
\end{equation}
where $f_n :=- \chi \nabla \cdot (u_n \nabla V_n- u \nabla V) +u_n(a-\mu u_n)-u(a-\mu u)  $. One can verify that $f_n$ satisfies the assumptions of Theorem \ref{linear-local-existence-thm}. Plus, the boundary condition
\begin{equation}
    \frac{\partial (U_n-U)}{\partial \nu } = |u_n|^{1+\gamma}-|u|^{1+\gamma}, \qquad \frac{\partial (V_n-V)}{\partial \nu }=0, \qquad x\in \partial \Omega,\, t \in (0, T_{\rm max}).
\end{equation}
We claim that $V_n \to V$ in $C^{2+\gamma}(\bar{\Omega_T} )$ for $\tau \geq 0$. Indeed, when $\tau >0$, we make use of \eqref{linear-est-local} and when $\tau =0$, we apply Schauder type estimate for elliptic equation to obtain that  $V_n \to V$ in $C^{2+\gamma}(\bar{\Omega_T} )$.  This leads to $f_n \to 0$ in $C^{\gamma}(\bar{\Omega}_T)$, combine with inequality \eqref{linear-est-local} entail that $U_n \to U$ in $C^{2+\gamma}(\bar{\Omega}_T)$. In order to apply the Leray-Schauder fixed point theorem we just have to prove that if $T$ is sufficiently small, and $R \geq 2(1+d(\Omega)^{1-\gamma}) \left \| u_0 \right \|_{C^{2+\gamma}(\bar{\Omega})}$, then $A(B_T(R)) \subset B_T(R)$. Indeed, 
\[
|Au(x,t)|\leq |Au(x,0)|+t\left\| D_t Au   \right\|_{C^{0}(\bar{\Omega})} \leq \left \| u_0 \right \|_{C^{0}(\bar{\Omega})}+TR'
\]
\[
\frac{|Au(x,t)-Au(x,s)|}{|t-s|^{\frac{1+\gamma}{2}}} \leq \left \| D_tAu \right \|_{C^0(\bar{\Omega}_T)}|t-s|^{\frac{1-\gamma}{2}}\leq R'T^{\frac{1-\gamma}{2}},
\]
and, 
\begin{align*}
    \frac{|D_xAu(x,t)-D_xAu(y,s)|}{|x-y|^\gamma+|t-s|^{\frac{\gamma}{2}}} &\leq |t-s|^{1-\frac{\gamma}{2}}\left \| D^2_xAu \right \|_{C^0(\bar{\Omega}_T)}+|x-y|^{1-\gamma}(s^{\frac{\gamma}{2}}+ \left \| D^2u_0 \right \|_{C^0(\bar{\Omega})} )\\
    &\leq T^{1-\frac{\gamma}{2}}R'+d(\Omega)^{1-\gamma} T^{\frac{\gamma}{2}}R'+d(\Omega)^{1-\gamma} \left \| D^2u_0 \right \|_{C^0(\bar{\Omega})}.
\end{align*}
These above estimates imply that
\[
\left \| Au \right \|_{C^{1+\gamma}(\bar{\Omega}_T)} \leq \frac{R}{2}+ R'T+R'T^{\frac{1-\gamma}{2}}+T^{1-\frac{\gamma}{2}}R'+d(\Omega)^{1-\gamma} T^{\frac{\gamma}{2}}R'.
\]
Since $R'$ is independent of $T$ for all $T<1$, we can choose $T$ sufficiently small as to have
\[
R'T+R'T^{\frac{1-\gamma}{2}}+T^{1-\frac{\gamma}{2}}R'+d(\Omega)^{1-\gamma} T^{\frac{\gamma}{2}}R' \leq \frac{R}{2}.
\]
This further implies that
\[
\left \| Au \right \|_{C^{1+\gamma}(\bar{\Omega}_T)} \leq R \text{ for all } u \in B_T(R).
\]
Thus $A$ has a fixed point in $B_T(R)$. Now if $u$ is a fixed point of $A$, $u \in C^{2+\gamma}(\bar{\Omega}_T) $ and it is a solution of \eqref{1.1}.\\
Secondly, the nonnegativity of solutions will be proved by the truncation method: Letting 
\[
\phi:= \min \left \{ u,0 \right \}
\]
and $\psi(t):= \frac{1}{2}\int_{\Omega} \phi^2 \, dx$, we see that $\psi$ is continuously differentiable with the derivative
\begin{align} \label{positive-1}
    \psi'(t)&= - \int_\Omega |\nabla \phi |^2 +a\int_\Omega \phi^2 +\int_{\partial \Omega} \phi |u|^{1+\gamma} \, dS +\chi \int_\Omega \phi \nabla \phi \cdot \nabla v  - \mu \int_{\Omega } \phi^3 \notag \\
    &\leq - \int_\Omega |\nabla \phi |^2 +a\int_\Omega \phi^2  +\chi \int_\Omega \phi \nabla \phi \cdot \nabla v  - \mu \int_{\Omega } \phi^3.
\end{align}
We make use of Young's inequality combined with the global boundedness of $|\nabla v|$ in $\bar{\Omega}_T$ to obtain
\begin{align}\label{positive-2}
    \chi \int_\Omega \phi \nabla \phi \cdot \nabla v  \leq \epsilon \int_\Omega |\nabla \phi|^2 +C\int_\Omega \phi^2,
\end{align}
for some $C>0$. We also have $- \mu \int_{\Omega } \phi^3 \leq C \int_{\Omega } \phi^2$, where $C=\mu \sup_{\Omega_T} |u(x,t)| $. This together with \eqref{positive-1}, \eqref{positive-2} implies that $\psi'(t) \leq  C \psi(t) $ for all $0<t<T$. By Gronwall's inequality and the initial condition $\psi(0)=0$, we imply that $\psi \equiv 0$ or $u \geq 0$. \\
Thirdly, we will prove that if $u_0 \not\equiv  0$ then $u$ is strictly positive in $\bar{\Omega_T}$ by a contradiction proof. Suppose that there exists $(x_0,t_0) \in \bar{\Omega}_T$ such that $\min_{\bar{\Omega}_T} u(x,t) = u(x_0,t_0) =0 $. By the strong parabolic maximum principle, we obtain $(x_0,t_0) \in \partial \Omega \times (0,T)$. However, it is a contradiction due to Hopf's lemma: 
 \[
 0 > \frac{ \partial u}{\partial \nu }(x_0,t_0) = |u(x_0,t_0)|^{1+\gamma} =0.
 \]
 Thus, $u>0$ and by similar arguments we also have $v>0$.\\
Finally, the uniqueness of classical solutions will be proved by a contradiction proof. Assuming $(u_1,v_1$ and $(u_2,v_2)$ are two positive classical solutions of the system \eqref{1.1}. Let $U:= u_1-u_2$, $V:= v_1-v_2$, then $(U,V)$ is a solution of the following system:
 \begin{equation} \label{local-existence-equation-2}
    \begin{cases} 
U_{t}=  \Delta U+ F,\, \qquad &x\in {\Omega}, \, t \in (0,T_{\rm max }), \\ 
 \tau V_t  = \Delta V+\gamma U - \beta V \qquad &x\in {\Omega},\, t \in (0,T_{\rm max }),
\end{cases} 
\end{equation}
where $F:= -\chi \nabla (u_1\nabla v_1-u_2\nabla v_2)+f(u_1)-f(u_2)$, and the boundary condition
\begin{equation}
    \frac{\partial U}{\partial \nu } = |u_1|^{1+\gamma}-|u_2|^{1+\gamma}, \qquad \frac{\partial V}{\partial \nu } = 0, \qquad x\in \partial \Omega,\, t \in (0, T_{\rm max}).
\end{equation}
By mean value theorem, there exists $z(x,t)$ between $u_1(x,t)$ and $u_2(x,t)$ such that \[
u_1(x,t)-u_2(x,t) =(u_1(x,t)-u_2(x,t))f'(z(x,t)).\] Multiplying the first equation of \eqref{local-existence-equation-2} by $U$ implies
\begin{align}\label{lon-1}
   \frac{1}{2} \frac{d}{dt}\int_\Omega U^2 \, dx &= -\int_\Omega |\nabla U|^2+ \int_{\partial \Omega} U(|u_1|^{1+\gamma}-|u_2|^{1+\gamma})\, dS \notag \\
    &+ \chi \int_\Omega (u_1\nabla v_1-u_2\nabla v_2)\cdot \nabla U + \int_\Omega U^2 f'(z).
\end{align}
We make use of the global boundedness property of $u_1,u_2$ in $\bar{\Omega}_T$, thereafter apply Sobolev's trace theorem, and finally Young's inequality to have 
\begin{align}\label{lon-2}
    \int_{ \partial \Omega } U(|u_1|^{1+\gamma}-|u_2|^{1+\gamma})\, dS \leq C\int_{\partial \Omega} U^2\, dS \leq \epsilon \int_{\Omega} |\nabla U|^2 +C(\epsilon)\int_\Omega U^2.
\end{align}
Since, 
\begin{align*}
    u_1\nabla v_1-u_2\nabla v_2 =U\nabla v_1 +u_2 \nabla V,
\end{align*}
we have 
\begin{align}\label{lon-3}
    \chi \int_\Omega \nabla U \cdot (u_1\nabla v_1-u_2\nabla v_2) &\leq C\int_\Omega U|\nabla U|+|\nabla U||\nabla V| \notag \\
    &\leq \epsilon \int_\Omega |\nabla U|^2 +C\int_\Omega U^2 +C\int_{\Omega} |\nabla V|^2.
\end{align}
We also have $\int_\Omega U^2 f'(z) \leq C\int_\Omega U^2$ where $C=\sup_{\min{ \{u_1,u_2\}} \leq z \leq \max{ \{u_1,u_2\}}} |f'(z)|$. Multiplying the second equation of \eqref{local-existence-equation-2} by $V$, and applying Young's inequality,  we obtain
\begin{align}\label{lon-5}
    \frac{d}{dt} \int_\Omega V^2 +\int_\Omega |\nabla V|^2 \leq C\int_\Omega U^2.
\end{align}
From \eqref{lon-1} to \eqref{lon-5}, we obtain
\begin{align}
     \frac{d}{dt}\left \{ \int_\Omega U^2+\int_\Omega V^2 \right \} \leq C\left \{ \int_\Omega U^2+\int_\Omega V^2 \right \}.
\end{align}
The initial conditions and Gronwall's inequality imply that $U\equiv V \equiv 0$, and thus there is a unique solution to the system \eqref{1.1}.
\end{proof}

\section{Preliminaries}\label{preliminaries}
We collect some useful tools that will frequently be used in the sequel. Let us begin with an extended version of the Gagliardo-Nirenberg interpolation inequality, which was established in \cite{Li+Lankeit}. 
\begin{lemma} \label{GN}
Let $\Omega$ be a  bounded and smooth domain of $\mathbb{R}^n$ with $n \geq 1$. Let $r\geq 1$, $0< q\leq p < \infty$, $s\geq 1$. Then there exists a constant $C_{GN}>0$ such that 
\begin{equation*}
    \left \| f \right \|^p_{L^p(\Omega)}\leq C_{GN}\left ( \left \| \nabla f \right \|_{L^r(\Omega)}^{pa}\left \| f \right \|^{p(1-a)}_{L^q(\Omega)} +\left \| f \right \|^p_{L^s(\Omega)}
 \right )
\end{equation*}
for all $f \in L^q(\Omega)$ with $\nabla f \in (L^r(\Omega))^n$, and $a= \frac{\frac{1}{q}-\frac{1}{p}}{\frac{1}{q}+\frac{1}{n}-\frac{1}{r}} \in [0,1]$.
\end{lemma}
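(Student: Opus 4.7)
The plan is to derive the inequality from the classical Gagliardo--Nirenberg interpolation inequality on bounded domains (valid for $q\ge 1$) combined with a Lebesgue interpolation trick that extends coverage to $0<q<1$. The mechanism, following \cite{Li+Lankeit}, is to reduce to an intermediate exponent $\tilde q\ge 1$, apply the classical inequality, interpolate back down to $L^q$, and absorb the resulting $\|f\|_{L^p}$ factor with Young's inequality.

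When $q\ge 1$ the conclusion is the standard bounded-domain Gagliardo--Nirenberg inequality, obtained from the $\mathbb R^n$ version via an extension operator and paying the additive lower-order term $\|f\|_{L^s}^p$. Assume now $0<q<1$ and (focusing on the main case) $p\ge 1$. Pick $\tilde q\in[1,p]$ and apply the classical inequality with $\tilde q$ in place of $q$:
\begin{equation*}
\|f\|^p_{L^p(\Omega)}\le C\bigl(\|\nabla f\|_{L^r(\Omega)}^{p\tilde a}\,\|f\|_{L^{\tilde q}(\Omega)}^{p(1-\tilde a)}+\|f\|_{L^s(\Omega)}^p\bigr),\qquad \tilde a=\frac{1/\tilde q-1/p}{1/\tilde q+1/n-1/r}.
\end{equation*}
Then invoke the Lebesgue interpolation inequality
\begin{equation*}
\|f\|_{L^{\tilde q}(\Omega)}\le \|f\|_{L^p(\Omega)}^{\theta}\,\|f\|_{L^q(\Omega)}^{1-\theta},\qquad \tfrac{1}{\tilde q}=\tfrac{\theta}{p}+\tfrac{1-\theta}{q},
\end{equation*}
which follows from H\"older's inequality and is valid for $0<q\le \tilde q\le p$ (the dual exponents $p/(\tilde q\theta)$ and $q/(\tilde q(1-\theta))$ both being $\ge 1$). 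Substituting and applying Young's inequality to the resulting factor $\|f\|_{L^p(\Omega)}^{p(1-\tilde a)\theta}$ --- admissible since $(1-\tilde a)\theta<1$ under the hypothesis $a\in[0,1]$ --- absorbs that factor into the left-hand side and yields a bound of the desired shape.

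The main technical obstacle is the end-of-proof bookkeeping: after absorption, one must verify that the resulting exponents coincide exactly with those claimed, i.e.\ that
\begin{equation*}
\frac{\tilde a}{1-(1-\tilde a)\theta}=a\qquad\text{and}\qquad \frac{(1-\tilde a)(1-\theta)}{1-(1-\tilde a)\theta}=1-a,
\end{equation*}
with $a=\frac{1/q-1/p}{1/q+1/n-1/r}$. This is a direct but tedious algebraic manipulation using the defining relations for $\tilde a$ and $\theta$, and it ultimately reflects the dilation invariance of the Gagliardo--Nirenberg scaling exponent, which must be independent of the intermediate choice $\tilde q$.
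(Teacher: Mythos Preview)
The paper does not give its own proof of this lemma; it is stated as a known result and attributed to \cite{Li+Lankeit}. Your proposal reproduces exactly the argument from that reference---classical Gagliardo--Nirenberg for $q\ge 1$, and for $0<q<1$ the intermediate-exponent trick (apply the classical inequality with some $\tilde q\in[1,p)$, interpolate $L^{\tilde q}$ between $L^q$ and $L^p$, then absorb via Young)---so there is nothing to compare: your route \emph{is} the cited one, and the exponent bookkeeping you flag does reduce to the identity $\tilde a/\bigl(1-(1-\tilde a)\theta\bigr)=a$, which checks out algebraically.
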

Consequently, the next lemma is derived as follows:
\begin{lemma} \label{GNY}
If $\Omega$ be a  bounded and smooth domain of $\mathbb{R}^n$ with $n \geq 1$, and $f \in W^{1,2}(\Omega)$ then there exists a positive constant $C$ depending only on $\Omega$ such that the following inequality
\begin{align} \label{GNY1}
    \int_\Omega f^2 \leq C\eta \int_\Omega |\nabla f|^2 + \frac{C}{\eta^{\frac{n}{2}} } \left (   \int_\Omega |f| \right )^2
\end{align}
holds for all $\eta \in (0,1)$.
\end{lemma}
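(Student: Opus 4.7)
The plan is to derive Lemma \ref{GNY} as a direct consequence of Lemma \ref{GN} together with a weighted Young's inequality; no nonstandard ingredient is needed. First I would apply Lemma \ref{GN} with the parameter choice $p=2$, $q=1$, $r=2$, $s=1$. A short computation gives the interpolation exponent
\[
a \;=\; \frac{\frac{1}{1}-\frac{1}{2}}{\frac{1}{1}+\frac{1}{n}-\frac{1}{2}} \;=\; \frac{n}{n+2},
\]
so $pa=\frac{2n}{n+2}$ and $p(1-a)=\frac{4}{n+2}$. Lemma \ref{GN} then yields
\[
\int_\Omega f^2 \;\leq\; C_{GN}\Bigl( \|\nabla f\|_{L^2(\Omega)}^{\frac{2n}{n+2}} \|f\|_{L^1(\Omega)}^{\frac{4}{n+2}} + \|f\|_{L^1(\Omega)}^2 \Bigr).
\]

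Next I would apply a weighted Young's inequality to the first term on the right. With conjugate exponents $\frac{n+2}{n}$ and $\frac{n+2}{2}$, the factor $\|\nabla f\|_{L^2}^{\frac{2n}{n+2}}$ raises to $\|\nabla f\|_{L^2}^2$ and the factor $\|f\|_{L^1}^{\frac{4}{n+2}}$ raises to $\|f\|_{L^1}^2$. Inserting the parameter $\eta\in(0,1)$ as a weight produces an estimate of the form
\[
\|\nabla f\|_{L^2}^{\frac{2n}{n+2}}\|f\|_{L^1}^{\frac{4}{n+2}} \;\leq\; \frac{n}{n+2}\,\eta\,\|\nabla f\|_{L^2}^2 \;+\; \frac{2}{n+2}\,\eta^{-\frac{n}{2}}\,\|f\|_{L^1}^2,
\]
where the exponent $-n/2$ of $\eta$ comes out of the ratio of the two Young exponents.

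Finally, since $\eta\in(0,1)$ we have $1\leq \eta^{-n/2}$, so the leftover additive term $\|f\|_{L^1(\Omega)}^2$ coming from Lemma \ref{GN} is absorbed into the weighted $\|f\|_{L^1}^2$ contribution. Combining everything gives
\[
\int_\Omega f^2 \;\leq\; C\eta\int_\Omega|\nabla f|^2 \;+\; \frac{C}{\eta^{n/2}}\Bigl(\int_\Omega|f|\Bigr)^2
\]
for a constant $C$ depending only on $n$, $C_{GN}$ and $\Omega$, as claimed. There is essentially no obstacle here: the only thing to check carefully is the arithmetic of the Young exponents so that the power of $\eta$ works out exactly to $-n/2$, which matches the scaling $f(x)\mapsto f(\lambda x)$ on $\mathbb{R}^n$ and is the reason this lemma is the right form for later energy estimates.
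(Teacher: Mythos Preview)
Your argument is correct and follows exactly the approach indicated in the paper: apply Lemma \ref{GN} with $p=r=2$, $q=s=1$, then use Young's inequality with conjugate exponents $\frac{n+2}{n}$ and $\frac{n+2}{2}$, and finally absorb the extra $\|f\|_{L^1}^2$ term using $\eta<1$. The paper's own proof is only a one-line sketch, and your write-up fills in precisely the details (the value of $a$, the Young exponents, and the resulting power $\eta^{-n/2}$) that the paper leaves implicit.
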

\begin{proof}
The Lemma follows from Lemma \ref{GN} by choosing $p=r=2$ and $q=s=1$ and Young's inequality.
\end{proof}
The following lemma gives estimates on solutions of the parabolic equations. For more details, see Lemma $2.1$ in \cite{Freitag-2018}.
\begin{lemma} \label{Para-Reg}
Let $p\geq 1$ and $q \geq 1$ satisfy 
\begin{equation*}
    \begin{cases}
     q &< \frac{np}{n-p},  \qquad \text{when } p<n,\\
     q &< \infty, \qquad \text{when } p=n,\\
      q &= \infty, \qquad \text{when } p>n.\\
     \end{cases}
\end{equation*}
Assuming $g_0 \in W^{1,q}(\Omega)$ and $g$ is a classical solution to the following system
\begin{equation}
    \begin{cases}
     g_t = \Delta g  - \zeta_1 g +\zeta_2 f &\text{in } \Omega \times (0,T), \\ 
\frac{\partial g}{\partial \nu} =  0 & \text{on }\partial \Omega \times (0,T),\\ 
 g(\cdot,0)=g_0   & \text{in } \Omega
    \end{cases}
\end{equation}
for some $T\in (0,\infty]$. If $f \in L^\infty \left ( (0,T);L^p(\Omega) \right ) $, then $g  \in L^\infty \left ( (0,T);W^{1,q}(\Omega) \right )$.
\end{lemma}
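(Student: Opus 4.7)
The natural approach is to represent $g$ through Duhamel's formula using the Neumann heat semigroup $(e^{t\Delta})_{t \geq 0}$ on $\Omega$, and then deploy its well-known $L^p$–$L^q$ smoothing estimates. Treating $-\zeta_1 g$ as a zeroth-order perturbation, I would begin by writing
\begin{equation*}
g(t) = e^{-\zeta_1 t}\, e^{t\Delta} g_0 + \zeta_2 \int_0^t e^{-\zeta_1(t-s)}\, e^{(t-s)\Delta} f(s)\, ds,
\end{equation*}
which holds classically because of the assumed regularity of $g$. Since $f \in L^\infty((0,T); L^p(\Omega))$, the task is to control $\|\nabla g(t)\|_{L^q(\Omega)}$ uniformly in $t$.

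Next I would take the spatial gradient of both sides, pass to $L^q(\Omega)$, and invoke two standard ingredients. The first is the preservation of $W^{1,q}$-regularity under the Neumann heat semigroup, which gives $\|\nabla e^{t\Delta} g_0\|_{L^q(\Omega)} \leq C\|\nabla g_0\|_{L^q(\Omega)}$ uniformly in $t$. The second is the classical gradient smoothing estimate
\begin{equation*}
\|\nabla e^{\tau \Delta} h\|_{L^q(\Omega)} \leq C\bigl(1 + \tau^{-\frac{1}{2} - \frac{n}{2}(\frac{1}{p} - \frac{1}{q})}\bigr)\|h\|_{L^p(\Omega)},
\end{equation*}
valid for all $1 \leq p \leq q \leq \infty$. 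Applying this pointwise in $s$ inside the Duhamel integral yields
\begin{equation*}
\|\nabla g(t)\|_{L^q(\Omega)} \leq C\|\nabla g_0\|_{L^q(\Omega)} + C|\zeta_2|\,\|f\|_{L^\infty((0,T);L^p(\Omega))} \int_0^t \bigl(1 + (t-s)^{-\alpha}\bigr)\, ds,
\end{equation*}
with $\alpha := \tfrac{1}{2} + \tfrac{n}{2}(\tfrac{1}{p}-\tfrac{1}{q})$.

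The three cases on $q$ are tailored so that $\alpha < 1$: indeed $q < \frac{np}{n-p}$ for $p<n$ is equivalent to $\frac{1}{p}-\frac{1}{q} < \frac{1}{n}$, which gives $\alpha < 1$, and the cases $p=n$, $p>n$ are handled analogously (in the last case one can bound $\|\nabla e^{\tau\Delta}h\|_{L^\infty}$ directly since $W^{1,p}\hookrightarrow L^\infty$ for $p>n$). Hence $\int_0^t (t-s)^{-\alpha}\,ds$ is bounded independently of $t \in (0,T)$, and we obtain the desired uniform bound for $g$ in $L^\infty((0,T); W^{1,q}(\Omega))$. The main (mild) obstacle would be justifying the semigroup smoothing estimate on a convex smooth domain under Neumann boundary conditions, but this is a standard result (see Winkler's framework); once granted, the argument is purely an integrability check on $\alpha$.
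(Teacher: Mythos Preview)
The paper does not actually prove this lemma; it merely cites Lemma~2.1 of \cite{Freitag-2018}. Your semigroup/Duhamel approach is exactly the standard route used in that reference and in Winkler's framework, so in spirit you are doing the right thing.

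However, there is a genuine gap in your time-integral bound. You claim that $\int_0^t (t-s)^{-\alpha}\,ds$ is bounded independently of $t\in(0,T)$, but this integral equals $\tfrac{t^{1-\alpha}}{1-\alpha}$, which blows up as $t\to\infty$; the lemma explicitly allows $T=\infty$, so your argument as written only covers finite $T$. You wrote the Duhamel formula with the factor $e^{-\zeta_1(t-s)}$ and then silently dropped it; even if you had kept it, the lemma does not assume $\zeta_1>0$, so that alone would not close the gap. The correct fix is to use the sharper form of the Neumann heat-semigroup gradient estimate,
\[
\|\nabla e^{\tau\Delta}h\|_{L^q(\Omega)} \;\le\; C\bigl(1+\tau^{-\alpha}\bigr)e^{-\lambda_1\tau}\|h\|_{L^p(\Omega)},
\]
where $\lambda_1>0$ is the first nonzero Neumann eigenvalue; the exponential decay is available precisely because the gradient annihilates the constant (mean) component of $e^{\tau\Delta}h$. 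With this decay factor, $\int_0^\infty (1+\sigma^{-\alpha})e^{-\lambda_1\sigma}\,d\sigma<\infty$ whenever $\alpha<1$, and your argument goes through uniformly in $t$. The same decay also handles the initial-data term $\nabla e^{t\Delta}g_0$ uniformly. Your side remark about $W^{1,p}\hookrightarrow L^\infty$ for the case $p>n$ is a red herring; the point there is simply that $\alpha=\tfrac12+\tfrac{n}{2p}<1$, so no separate argument is needed.
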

The following lemma provides estimates on the boundary (see Lemma 5.3 in \cite{Hiroshi}):
\begin{lemma} \label{convex}
Assume that $\Omega$ is a convex bounded domain, and that $f\in C^2(\bar{\Omega})$ satisfies $\frac{\partial f}{\partial \nu}=0$ on $\partial \Omega$. Then
\begin{equation*}
    \frac{\partial|\nabla f|^2}{\partial \nu} \leq 0 \qquad \text{ on } \partial \Omega.
\end{equation*}
\end{lemma}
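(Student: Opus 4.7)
The plan is to reduce the claim to a statement about the second fundamental form of $\partial\Omega$ evaluated on $\nabla f$, and then invoke convexity. Fix a point $x_0 \in \partial\Omega$ and extend the outward unit normal $\nu$ to a smooth unit vector field in a neighborhood of $x_0$ (for instance, via a signed distance function to $\partial\Omega$). A direct differentiation of $|\nabla f|^2 = \sum_i (\partial_i f)^2$ gives
\begin{equation*}
    \frac{\partial |\nabla f|^2}{\partial \nu}\bigg|_{x_0} = 2\sum_{i,j} \nu_j (\partial_i f)(\partial_{ij}f) = 2 \langle D^2 f \cdot \nabla f,\, \nu \rangle.
\end{equation*}
Since the Neumann condition $\nabla f \cdot \nu \equiv 0$ holds along $\partial\Omega$, the vector $\nabla f$ lies in the tangent space $T_{x_0}\partial\Omega$, so the right-hand side pairs a tangent vector with $\nu$ through $D^2 f$.

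Next, I would differentiate the scalar identity $\nabla f \cdot \nu = 0$ in an arbitrary tangential direction $X \in T_{x_0}\partial\Omega$. This yields
\begin{equation*}
    0 = \langle D^2 f \cdot X,\, \nu \rangle + \langle \nabla f,\, D\nu \cdot X \rangle,
\end{equation*}
so that $\langle D^2 f \cdot X,\, \nu \rangle = -II(X, \nabla f)$, where $II(X,Y) := \langle D\nu \cdot X, Y \rangle$ is the second fundamental form of $\partial\Omega$ with respect to the outward normal. Taking $X = \nabla f$ (legitimate since it is tangent) gives
\begin{equation*}
    \frac{\partial |\nabla f|^2}{\partial \nu}\bigg|_{x_0} = -2\, II(\nabla f, \nabla f).
\end{equation*}

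Finally, the convexity of $\Omega$, together with smoothness of $\partial\Omega$, is equivalent to the statement that $II$ is positive semi-definite as a quadratic form on each tangent space $T_{x_0}\partial\Omega$. Hence $II(\nabla f, \nabla f) \geq 0$ and the above display is $\leq 0$, giving the claim at the arbitrary boundary point $x_0$.

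The only subtle point is the sign convention: it is essential that with the \emph{outward} normal one has $II \geq 0$ under convexity, so that the minus sign produced by differentiating the constraint $\nabla f \cdot \nu = 0$ works in our favor. Apart from this bookkeeping, the argument is a short tangential-differentiation computation; no compactness, regularity beyond $C^2$, or integration is needed. Alternatively, one could realize $\partial\Omega$ locally as a graph $x_n = \psi(x')$ with $\psi$ concave (for outward normal pointing in $+e_n$) and recover the same identity via an explicit coordinate calculation, but the intrinsic approach above seems cleanest.
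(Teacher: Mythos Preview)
Your argument is correct. The paper does not supply its own proof of this lemma; it merely states the result and refers the reader to Lemma~5.3 in \cite{Hiroshi}. Your computation---reducing $\partial_\nu |\nabla f|^2$ to $-2\,II(\nabla f,\nabla f)$ by tangentially differentiating the Neumann constraint $\nabla f\cdot\nu=0$, and then invoking that convexity forces the second fundamental form (with the outward normal) to be positive semidefinite---is precisely the standard proof one finds in the cited literature. The sign bookkeeping you flag is indeed the only delicate point, and you have it right: for the outward unit normal on a convex domain one has $II\ge 0$, so the minus sign works in your favor.
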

The next lemma giving an useful estimate will later be applied in Section \ref{a priori estimates}. Interested readers are referred to \cite{Tao+Winkler2,Tian2} for more details about the proof. 
\begin{lemma}
Let $\Omega \subset \mathbb{R}^2$ be a bounded domain with smooth boundary, and let $p >1$ and $1\leq r<p$. Then there exists $C>0$ such that for each $\eta >0$, one can pick $C(\eta )>0$ such that
\begin{equation} \label{unif-GN}
    \left \| u \right \|^p_{L^p(\Omega)} \leq \eta \left \|\nabla u \right \|^{p-r}_{L^2(\Omega)}   \left \| u \ln{|u|} \right \|^r_{L^r(\Omega)} +C   \left \| u \right \|^p_{L^r(\Omega)}+C(\eta)
\end{equation}
holds for all $u \in W^{1,2}(\Omega)$.
\end{lemma}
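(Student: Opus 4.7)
The plan is to combine the planar Gagliardo--Nirenberg inequality (Lemma~\ref{GN}) with a super-level-set decomposition that exploits the logarithmic weight, trading a small factor on the leading $\|\nabla u\|_{L^2}$ term for a zeroth-order constant $C(\eta)$.

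First I would apply Lemma~\ref{GN} in dimension $n=2$ with gradient index $2$ and the choices $q = s = r$, so that the interpolation parameter reduces to $a = (1/r - 1/p)/(1/r) = 1 - r/p$ and hence $pa = p-r$. This produces a universal constant $C_0 = C_0(\Omega,p,r)$ with the baseline bound
\[
    \|u\|_{L^p(\Omega)}^p \leq C_0\|\nabla u\|_{L^2(\Omega)}^{p-r}\|u\|_{L^r(\Omega)}^r + C_0\|u\|_{L^r(\Omega)}^p.
\]

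Next I would exploit the log weight by splitting $\Omega$ into $\{|u|\leq M\}$ and $\{|u|>M\}$ for $M\geq e$. On the latter set $\ln|u|\geq\ln M\geq 1$ yields the pointwise bound $|u|^r\leq(\ln M)^{-r}|u\ln|u||^r$, while on the former $|u|^r\leq M^r$ holds trivially. Integration gives $\|u\|_{L^r}^r\leq(\ln M)^{-r}\|u\ln|u|\|_{L^r}^r + M^r|\Omega|$. Given $\eta>0$, I choose $M=M(\eta):=\exp((2C_0/\eta)^{1/r})$ so that $C_0(\ln M)^{-r}=\eta/2$. Substituting back into the baseline yields, with $K(\eta):=C_0 M(\eta)^r|\Omega|$,
\[
    \|u\|_{L^p}^p \leq \tfrac{\eta}{2}\|\nabla u\|_{L^2}^{p-r}\|u\ln|u|\|_{L^r}^r + K(\eta)\|\nabla u\|_{L^2}^{p-r} + C_0\|u\|_{L^r}^p.
\]

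The final step is to absorb the residual $K(\eta)\|\nabla u\|_{L^2}^{p-r}$ into the first and last terms up to an additive $C(\eta)$. My plan is a case distinction on the size of $\|u\ln|u|\|_{L^r}^r$. In the \emph{large-entropy regime} $\|u\ln|u|\|_{L^r}^r\geq 2K(\eta)/\eta$, the residual is dominated termwise by $\tfrac{\eta}{2}\|\nabla u\|_{L^2}^{p-r}\|u\ln|u|\|_{L^r}^r$ and the conclusion is immediate. In the complementary \emph{small-entropy regime}, applying the Step~2 bound with $M=e$ forces $\|u\|_{L^r}^r$ to be bounded by an $\eta$-dependent constant, after which a Young-type inequality (combined with the elementary bound $\|u\|_{L^r}^r\leq\|u\|_{L^r}^p+1$) redistributes $K(\eta)\|\nabla u\|_{L^2}^{p-r}$ between the $\tfrac{\eta}{2}\|\nabla u\|_{L^2}^{p-r}\|u\ln|u|\|_{L^r}^r$ term, the $C\|u\|_{L^r}^p$ term, and the additive constant. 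The main obstacle I anticipate is precisely this small-entropy case: the constants $K(\eta)$ grow doubly-exponentially in $1/\eta$, and the Young-inequality bookkeeping must be arranged so that no spurious $\|\nabla u\|_{L^2}^p$ or $\eta$-dependent multiplier on $\|u\|_{L^r}^p$ appears in the final estimate.
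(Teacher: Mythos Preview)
The paper itself does not give a proof of this lemma; it simply refers to \cite{Tao+Winkler2,Tian2}. So there is no ``paper's proof'' to compare against, and your proposal has to be judged on its own.

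Your Steps~1 and~2 are fine, but Step~3 has a genuine gap that cannot be repaired within your scheme. After Step~2 you are left with the residual $K(\eta)\|\nabla u\|_{L^2}^{p-r}$, a term that carries \emph{no} factor measuring the size of $u$. In the small-entropy regime you correctly observe that $\|u\|_{L^r}^r$ and $\|u\ln|u|\|_{L^r}^r$ are bounded by $\eta$-dependent constants, but this gives no control whatsoever on $\|\nabla u\|_{L^2}$. Concretely, take $u_k(x)=1+k^{-1}\sin(k^2 x_1)$ on a square: then $\|u_k\ln|u_k|\|_{L^r}^r\sim k^{-r}\to 0$, $\|u_k\|_{L^r}^p\to|\Omega|^{p/r}$, while $\|\nabla u_k\|_{L^2}^{p-r}\sim k^{p-r}\to\infty$. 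For this family the residual $K(\eta)\|\nabla u_k\|_{L^2}^{p-r}$ blows up, whereas every term you are allowed on the right-hand side of \eqref{unif-GN} stays bounded (the product $\|\nabla u_k\|_{L^2}^{p-r}\|u_k\ln|u_k|\|_{L^r}^r$ behaves like $k^{p-2r}$, which is dominated by $k^{p-r}$). No Young-type rearrangement can fix this: you are trying to prove an inequality of the form $K(\eta)A\le \tfrac{\eta}{2}AB+C D+C(\eta)$ with $A$ unbounded and $B,D$ bounded, which is simply false.

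The underlying mistake is the order of operations: you apply Gagliardo--Nirenberg to $u$ first and split $\|u\|_{L^r}^r$ afterwards, so the constant piece $M^r|\Omega|$ of the split gets multiplied by $\|\nabla u\|_{L^2}^{p-r}$. The cure is to truncate \emph{before} invoking GN. Write $\|u\|_{L^p}^p=\int_{\{|u|>M\}}|u|^p+\int_{\{|u|\le M\}}|u|^p$; bound the second integral by $M^p|\Omega|$ (a pure constant, no gradient), and for the first apply your Step~1 to $w:=(|u|-M)_+\in W^{1,2}(\Omega)$. Since $\|\nabla w\|_{L^2}\le\|\nabla u\|_{L^2}$ and, on $\{|u|>M\}$, $(|u|-M)^r\le(\ln M)^{-r}|u\ln|u||^r$, one obtains $\|w\|_{L^p}^p\le C_0(\ln M)^{-r}\|\nabla u\|_{L^2}^{p-r}\|u\ln|u|\|_{L^r}^r+C_0\|u\|_{L^r}^p$, and choosing $M=M(\eta)$ finishes the proof with $C=2^{p-1}C_0$ independent of $\eta$. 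This is essentially the argument in the cited references.
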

The following lemma providing estimates on the boundary will be useful in Section \ref{a priori estimates}.
\begin{lemma} \label{Boundary-est}
If $r\geq \frac{1}{2}$, $p\in (1,\frac{3}{2})$, and $g \in C^1(\Bar{\Omega}),$ then for every $\epsilon>0$, there exists a constant $C=C(\epsilon,\Omega, p,r)$ such that 
\begin{equation} \label{ine-bdr}
    \int_{\partial \Omega} |g|^{p+2r-1} \leq \epsilon \int_{\Omega} |g|^{2r+1} +\epsilon \int_{\Omega} |\nabla g^r|^2 +C.
\end{equation}
\end{lemma}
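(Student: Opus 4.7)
\medskip

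\noindent\textbf{Proof plan for Lemma \ref{Boundary-est}.} The plan is to convert the boundary integral into a volume integral via the divergence theorem, and then rewrite the resulting term in a form compatible with $|\nabla g^r|^2$ through the identity $|g|^{r-1}|\nabla g|=\frac{1}{r}|\nabla g^r|$.

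First, pick a smooth vector field $\vec{\Phi} \in C^\infty(\bar{\Omega};\mathbb{R}^n)$ with $\vec{\Phi}\cdot \nu = 1$ on $\partial \Omega$ (available since $\partial \Omega$ is smooth). Setting $s=p+2r-1 > 0$ and integrating by parts,
\begin{equation*}
\int_{\partial \Omega} |g|^s \, dS = \int_\Omega \nabla \cdot \bigl(|g|^s \vec{\Phi}\bigr) \, dx \leq s\int_\Omega |g|^{s-1}|\nabla g||\vec{\Phi}|\,dx + \int_\Omega |g|^s |\mathrm{div}\,\vec{\Phi}|\,dx,
\end{equation*}
which yields, for some constant $C_1=C_1(\Omega,p,r)$,
\begin{equation*}
\int_{\partial \Omega} |g|^{p+2r-1} \leq C_1 \int_\Omega |g|^{p+2r-2}|\nabla g|\,dx + C_1 \int_\Omega |g|^{p+2r-1}\,dx.
\end{equation*}

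Second, rewrite $|g|^{p+2r-2}|\nabla g| = |g|^{p+r-1}\cdot|g|^{r-1}|\nabla g| = \frac{1}{r}|g|^{p+r-1}|\nabla g^r|$ (valid since $r\geq \frac12>0$), and apply Young's inequality with exponent $2$ to get
\begin{equation*}
C_1\int_\Omega |g|^{p+2r-2}|\nabla g|\,dx \leq \epsilon \int_\Omega |\nabla g^r|^2\,dx + C_2 \int_\Omega |g|^{2(p+r-1)}\,dx
\end{equation*}
for some $C_2=C_2(\epsilon,\Omega,p,r)$. Collecting the estimates gives
\begin{equation*}
\int_{\partial \Omega} |g|^{p+2r-1} \leq \epsilon \int_\Omega |\nabla g^r|^2\,dx + C_2 \int_\Omega |g|^{2(p+r-1)}\,dx + C_1 \int_\Omega |g|^{p+2r-1}\,dx.
\end{equation*}

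Third, exploit the hypothesis $p<\frac{3}{2}$: the exponents satisfy $2(p+r-1) < 2r+1$ and $p+2r-1 < 2r+1$, so Young's inequality applied once more on $\Omega$ (a bounded domain) yields
\begin{equation*}
C_2|g|^{2(p+r-1)} + C_1|g|^{p+2r-1} \leq \epsilon |g|^{2r+1} + C_3,
\end{equation*}
with $C_3 = C_3(\epsilon,\Omega,p,r)$. After relabeling $\epsilon$, this combines with the previous display to give the claimed inequality \eqref{ine-bdr}. The only mildly delicate point is ensuring $p<\frac{3}{2}$ is used precisely where $2(p+r-1)<2r+1$ is required; the other exponent inequality is free once $p<2$. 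No step presents a real obstacle, since the divergence-theorem trick neatly trades a boundary term for a gradient term that is absorbed into $|\nabla g^r|^2$ via the chain rule identity.
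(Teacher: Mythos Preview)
Your proof is correct and follows essentially the same approach as the paper: the paper invokes the trace embedding $W^{1,1}(\Omega)\hookrightarrow L^1(\partial\Omega)$ (which is precisely your divergence-theorem step), substitutes $\phi=|g|^r$, and then applies Young's inequality twice exactly as you do, using $2(p+r-1)<2r+1$ from $p<\tfrac32$. The only cosmetic difference is that the paper works with $\phi=|g|^r$ from the outset while you unwrap the chain-rule identity $|g|^{r-1}|\nabla g|=\frac{1}{r}|\nabla g^r|$ explicitly.
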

\begin{proof}
Let $\phi :=|g|^r $, we have $\phi^{2+\frac{p-1}{r}} \in W^{1,1}(\Omega)$. Trace theorem, $W^{1,1}(\Omega) \to L^1(\partial \Omega)$, yields
\begin{align}\label{Boundary-est-proof-1}
    \int_{\partial \Omega} \phi ^{2+\frac{p-1}{r}} &\leq c_1 \int_{ \Omega} \phi ^{2+\frac{p-1}{r}} +(2+\frac{p-1}{r})c_1 \int_{ \Omega} \phi ^{1+\frac{p-1}{r}}|\nabla \phi| \notag \\
    &\leq c_1 \int_{ \Omega} \phi ^{2+\frac{p-1}{r}} +3c_1 \int_{ \Omega} \phi ^{1+\frac{p-1}{r}}|\nabla \phi|   
\end{align}
where $c_1=c_1(n,\Omega)>0$. By Young's inequality, the following holds for all $\epsilon>0$
\begin{align} \label{Boundary-est-proof-2}
    3c_1 \int_{ \Omega} \phi^{1+\frac{p-1}{r}}|\nabla \phi| \leq \epsilon \int_{ \Omega}|\nabla \phi|^2 +\frac{c_1^2}{\epsilon}\int_\Omega \phi^{2+\frac{2(p-1)}{r}}.
\end{align}
 We apply Young's inequality again to obtain a further estimate
\begin{align} \label{Boundary-est-proof-3}
  c_1 \int_{ \Omega} \phi ^{2+\frac{p-1}{r}}+\frac{c_1^2}{\epsilon}\int_\Omega \phi^{2+\frac{2(p-1)}{r}} \leq \epsilon \int_{ \Omega} \phi^{2+\frac{1}{r}} +c_2.
\end{align}
where $c_2$ depending on $\epsilon,p,r,n, \Omega$.  We complete the proof of \eqref{ine-bdr} by collecting \eqref{Boundary-est-proof-1},\eqref{Boundary-est-proof-2} and \eqref{Boundary-est-proof-3} together.
\end{proof}
The following lemma is similar to the previous lemma, however,  it is necessary to trace the dependency of $\eta, n$, and $p$ due to the involved technique of iteration used in Section \ref{globalbddn}.
\begin{lemma} \label{Boundary-regularity}
If $p\in (1,\frac{3}{2})$, and $g \in C^1(\Bar{\Omega}),$ then for every $\eta \in (0,\frac{1}{2})$, there exists a constant $c=c(n,\Omega, p)>0$ such that 
\begin{equation} \label{B-r-1}
    \int_{\partial \Omega} |g|^{p+2r-1} \leq \eta  \int_{\Omega} |g|^{2r+1} +\eta \int_{\Omega} |\nabla g^r|^2 +c \eta ^{\frac{n+2}{2p-3}} \left ( \int_{\Omega} |g|^{r}  \right )^2.
\end{equation}
\end{lemma}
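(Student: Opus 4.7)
Plan: I would follow the structure of the proof of Lemma~\ref{Boundary-est}---trace embedding followed by Young's inequality---but close the argument via the Gagliardo--Nirenberg--Young estimate of Lemma~\ref{GNY}, which produces the $(\int_\Omega|g|^r)^2$ term with an explicit, quantitative power of $\eta$.

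First, set $\phi:=|g|^r$ and $s:=2+(p-1)/r$, so that $\phi^{s}=|g|^{p+2r-1}$. Applying the trace embedding $W^{1,1}(\Omega)\hookrightarrow L^{1}(\partial\Omega)$ to $\phi^s$ and then Young's inequality to the resulting $\phi^{s-1}|\nabla\phi|$ term gives, for every $\epsilon_1>0$,
\[
\int_{\partial\Omega}\phi^{s}\leq c_1\int_\Omega\phi^{s}+\epsilon_1\int_\Omega|\nabla\phi|^{2}+\frac{c_2}{\epsilon_1}\int_\Omega\phi^{2(s-1)},
\]
with constants depending only on $n,\Omega,p$ provided $r$ is bounded below so that $s\leq 3$---which is exactly the regime needed for the Moser iteration in Section~\ref{globalbddn}.

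Second, since $p<3/2$ both $s$ and $2(s-1)$ lie strictly between $2$ and $2+1/r$, so the pointwise Young inequality
\[
\phi^{2+\alpha/r}\leq \epsilon\,\phi^{2+1/r}+C_{p}\,\epsilon^{-\alpha/(1-\alpha)}\phi^{2}\quad\text{for }\alpha\in\{p-1,\,2(p-1)\},
\]
which is valid because $\alpha<1$, replaces $\int_\Omega\phi^{s}$ and $\int_\Omega\phi^{2(s-1)}$ by multiples of $\int_\Omega|g|^{2r+1}$ and $\int_\Omega\phi^{2}$. Lemma~\ref{GNY} applied to $f=\phi$ then converts the $\int_\Omega\phi^{2}$ term into
\[
\int_\Omega\phi^{2}\leq C\tilde\eta\int_\Omega|\nabla\phi|^{2}+C\tilde\eta^{-n/2}\Bigl(\int_\Omega\phi\Bigr)^{2}.
\]

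Finally, I would pick $\epsilon_1\sim\eta$ in the first Young step, the two pointwise Young weights as $\sim\eta$ and $\sim\eta^{2}$ respectively (the quadratic power being forced by the need to absorb the cross term $\epsilon_1^{-1}\epsilon\sim\eta$ into the $\int_\Omega|g|^{2r+1}$-slot), and $\tilde\eta\sim\eta^{2/(3-2p)}$ (dictated by the requirement that $\tilde\eta$ times the accumulated constant $\sim\eta^{-(2p-1)/(3-2p)}$ on $\int_\Omega|\nabla\phi|^{2}$ again become $\lesssim\eta$). Tracing the cascade, the surviving coefficient of $(\int_\Omega\phi)^{2}$ is of order $\eta^{-(2p-1)/(3-2p)}\cdot\tilde\eta^{-n/2}=\eta^{-(n+2p-1)/(3-2p)}$; because $p<3/2$ yields $(n+2p-1)/(3-2p)<(n+2)/(3-2p)$ and $\eta\in(0,1/2)$, this is dominated by $\eta^{(n+2)/(2p-3)}$, as required. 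The main obstacle is precisely this parameter bookkeeping: individually every estimate is elementary, but one must verify that the worst exponent appearing, $(n+2p-1)/(3-2p)$, stays below the prescribed threshold $(n+2)/(3-2p)$, and this last inequality is secured only by the sub-critical hypothesis $p<3/2$.
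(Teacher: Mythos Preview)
Your proposal is correct and follows essentially the same route as the paper: trace embedding on $\phi^{s}$, Young's inequality on the cross term, pointwise Young interpolation to pass from exponents $2+(p-1)/r$ and $2+2(p-1)/r$ to $2+1/r$ and $2$, and finally Lemma~\ref{GNY} with $\tilde\eta\sim\eta^{2/(3-2p)}$ to convert $\int_\Omega\phi^{2}$ into $(\int_\Omega\phi)^{2}$. Your parameter choices $\epsilon_1\sim\eta$, $\delta_1\sim\eta$, $\delta_2\sim\eta^{2}$ coincide with the paper's $\delta_1=\epsilon/(2c_1)$, $\delta_2=\epsilon^{2}/(2c_1^{2})$, and your observation that the resulting exponent $(n+2p-1)/(3-2p)$ is dominated by $(n+2)/(3-2p)$ for $p<3/2$ is exactly the bound the paper records.
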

\begin{proof}
We use the same notations as in the proof of Lemma \ref{Boundary-est} and assume $\epsilon \in (0,1)$. By Young's inequality,
\[
ab \leq \epsilon a^s +\frac{s-1}{s}(s\epsilon)^\frac{1}{1-s}b^\frac{s}{s-1}< \epsilon a^s +(s\epsilon)^\frac{1}{1-s}b^\frac{s}{s-1} , 
\]
for any $a,b \geq 0$, $\epsilon\in (0, \frac{1}{2})$ and $s>1$, we obtain
\begin{align} \label{b-r-proof-1}
    c_1 \int_\Omega \phi^{2+\frac{p-1}{r}} \leq c_1 \delta_1\int_\Omega \phi^{2+\frac{1}{r}} +c_1\left ( \frac{p-1}{\delta_1} \right )^{\frac{p-1}{2-p}}\int_\Omega \phi^2,
\end{align}
and 
\begin{align}\label{b-r-proof-2}
    \frac{c_1^2}{\epsilon} \int_\Omega \phi^{2+\frac{2p-2}{r}} \leq  \frac{c_1^2}{\epsilon} \delta_2\int_\Omega \phi^{2+\frac{1}{r}} + \frac{c_1^2}{\epsilon}\left ( \frac{2p-2}{\delta_2} \right )^{\frac{2p-2}{3-2p}}\int_\Omega \phi^2.
\end{align}
Choosing $\delta_1 = \frac{\epsilon}{2c_1}$ and $\delta_2= \frac{\epsilon^2}{2c_1^2}$, and then combining with \eqref{Boundary-est-proof-1}, \eqref{Boundary-est-proof-2}, \eqref{b-r-proof-1}, and \eqref{b-r-proof-2}, we obtain 
\begin{align} \label{b-r-proof-3}
    \int_{\partial \Omega} \phi ^{2+\frac{p-1}{r}} \leq \epsilon \int_\Omega |\nabla \phi|^2 +\epsilon \int_\Omega \phi^{2+\frac{1}{r}}+ (c_3\epsilon^{\frac{2p-1}{2p-3}} +c_4\epsilon^{\frac{p-1}{p-2}}), 
 \int_\Omega \phi^2,
\end{align}
where $c_3,c_4 >0$ independent of $r,\epsilon$. Notice that 
\begin{align*}
    c_3\epsilon^{\frac{2p-1}{2p-3}} +c_4\epsilon^{\frac{p-1}{p-2}} = \epsilon^{\frac{2p-1}{2p-3}} (c_3+c_4 \epsilon^{\frac{1}{(2-p)(3-2p)}}) \leq c_5 \epsilon^{\frac{2p-1}{2p-3}}, 
\end{align*}
where $c_5:=c_3+c_4$. This, together with \eqref{b-r-proof-3} implies that
\begin{align}\label{b-r-proof-4}
     \int_{\partial \Omega} \phi ^{2+\frac{p-1}{r}} \leq \epsilon \int_\Omega |\nabla \phi|^2 +\epsilon \int_\Omega \phi^{2+\frac{1}{r}} +c_5 \epsilon^{\frac{2p-1}{2p-3}} \int_\Omega \phi^2. 
\end{align}
We apply Lemma \ref{GNY} with $\eta = \min \left \{ 1, c_5^{-1}\epsilon^{\frac{2}{3-2p}} \right \}$ to have
\begin{align}\label{b-r-proof-5}
    c_5\epsilon^{\frac{2p-1}{2p-3}}\int_\Omega \phi^2 \leq \epsilon \int_\Omega |\nabla \phi|^2 +c_6\epsilon^{\frac{n+2}{2p-3}}\left ( \int_\Omega \phi \right )^2,
\end{align}
where $c_6>0$ independent of $r,\epsilon$. Collect \eqref{b-r-proof-4}, \eqref{b-r-proof-5} and substitute $\eta =2\epsilon$ proves \eqref{B-r-1}.
\end{proof}

\begin{lemma} \label{Boundary-est-3}
If $p\in (1,\frac{7}{5})$, $n=3$, and $(u,v)$ are in $C^1(\Bar{\Omega} \times (0,T_{\rm max}))$ and 
\begin{align}
    \int_\Omega |\nabla v(\cdot ,t)|^2 \leq A
\end{align}
holds for all $t\in (0,T_{\rm max})$, then for every $\epsilon>0$, there exists a constant $C=C(\epsilon,\Omega, p, A)$ such that 
\begin{equation} \label{ine-bdr-1}
    \int_{\partial \Omega} u^p|\nabla v|^2 \leq \epsilon \int_{\Omega} u^3 + |\nabla u|^2 + u^2|\nabla v|^2+\left | \nabla |\nabla v|^2 \right |^2 +C.
\end{equation}
holds for all $t\in (0,T_{\rm max})$.
\end{lemma}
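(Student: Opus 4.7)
The plan is to apply the trace embedding $W^{1,1}(\Omega)\hookrightarrow L^1(\partial\Omega)$ to $\phi:=u^p|\nabla v|^2$. Writing $w:=|\nabla v|^2$, the product rule yields $|\nabla\phi|\le p\,u^{p-1}|\nabla u|\,w+u^p|\nabla w|$, so the trace theorem reduces the estimate to controlling the three interior integrals
\[
  \int_{\partial\Omega}u^p w \;\le\; c\int_\Omega u^p w \,+\, cp\int_\Omega u^{p-1}|\nabla u|\,w \,+\, c\int_\Omega u^p |\nabla w|,
\]
each of which I intend to absorb into the four quantities on the right-hand side.

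Two of these are routine. Since $p<2$, Young's inequality gives $u^p\le \delta u^2+C(\delta)$, and combining with the hypothesis $\int_\Omega w\le A$ yields $\int_\Omega u^p w\le \delta\int_\Omega u^2 w + C$. For the term containing $|\nabla w|$, Cauchy-Schwarz followed by $u^{2p}\le \delta u^3+C$ (valid because $2p<3$) gives $\int_\Omega u^p|\nabla w|\le \delta\int_\Omega|\nabla w|^2+\delta\int_\Omega u^3+C$.

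The cross term $\int_\Omega u^{p-1}|\nabla u|\,w$ is the heart of the argument. Cauchy-Schwarz reduces it to $\delta\int_\Omega|\nabla u|^2+C\int_\Omega u^{2(p-1)}w^2$, and for the last integral my plan is to interpolate by exponent matching: write
\[
  u^{2(p-1)}w^2 \;=\; (u^2 w)^{p-1}(w^\gamma)^{2-p}, \qquad \gamma := \frac{3-p}{2-p},
\]
so that the powers of $u$ and $w$ balance and the weights $p-1$ and $2-p$ sum to $1$. A parametrized weighted AM-GM then yields $u^{2(p-1)}w^2 \le \delta\, u^2 w + C(\delta)\,w^\gamma$ for every $\delta>0$. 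It remains to control $\int_\Omega w^\gamma$, which I handle by applying Lemma \ref{GN} to $w$ with $n=3$, $r=2$, $q=s=1$: this produces $\|w\|_{L^\gamma}^\gamma \le C\bigl(\|\nabla w\|_{L^2}^{\gamma a_\gamma}\|w\|_{L^1}^{\gamma(1-a_\gamma)}+\|w\|_{L^1}^\gamma\bigr)$ with $\gamma a_\gamma = 6(\gamma-1)/5$, whence using $\int_\Omega w\le A$ and Young's inequality one absorbs the bound into $\delta\int_\Omega|\nabla w|^2+C$, \emph{provided} $\gamma a_\gamma<2$.

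The main obstacle is precisely this last condition, and it is what forces the threshold on $p$. A short calculation shows $\gamma a_\gamma<2 \iff \gamma<\tfrac{8}{3} \iff p<\tfrac{7}{5}$, matching the hypothesis of the lemma exactly; the borderline $p=\tfrac{7}{5}$ corresponds to the critical Sobolev exponent $L^{8/3}(\Omega)$ for $w\in W^{1,2}(\Omega)$ in three dimensions, beyond which Gagliardo-Nirenberg no longer permits absorption into $\|\nabla w\|_{L^2}^2$. Once this is secured, assembling all three term estimates and choosing $\delta$ small in terms of $\epsilon$ delivers the claimed inequality.
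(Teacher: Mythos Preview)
Your proposal is correct and follows essentially the same route as the paper: trace embedding $W^{1,1}\hookrightarrow L^1(\partial\Omega)$, identical treatment of the two easy terms, and for the cross term a Young/Cauchy--Schwarz splitting that isolates $\int_\Omega|\nabla v|^{2\gamma}$ with $\gamma=\tfrac{3-p}{2-p}$, then Gagliardo--Nirenberg in $n=3$ to absorb it into $\int_\Omega|\nabla|\nabla v|^2|^2$ precisely under the condition $\gamma<\tfrac{8}{3}\iff p<\tfrac{7}{5}$. The only cosmetic difference is the order of the two Young steps in the cross term (the paper first peels off $u^2|\nabla v|^2$ and then $|\nabla u|^2$, you do the reverse via your factorization $u^{2(p-1)}w^2=(u^2w)^{p-1}(w^\gamma)^{2-p}$), and the paper passes through the fixed exponent $\tfrac{16}{3}$ before invoking Gagliardo--Nirenberg whereas you apply it directly at $\gamma$; neither change affects the argument or the threshold.
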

\begin{proof}
By trace theorem $W^{1,1}(\Omega) \to L^{1}(\partial \Omega)$,
\begin{align} \label{Boundary-est-1-proof-1}
    \int_{\partial \Omega} |u|^{p}|\nabla v|^2 \leq c_1\int_{ \Omega}  |u|^{p}|\nabla v|^2  +c_1\int_{ \Omega} |u|^p\left |\nabla |\nabla v|^2  \right | +c_1p\int_{ \Omega} |u|^{p-1}|\nabla u||\nabla v|^2
\end{align}
where $c_1=c_1(\Omega)>0$. Apply Young's inequality yields
\begin{align}\label{Boundary-est-1-proof-2}
    c_1\int_{ \Omega}  |u|^{p}|\nabla v|^2 &\leq \frac{\epsilon }{2} \int_{\Omega} u^2|\nabla v|^2 +\frac{2-p}{2}\left ( \frac{p}{\epsilon c_1} \right )^{\frac{2-p}{p}}\int_{\Omega}|\nabla v|^2  \notag \\
    &\leq \frac{\epsilon }{2} \int_{\Omega} u^2|\nabla v|^2 +c_2
\end{align}
where $c_2:=\frac{2-p}{2}A\left ( \frac{p}{\epsilon c_1} \right )^{\frac{2-p}{p}}$. Note that $2p<3$, we apply Young's inequality to obtain
\begin{align}\label{Boundary-est-1-proof-3}
    c_1\int_{ \Omega} |u|^p\left |\nabla |\nabla v|^2  \right | &\leq \frac{\epsilon }{2} \int_{ \Omega}\left |\nabla |\nabla v|^2  \right |^2 + \frac{1}{2\epsilon} \int_\Omega u^{2p} \notag \\
    &\leq \frac{\epsilon }{2} \int_{ \Omega}\left |\nabla |\nabla v|^2  \right |^2 +\epsilon \int_{ \Omega} u^3 +c_3
\end{align}
where $c_3=c_3(\epsilon,\Omega,p)$. By Young's inequality,
\begin{align}\label{Boundary-est-1-proof-4}
    c_1p\int_{ \Omega} |u|^{p-1}|\nabla u||\nabla v|^2 &\leq \frac{\epsilon }{2}\int_{ \Omega} u^2|\nabla v|^2 +c_4\int_{ \Omega}|\nabla u|^{\frac{2}{3-p}}|\nabla v|^2\notag \\
    &\leq \frac{\epsilon }{2}\int_{ \Omega} u^2|\nabla v|^2 +\epsilon \int_{ \Omega} |\nabla u|^2 +c_5\int_{ \Omega} |\nabla v|^{\frac{6-2p}{2-p}}
\end{align}
where $c_4, c_5$ are positive and dependent on $\epsilon, \Omega, p$. Here, we use the condition $1<p<\frac{7}{5}$ to obtain $\frac{3-p}{2-p} < \frac{8}{3}$. By Young's inequality,
\begin{align}\label{Boundary-est-1-proof-5}
    c_5\int_{ \Omega} |\nabla v|^{\frac{6-2p}{2-p}} \leq \eta \int_{ \Omega} |\nabla v|^{\frac{16}{3}} +c_6
\end{align}
where $c_6=c_6(\eta,\epsilon,p,\Omega)$. In light of Gagliardo-Nirenberg inequality,
\begin{align} \label{Boundary-est-1-proof-6}
    \left \| |\nabla v|^2 \right \|_{L^{\frac{8}{3}}(\Omega)} &\leq c_{GN} \left \| \nabla |\nabla v|^2 \right \|^{\frac{3}{4}}_{L^2(\Omega)}\left \| |\nabla v|^2 \right  \|^{\frac{1}{4}}_{L^1(\Omega)}+c_{GN}\left \| |\nabla v|^2 \right  \|_{L^1(\Omega)} \notag\\
    &\leq c_{GN}A^{\frac{1}{4}}\left \| \nabla |\nabla v|^2 \right \|^{\frac{3}{4}}_{L^2(\Omega)} +c_{GN}A.
\end{align}
Hence 
\begin{align}\label{Boundary-est-1-proof-7}
    \eta \int_{ \Omega} |\nabla v|^{\frac{16}{3}} &\leq \eta \left ( c_{GN}A^{\frac{1}{4}}\left \| \nabla |\nabla v|^2 \right \|^{\frac{3}{4}}_{L^2(\Omega)} +c_{GN}A \right )^{\frac{8}{3}}\notag \\ 
    &\leq 2^{5/3} (c_{GN}A^{\frac{1}{4}})^{\frac{8}{3}} \eta \int_\Omega \left | \nabla |\nabla v|^2 \right |^2 +2^{5/3} (c_{GN}A)^{\frac{8}{3}}\eta.
\end{align}
Choosing $\eta$ such that ${2^{5/3} (c_{GN}A^{\frac{1}{4}})^{\frac{8}{3}}} \eta = \epsilon$, and plugging into \eqref{Boundary-est-1-proof-7},  \eqref{Boundary-est-1-proof-5}, and \eqref{Boundary-est-1-proof-4} respectively, we obtain 
\begin{align}\label{Boundary-est-1-proof-8}
      c_1p\int_{ \Omega} |u|^{p-1}|\nabla u||\nabla v|^2 \leq \epsilon \int_\Omega \left | \nabla |\nabla v|^2 \right |^2 + \frac{\epsilon }{2}\int_{ \Omega} u^2|\nabla v|^2 +\epsilon \int_{ \Omega} |\nabla u|^2+ c_7
\end{align}
where $c_7= c_6+2^{5/3} (c_{GN}A)^{\frac{8}{3}}\eta$. We finally complete the proof of \eqref{ine-bdr-1} by substituting \eqref{Boundary-est-1-proof-2}, \eqref{Boundary-est-1-proof-3} and \eqref{Boundary-est-1-proof-8} into \eqref{Boundary-est-1-proof-1}.
\end{proof}

\section{A priori estimates} \label{a priori estimates}
Let us first give a priori estimate for the parabolic-elliptic system, 
\begin{lemma} \label{lpboundedness-p-e-e}
If $\mu >0$ and $p \in (1,\frac{3}{2})$, for all $r \in (1, \frac{\chi \alpha}{(\chi \alpha-\mu)_+})$ then there exists $c= c(r,\left \| u_0 \right \|_{L^{r}(\Omega)})>0$ such that
 \begin{equation} \label{l1-ine1}
     \left \| u(\cdot,t) \right \|_{L^{r}(\Omega)} \leq c, \qquad \forall t\in (0,T_{\rm max}).
 \end{equation}
\end{lemma}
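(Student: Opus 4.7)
The plan is to test the first equation of \eqref{1.1} against $u^{r-1}$ and close a Gronwall-type inequality for $y(t):=\int_\Omega u^r(\cdot,t)$. Since Theorem \ref{local-existence-theorem} provides $u>0$ and $v>0$ on $\bar\Omega\times[0,T_{\rm max})$, multiplying the $u$-equation by $u^{r-1}$, integrating over $\Omega$, and integrating by parts against the boundary condition $\partial_\nu u = u^p$ yields
\begin{align*}
\frac{1}{r}\frac{d}{dt}\int_\Omega u^r + \frac{4(r-1)}{r^2}\int_\Omega \bigl|\nabla u^{r/2}\bigr|^2
&= \int_{\partial\Omega} u^{p+r-1}\,dS + \chi(r-1)\int_\Omega u^{r-1}\nabla u\cdot\nabla v \\
&\quad + a\int_\Omega u^r - \mu\int_\Omega u^{r+1}.
\end{align*}
The cross term I rewrite as $\frac{\chi(r-1)}{r}\int_\Omega \nabla u^r\cdot\nabla v$ and, after one more integration by parts (the boundary contribution vanishes because $\partial_\nu v = 0$) combined with the elliptic identity $\Delta v=\beta v-\alpha u$ from $\tau=0$, reduce it to $\frac{\chi\alpha(r-1)}{r}\int_\Omega u^{r+1}-\frac{\chi\beta(r-1)}{r}\int_\Omega u^r v$.

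Focusing on the main case $\chi>0$, the term $-\frac{\chi\beta(r-1)}{r}\int u^r v$ is nonpositive and can be dropped, while the coefficient of the dissipative $\int u^{r+1}$ becomes
\begin{align*}
c_0 := \mu - \frac{\chi\alpha(r-1)}{r},
\end{align*}
which is strictly positive iff either $\chi\alpha\le\mu$ (any $r>1$) or $\chi\alpha>\mu$ and $r<\chi\alpha/(\chi\alpha-\mu)$---precisely the stated hypothesis on $r$. This leaves
\begin{align*}
\frac{1}{r}\frac{d}{dt}\int_\Omega u^r + \frac{4(r-1)}{r^2}\int_\Omega\bigl|\nabla u^{r/2}\bigr|^2 + c_0\int_\Omega u^{r+1} \le \int_{\partial\Omega} u^{p+r-1}\,dS + a\int_\Omega u^r.
\end{align*}

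To absorb the boundary flux, I apply Lemma \ref{Boundary-est} with its parameter equal to $r/2\ge 1/2$: the boundary exponent becomes $p+r-1$ and the interior exponents match $r+1$ and $|\nabla u^{r/2}|^2$, so for any $\epsilon>0$,
\begin{align*}
\int_{\partial\Omega} u^{p+r-1}\,dS \le \epsilon\int_\Omega u^{r+1} + \epsilon\int_\Omega \bigl|\nabla u^{r/2}\bigr|^2 + C(\epsilon).
\end{align*}
Choosing $\epsilon$ small enough that both $\epsilon$-terms are absorbed into the LHS, and handling the linear term by Young's inequality $a\int u^r \le \tfrac{c_0}{4}\int u^{r+1}+C$, I obtain $\frac{d}{dt}\int_\Omega u^r + c_1\int_\Omega u^{r+1}\le C$. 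Hölder's inequality then gives $\int u^{r+1}\ge |\Omega|^{-1/r}\bigl(\int u^r\bigr)^{1+1/r}$, so $y$ satisfies $y' + c_2 y^{1+1/r}\le C$, and a standard ODE comparison yields $y(t)\le \max\{y(0),(C/c_2)^{r/(r+1)}\}$ on $(0,T_{\rm max})$.

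The principal obstacle is the boundary flux $\int_{\partial\Omega} u^{p+r-1}$: a direct trace embedding would either demand too much regularity of $u$ or consume all of the available dissipation, and the logistic damping $-\mu u^2$ alone is too weak to dominate it. Lemma \ref{Boundary-est} is precisely what allows this term to be split between an $L^{r+1}$-piece and a gradient piece, and it is exactly here that the upper bound $p<3/2$ is forced, since the underlying Young estimate inside that lemma degenerates at $p=3/2$.
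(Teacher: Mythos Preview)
Your proof is correct and follows essentially the same approach as the paper: test against a power of $u$, use the elliptic equation $-\Delta v=\alpha u-\beta v$ to rewrite the cross term, drop the sign-favorable $v$-contribution (the paper likewise uses $v\ge 0$ here), invoke Lemma~\ref{Boundary-est} to absorb the boundary flux, and close via an ODE inequality. The only cosmetic differences are that the paper tests against $u^{2r-1}$ (so its $2r$ is your $r$) and finishes with a linear Gronwall $y'+y\le c$ after absorbing an added $\int_\Omega u^{2r}$ into $\int_\Omega u^{2r+1}$ by Young's inequality, whereas you close with the superlinear comparison $y'+c_2 y^{1+1/r}\le C$; both routes are equivalent.
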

\begin{proof}
Multiplying the first equation in the system \eqref{1.1} by $u^{2r-1}$ yields
\begin{align}
    \frac{1}{2r}\frac{d}{dt}\int_{\Omega} u^{2r}  &= \int_{\Omega} u^{2r-1}u_t \notag\\
    &=  \int_\Omega u^{2r-1} \left [ \Delta u -\chi \nabla (u \nabla v) +f(u) \right ] \notag \\
    &=-\frac{2r-1}{r^2}\int_{\Omega} |\nabla u^r|^2  -\chi\frac{2r-1}{2r}\int_{\Omega}u^{2r} \Delta v  +\int_{\Omega}f(u)u^{2r-1} +\int_{\partial \Omega} u^{2r+p-1}\, dS  \notag \\
    &=-\frac{2r-1}{r^2}\int_{\Omega} |\nabla u^r|^2  +\frac{2r-1}{2r}\int_{\Omega}u^{2r}( \chi \alpha u -\chi \beta v ) \notag \\&+\int_{\partial \Omega} u^{2r+p-1}\, dS +a\int_{\Omega} u^{2r}-\mu \int_{ \Omega} u^{2r+1}.
\end{align}
Since $v\geq 0$, we have
\begin{align} \label{l1.1}
    \frac{d}{dt} \int_{\Omega} u^{2r} &\leq -\frac{2(2r-1)}{r}\int_{\Omega} |\nabla u^r|^2  -\left [ 2r\mu -\chi \alpha(2r-1) \right ]\int_\Omega u^{2r+1} \notag \\ 
    & +2r\int_{\partial \Omega} u^{2r+p-1}\, dS+2ra\int_{\Omega} u^{2r}.
\end{align}
By Lemma \ref{Boundary-est}, we obtain
\begin{align} \label{l1.4}
    2r \int_{\partial \Omega} u^{2r+p-1}\, dS \leq 2r\epsilon \int_\Omega |\nabla u^r|^2 + 2r\epsilon \int_\Omega u^{2r+1}+c_2.
\end{align}
We make use of Young's inequality to obtain
\begin{align} \label{l1.5}
    (2ra+1)\int_{\Omega}u^{2r} \leq \epsilon \int_{ \Omega} u^{2r+1} +c_3.
\end{align}
Collecting \eqref{l1.1}, \eqref{l1.4} and \eqref{l1.5}, we have
\begin{align} \label{l1.6}
    \frac{d}{dt}\int_{\Omega} u^{2r} +\int_{\Omega} u^{2r}  &\leq   \left [ 2r\epsilon -\frac{2(2r-1)}{r}  \right ]  \int_{\Omega}|\nabla u^r|^2 \notag 
    \\&-\left [  2r\mu -\chi \alpha(2r-1) -2\epsilon \right ]\int_{\Omega}u^{2r+1}+c_4.
\end{align}
If $\frac{\chi \alpha}{(\chi \alpha-\mu)_+
}>2r>1$, then selecting $\epsilon = \min \left \{  \frac{2r-1}{r^2}, \frac{2r\mu-\chi\alpha(2r-1)}{2} \right \}  $ and plugging into \eqref{l1.6}, we deduce
\begin{align}
    \frac{d}{dt}\int_{\Omega} u^{2r}+\int_{\Omega} u^{2r}  \leq c_2
\end{align}
This yields \eqref{l1-ine1}, hence the proof is complete.
\end{proof}
In the parabolic-parabolic case $\tau =1$, the following lemma gives us a priori bounds for solution of \eqref{1.1} with initial data \eqref{1.1.2} and the boundary condition \eqref{boundary-data}.
\begin{lemma} \label{i-es}
If $1<p<\frac{3}{2}$, and $(u,v)$ is a classical solution to \eqref{1.1}  with initial data \eqref{1.1.2} and the boundary condition \eqref{boundary-data} without the convexity assumption of $\Omega$, and $n \geq 2$ then there exists a positive constant $C$ such that
\begin{equation} \label{initialestimate}
    \int_\Omega (u(\cdot,t)+1) \ln{(u(\cdot,t)+1)} + \int_\Omega |\nabla v(\cdot,t)|^2  \leq C
\end{equation}
for all $t \in (0, T_{\rm max})$.
\end{lemma}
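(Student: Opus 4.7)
The plan is to control the Lyapunov-like functional
\[
\phi(t) := \int_\Omega (u+1)\ln(u+1)\, dx + K \int_\Omega |\nabla v|^2\, dx
\]
for a sufficiently large constant $K > 0$, by deriving a differential inequality for $\phi$ that is closed by the logistic dissipation $-\mu u^2$ together with a continuation argument. The three key ingredients are the entropy-type test of the $u$-equation, the $H^1$-test of the $v$-equation, and the Sobolev trace bound of Lemma \ref{Boundary-est}.

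First I would test the $u$-equation against $\ln(u+1)+1$: integration by parts in the diffusion term yields $-\int_\Omega \tfrac{|\nabla u|^2}{u+1}$ together with a boundary remainder $\int_{\partial\Omega} u^p[\ln(u+1)+1]$; the chemotaxis term produces $\chi\int_\Omega \tfrac{u}{u+1}\nabla u\cdot\nabla v$ (the boundary trace vanishing thanks to $\partial_\nu v=0$); and the logistic source contributes $a\int_\Omega u[\ln(u+1)+1] - \mu\int_\Omega u^2[\ln(u+1)+1]$. Young's inequality
\[
\chi\int_\Omega \tfrac{u}{u+1}\nabla u\cdot\nabla v \le \tfrac{1}{2}\int_\Omega \tfrac{|\nabla u|^2}{u+1} + \tfrac{\chi^2}{2}\int_\Omega u|\nabla v|^2
\]
halves the diffusion and reduces the chemotactic coupling to the bad term $\int_\Omega u|\nabla v|^2$. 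For the boundary integral, the elementary bound $\ln(u+1)\le C_\eta(u^\eta+1)$ with $\eta>0$ chosen so that $p+\eta<\tfrac{3}{2}$, combined with Lemma \ref{Boundary-est} applied to $g=u+1$ with $r=\tfrac{1}{2}$ (and $p$ replaced by $p+\eta$), gives
\[
\int_{\partial\Omega} u^p[\ln(u+1)+1]\, dS \le \epsilon\int_\Omega u^2 + \epsilon\int_\Omega \tfrac{|\nabla u|^2}{u+1} + C,
\]
all of which is absorbable into the diffusion dissipation. Testing the $v$-equation against $-\Delta v$ and invoking $\partial_\nu v=0$ produces
\[
\tfrac{1}{2}\tfrac{d}{dt}\int_\Omega |\nabla v|^2 + \int_\Omega |\Delta v|^2 + \beta\int_\Omega |\nabla v|^2 \le \tfrac{1}{2}\int_\Omega |\Delta v|^2 + \tfrac{\alpha^2}{2}\int_\Omega u^2.
\]
Adding $K$ times this inequality to the $u$-estimate and absorbing every $\int_\Omega u^2$-type term via $\mu\int_\Omega u^2[\ln(u+1)+1]$ (using $u^2\le \epsilon\, u^2\ln(u+1)+C$ uniformly), the only term left to control is $\tfrac{\chi^2}{2}\int_\Omega u|\nabla v|^2$.

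This last coupling is the main obstacle. I would split it via $\int_\Omega u|\nabla v|^2 \le \epsilon\int_\Omega u^2 + C_\epsilon \int_\Omega |\nabla v|^4$; the first summand is again handled by logistic damping, while for $\int_\Omega |\nabla v|^4$ I would combine the Gagliardo--Nirenberg inequality (Lemma \ref{GN}) with the standard $H^2$-Neumann regularity $\|D^2 v\|_{L^2}\le C(\|\Delta v\|_{L^2}+\|\nabla v\|_{L^2})$, which holds without the convexity assumption, obtaining in two spatial dimensions $\int_\Omega |\nabla v|^4 \le C\bigl(\|\Delta v\|_{L^2}^2+\|\nabla v\|_{L^2}^2\bigr)\|\nabla v\|_{L^2}^2$. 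The resulting differential inequality has the structure
\[
\phi'(t) + c\phi(t) + \bigl(K - C\phi(t)\bigr)\|\Delta v(\cdot,t)\|_{L^2}^2 \le C,
\]
so choosing $K$ large enough that $K>2C\phi(0)$ and invoking a standard continuation argument confines $\phi(t)\le K/(2C)$ throughout $(0,T_{\max})$, proving the claim. The delicate point is precisely this last step: the simultaneous need of logistic absorption for $\int u^2$ and of dimension-sensitive interpolation for $\int|\nabla v|^4$ is what forces the stronger hypotheses in higher dimensions, as reflected in Theorem \ref{3d}.
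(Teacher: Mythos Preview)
Your handling of the boundary term and the overall Lyapunov structure are fine, but the treatment of the chemotactic cross-term creates two genuine problems that prevent the argument from closing as stated.

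First, after you split $\tfrac{\chi^2}{2}\int_\Omega u|\nabla v|^2 \le \epsilon\int_\Omega u^2 + C_\epsilon\int_\Omega |\nabla v|^4$, the Gagliardo--Nirenberg bound $\int_\Omega |\nabla v|^4 \le C(\|\Delta v\|_{L^2}^2+\|\nabla v\|_{L^2}^2)\|\nabla v\|_{L^2}^2$ you invoke is valid only for $n=2$; for $n\ge 3$ the interpolation exponent on $\|\Delta v\|_{L^2}$ exceeds~$2$ and cannot be absorbed by $\tfrac{K}{2}\|\Delta v\|_{L^2}^2$. The lemma, however, is stated for all $n\ge 2$.

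Second, even in two dimensions the continuation argument does not close. Multiplying the $v$-estimate by $K$ produces the term $\tfrac{K\alpha^2}{2}\int_\Omega u^2$, which must be absorbed by the fixed logistic budget $\mu\int_\Omega u^2[\ln(u+1)+1]$; since $u^2\le u^2[\ln(u+1)+1]$, this forces $K<2\mu/\alpha^2$. (Trying instead $u^2\le \epsilon\,u^2\ln(u+1)+C(\epsilon)$ makes the additive constant grow like $e^{c K}$.) With $K$ bounded independently of the data, the requirement $\phi(0)<K/(2C)$ needed to start the continuation fails for large initial data, so the argument does not yield a bound for arbitrary $(u_0,v_0)$.

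The paper sidesteps both issues by integrating the cross-term by parts once more: writing $\tfrac{u}{u+1}\nabla u=\nabla\bigl(u-\ln(u+1)\bigr)$ and using $\partial_\nu v=0$ gives
\[
\chi\int_\Omega \tfrac{u}{u+1}\nabla u\cdot\nabla v
= -\chi\int_\Omega \bigl(u-\ln(u+1)\bigr)\Delta v
\le \tfrac{1}{2}\int_\Omega (\Delta v)^2 + \chi^2\int_\Omega u^2.
\]
The $\tfrac{1}{2}\int_\Omega(\Delta v)^2$ is swallowed by the $-2\int_\Omega(\Delta v)^2$ coming from the $v$-equation (with $K=1$), and $\chi^2\int_\Omega u^2$ is absorbed by the logistic term. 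No $|\nabla v|^4$ appears, no continuation is needed, and the estimate is dimension-independent.
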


\begin{proof}
Let denote $y(t):= \int_\Omega (u(\cdot,t)+1) \ln{(u(\cdot,t)+1)} + \int_\Omega |\nabla v(\cdot,t)|^2  $, we have
\begin{align} \label{i-es.1}
    y'(t)&= \int_\Omega \left [ \Delta u -\chi \nabla \cdot (u \nabla v) +f(u)  \right ] \left [ \ln{(u+1)+1} \right ] \notag \\
    &+2\int_\Omega \nabla v \cdot \nabla  \left ( \Delta v +\alpha u- \beta v \right ) \notag \\
    &:= I_1+I_2.
\end{align}
By integration by parts, $I_1$ can be rewritten as
\begin{align} \label{i-es.2}
    I_1 &= -\int_\Omega \frac{|\nabla u|^2}{u+1} +\chi \int_\Omega \frac{u}{u+1}\nabla u \cdot \nabla v  
    +a\int_\Omega u\left [ \ln{(u+1)+1} \right ] \notag \\ 
    &-\mu\int_\Omega u^2\left [ \ln{(u+1)+1} \right ]  +\int_{\partial \Omega}u^p\left [ \ln{(u+1)+1} \right ] \, dS 
\end{align}
By integration by parts, Cauchy-Schwarz inequality and elementary inequality $\ln (u+1) \leq u$, we have
\begin{align}\label{i-es.10}
  \chi \int_\Omega \frac{u}{u+1}\nabla u \cdot \nabla v &= \chi \int_\Omega \nabla \left ( u- \ln (u+1) \right ) \cdot \nabla v \notag \\
 & = - \chi \int_\Omega  \left ( u- \ln (u+1) \right )\Delta v 
  \leq \frac{1}{2} \int_\Omega (\Delta v)^2 + \chi^2 \int_\Omega u^2.
\end{align}
One can verify that there exists $c_1(\mu, a)>0$ satisfying
\begin{equation*}
   u \left [ \ln{(u+1)+1} \right ] \leq \frac{\mu}{4 a}u^2 \left [ \ln{(u+1)+1} \right ]+c_1,
\end{equation*}
hence,
\begin{equation} \label{i-es.3}
   a\int_\Omega u\left [ \ln{(u+1)+1} \right ] \leq \frac{\mu }{4}\int_\Omega u^2\left [ \ln{(u+1)+1} \right ]    +c_1.
\end{equation}
In light of Sobolev's trace theorem, $W^{1,1}(\Omega) \hookrightarrow L^1(\partial \Omega)$, there exists $c_2(\Omega)>0$ such that
\begin{align} \label{i-es.4}
    \int_{\partial \Omega}u^p\left [ \ln{(u+1)+1} \right ] \, dS &\leq c_2\int_{\Omega}u^p\left [ \ln{(u+1)+1} \right ]+pc_2\int_{\Omega}u^{p-1}|\nabla u|\left [ \ln{(u+1)+1} \right ]\notag \\
    &+c_2\int_{\Omega}\frac{u^p}{u+1}|\nabla u| \left [ \ln{(u+1)+1} \right ].
\end{align}
By Young's inequality, we have
\begin{align} \label{i-es.6}
      pc_2\int_{\Omega}u^{p-1}|\nabla u|\left [ \ln{(u+1)+1} \right ] \leq \frac{1}{4}\int_\Omega \frac{|\nabla u|^2}{u+1}+pc_2\int_\Omega u^{2p-2}(u+1) \left [ \ln{(u+1)+1} \right ]^2,
\end{align}
and 
\begin{align} \label{i-es.5}
    c_2\int_{\Omega}\frac{u^p}{u+1}|\nabla u| \left [ \ln{(u+1)+1} \right ]\leq \frac{1}{4}\int_\Omega \frac{|\nabla u|^2}{u+1}+c^2_2\int_\Omega \frac{u^{2p}}{u+1} \left [ \ln{(u+1)+1} \right ]^2.
\end{align}
By the similar argument as in \eqref{i-es.3}, there exists $c_3(p,\Omega,\mu)>0$ such that
\begin{align} \label{i-es.7}
     c_2\int_{\Omega}u^p\left [ \ln{(u+1)+1} \right ] &+pc_2\int_\Omega u^{2p-2}(u+1) \left [ \ln{(u+1)+1} \right ]^2 \notag \\
     &+c^2_2\int_\Omega \frac{u^{2p}}{u+1} \left [ \ln{(u+1)+1} \right ]^2 \leq  \frac{\mu }{4}\int_\Omega u^2\left [ \ln{(u+1)+1} \right ]  +c_3.
\end{align}
From \eqref{i-es.4} to \eqref{i-es.7}, we obtain
\begin{equation} \label{i-es.8}
    \int_{\partial \Omega}u^p\left [ \ln{(u+1)+1} \right ] \, dS \leq \frac{1}{2}\int_\Omega \frac{|\nabla u|^2}{u+1}+\frac{\mu }{4}\int_\Omega u^2\left [ \ln{(u+1)+1} \right ]+c_3.
\end{equation}
Now, we handle $I_2$ as follows:
\begin{align} \label{i-es.9}
    I_2 &= -2\int_\Omega (\Delta v)^2 -2\beta \int_\Omega |\nabla v|^2+2\alpha \int_\Omega \nabla u \cdot \nabla v. 
\end{align}
By integration by part and Young's inequality, we have 
\begin{align} \label{i-es.10*}
    2\alpha \int_\Omega \nabla u \cdot \nabla v\leq \frac{1}{2}\int_\Omega (\Delta v)^2 + 2\alpha^2 \int_\Omega u^2.
\end{align}
One can verify that there exists $c_4(\alpha,\beta,\chi,\Omega)>0$ such that 
\begin{equation} \label{i-es.12}
(\chi^2+2\alpha^2)    \int_\Omega u^2 +2 \beta \int_\Omega (u+1)\ln{ (u+1)} \leq \frac{\mu }{4}\int_\Omega u^2\left [ \ln{(u+1)+1} \right ]+c_4.
\end{equation}
Collecting \eqref{i-es.2}, \eqref{i-es.3}, \eqref{i-es.8} and  from \eqref{i-es.9} to \eqref{i-es.12}, we obtain
\begin{equation}
    y'(t)+2\beta y(t) \leq -\frac{1}{2}\int_\Omega \frac{|\nabla u|^2}{u+1} - \frac{\mu }{4}\int_\Omega u^2\left [ \ln{(u+1)+1} \right ]+c_5 \leq c_5, \qquad \forall t \in (0,T_{\rm max}),
\end{equation}
where  $c_5:=c_1+c_3+c_4$. This, together with the Gronwall's inequality, yields
\begin{equation*}
    y(t) \leq e^{-2\beta t}y(0)+\frac{c_5}{2\beta }(1-e^{-2\beta t}) \leq C
\end{equation*}
where $C:= \max \left \{ y(0),\frac{c_5}{2\beta} \right \}$, and the proof of \eqref{initialestimate} is complete.
\end{proof}

The following lemma gives an $L^{2}$-bound in two-dimensional space for the parabolic-parabolic system.
\begin{lemma} \label{L2est}
If $\tau=1$, $n=2$, $1<p<\frac{3}{2}$, and $(u,v)$ is a classical solution to \eqref{1.1} with initial data \eqref{1.1.2} and the boundary condition then there exists a positive constant $C$ such that
\begin{equation} \label{L2estimate}
    \int_\Omega u^2(\cdot,t) + \int_\Omega |\nabla v(\cdot,t)|^4 \leq C
\end{equation}
for all $t \in (0, T_{\rm max})$.
\end{lemma}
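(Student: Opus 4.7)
The natural quantity to control is $y(t) := \int_\Omega u^2 + \int_\Omega |\nabla v|^4$. The plan is to derive coupled differential inequalities for $\int_\Omega u^2$ and $\int_\Omega |\nabla v|^4$, tame the cross term $\int_\Omega u^2 |\nabla v|^2$ using the $L\log L$-bound of Lemma \ref{i-es} together with \eqref{unif-GN} and Gagliardo--Nirenberg, and close the argument by Gronwall's inequality.

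Testing the $u$-equation by $u$, integration by parts gives
\[
\frac{1}{2}\frac{d}{dt}\int_\Omega u^2 + \int_\Omega |\nabla u|^2 + \mu\int_\Omega u^3 = \chi\int_\Omega u\,\nabla u\cdot\nabla v + a\int_\Omega u^2 + \int_{\partial\Omega} u^{p+1}\,dS.
\]
Applying Young's inequality to the drift term and Lemma \ref{Boundary-est} with $r=1$ to the boundary integral (so that $\int_{\partial\Omega} u^{p+1}\,dS \leq \epsilon\int_\Omega|\nabla u|^2 + \epsilon\int_\Omega u^3 + C(\epsilon)$), I arrive after absorbing small $\epsilon$ at
\[
\frac{d}{dt}\int_\Omega u^2 + \frac{1}{2}\int_\Omega |\nabla u|^2 + \mu\int_\Omega u^3 \leq \chi^2\int_\Omega u^2|\nabla v|^2 + 2a\int_\Omega u^2 + C.
\]
For the second equation I compute $\frac{1}{4}\frac{d}{dt}\int_\Omega |\nabla v|^4 = \int_\Omega |\nabla v|^2\nabla v\cdot\nabla v_t$ using the Bochner identity $\frac{1}{2}\Delta|\nabla v|^2 = |D^2 v|^2 + \nabla v\cdot\nabla\Delta v$. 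The convexity of $\Omega$ together with Lemma \ref{convex} forces $\frac{\partial|\nabla v|^2}{\partial\nu}\leq 0$ on $\partial\Omega$, which eliminates the resulting boundary term after integration by parts, yielding $\int_\Omega |\nabla v|^2\nabla v\cdot\nabla\Delta v \leq -\frac{1}{2}\int_\Omega |\nabla|\nabla v|^2|^2 - \int_\Omega |\nabla v|^2|D^2 v|^2$. Integrating by parts the $\alpha$-term (the boundary integral vanishes because $\nabla v\cdot\nu = 0$) and applying Young's inequality produces
\[
\frac{d}{dt}\int_\Omega |\nabla v|^4 + \int_\Omega |\nabla|\nabla v|^2|^2 + 3\int_\Omega |\nabla v|^2|D^2 v|^2 + 4\beta\int_\Omega |\nabla v|^4 \leq C_1\int_\Omega u^2|\nabla v|^2.
\]

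Adding the two inequalities, I control the coupling by Cauchy--Schwarz: $\int_\Omega u^2|\nabla v|^2 \leq \|u\|_{L^4}^2\|\nabla v\|_{L^4}^2$. Inequality \eqref{unif-GN} with $p=4$, $r=1$, combined with $\|u\log u\|_{L^1(\Omega)} \leq C$ from Lemma \ref{i-es}, gives $\|u\|_{L^4}^2 \leq C\sqrt{\eta}\,\|\nabla u\|_{L^2}^{3/2} + C$ for any $\eta > 0$. The standard 2D Gagliardo--Nirenberg inequality applied to $w := |\nabla v|^2 \in W^{1,2}(\Omega)$, together with $\|\nabla v\|_{L^2}^2 \leq C$ from Lemma \ref{i-es}, gives $\|\nabla v\|_{L^4}^2 \leq C\|\nabla|\nabla v|^2\|_{L^2}^{1/2} + C$. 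Multiplying the two bounds, expanding, and applying Young's inequality to each resulting term (starting with $\|\nabla u\|_{L^2}^{3/2}\|\nabla|\nabla v|^2\|_{L^2}^{1/2}$), one obtains for any $\tilde\delta > 0$
\[
\int_\Omega u^2|\nabla v|^2 \leq \tilde\delta\int_\Omega|\nabla u|^2 + \tilde\delta\int_\Omega|\nabla|\nabla v|^2|^2 + C(\tilde\delta),
\]
the smallness of $\tilde\delta$ being obtained by first fixing the Young weights and then taking $\eta$ sufficiently small. Absorbing this into the dissipation on the left of the summed inequality, using Young to handle $2a\int_\Omega u^2 \leq \frac{\mu}{2}\int_\Omega u^3 + C$, and applying H\"older with Young to convert $\frac{\mu}{2}\int_\Omega u^3$ into a decay $c_0\int_\Omega u^2 - C$, I arrive at $y'(t) + c_0 y(t) \leq C$, and Gronwall's inequality yields \eqref{L2estimate}.

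The main obstacle is absorbing the coupling $\int_\Omega u^2|\nabla v|^2$ without any largeness assumption on $\mu$, $\chi$, or $\alpha$. The decisive observation is that the $L\log L$ regularity from Lemma \ref{i-es} unlocks \eqref{unif-GN} and thereby produces an $L^4$-interpolation of $u$ with an arbitrarily small prefactor $\sqrt{\eta}$ on the dissipative piece; this arbitrarily small factor is exactly what lets us absorb the cross term into the parabolic dissipation independently of the size of the problem parameters.
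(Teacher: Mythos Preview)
Your argument is correct and follows essentially the same strategy as the paper: the same energy functional $\int_\Omega u^2 + \int_\Omega |\nabla v|^4$, the same use of the Bochner identity together with Lemma \ref{convex} for the $|\nabla v|^4$-evolution, and the same decisive idea of feeding the $L\log L$ bound from Lemma \ref{i-es} into \eqref{unif-GN} to absorb the coupling term $\int_\Omega u^2|\nabla v|^2$ with an arbitrarily small prefactor. The only notable variation is the splitting of that coupling term: the paper uses Young's inequality $\int_\Omega u^2|\nabla v|^2 \leq \epsilon\int_\Omega |\nabla v|^6 + \epsilon^{-1/2}\int_\Omega u^3$ and then applies \eqref{unif-GN} with exponent $3$ to $\int_\Omega u^3$, whereas you use Cauchy--Schwarz $\int_\Omega u^2|\nabla v|^2 \leq \|u\|_{L^4}^2\|\nabla v\|_{L^4}^2$ and apply \eqref{unif-GN} with exponent $4$; both routes close in the same way.
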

\begin{proof}
Let denote 
\begin{equation*}
    \phi(t):= \frac{1}{2} \int_\Omega u^2 +\frac{1}{4} \int_\Omega |\nabla v|^4,
\end{equation*}
we have
\begin{align} \label{L2ets1}
    \phi'(t)&= \int_\Omega u\left [ \Delta u -\chi \nabla \cdot (u \nabla v)  +f(u)  \right ] \notag \\
    &+\int_\Omega |\nabla v|^2 \nabla v \cdot \nabla  \left ( \Delta v +\alpha u- \beta v \right ) \notag \\
    &:= J_1+J_2.
\end{align}
By integration by parts, we obtain 
\begin{align} \label{L2ets2}
    J_1 &=- \int_\Omega |\nabla u|^2 +\chi  \int_\Omega u \nabla u \cdot \nabla v +a\int_\Omega u^2 -\mu\int_\Omega u^3+ \int_{\partial \Omega}u^{p+1}\, dS.
\end{align}
By Young's inequality, we have
\begin{align} \label{L2ets3}
    \chi  \int_\Omega u \nabla u \cdot \nabla v +a\int_\Omega u^2\leq \frac{1}{2}\int_\Omega |\nabla u|^2 +\chi^2 \int_\Omega u^2 |\nabla v|^2.
\end{align}
In light of Sobolev's trace theorem, $W^{1,1}(\Omega) \hookrightarrow L^1(\partial \Omega)$, there exists  $ c_1:= c_1 (\Omega)>0$ such that
\begin{align} \label{L2ets4}
   \int_{\partial \Omega}u^{p+1}\, dS \leq c_1 \int_\Omega u^{p+1}+c_1(p+1)\int_\Omega u^{p}|\nabla u|.
\end{align}
Since $1<p<\frac{3}{2}$, we apply Young's inequality to obtain
\begin{align} \label{L2ets5}
    \int_{\partial \Omega}u^{p+1}\, dS &\leq \frac{\mu}{4}\int_{\Omega} u^3 + \frac{1}{4}\int_\Omega |\nabla u|^2+ {c_1(p+1)^2}\int_{\Omega} u^{2p} +c_2 \notag\\
    &\leq \frac{1}{4}\int_\Omega |\nabla u|^2+ \frac{\mu}{2}\int_{\Omega} u^3 +c_3,
\end{align}
where $c_2,c_3>0$ depending only on $p,\mu ,\Omega$.
To deal with $J_2$, we make use of the following pointwise identity
\begin{align*}
    \nabla v \cdot \nabla \Delta v = \frac{1}{2} \Delta( |\nabla v|^2) - |D^2v|^2 
\end{align*}
to obtain
\begin{align} \label{L2ets6}
    J_2&= -\frac{1}{2}\int_\Omega |\nabla|\nabla v|^2|^2-\int_\Omega |\nabla v|^2 |D^2 v|^2 \notag\\
    &+\alpha\int_\Omega |\nabla v|^2\nabla v\cdot \nabla u \notag \\
    &- \beta \int_\Omega |\nabla v|^4 +\frac{1}{2}\int_{\partial \Omega} \frac{\partial |\nabla v|^2 }{\partial \nu}|\nabla v|^2.
\end{align}
Applying Lemma \ref{convex} and the pointwise inequality $(\Delta v)^2 \leq 2 |D^2 v|^2$ to \eqref{L2ets6}, we deduce
\begin{align} \label{L2ets7}
    J_2  &\leq  -\frac{1}{2}\int_\Omega |\nabla|\nabla v|^2|^2 -\beta \int_\Omega |\nabla v|^4 \notag \\ &-\frac{1}{2}\int_\Omega |\nabla v|^2 |\Delta v|^2 +\alpha\int_\Omega |\nabla v|^2\nabla v\cdot \nabla u.
\end{align}
By integral by parts and Young's inequality, we obtain
\begin{align} \label{L2ets8}
   \alpha\int_\Omega |\nabla v|^2\nabla v\cdot \nabla u &=-\alpha\int_\Omega u \nabla |\nabla v|^2 \cdot \nabla v  -\alpha\int_\Omega u|\nabla v|^2 \Delta v \notag \\
    &\leq \frac{1}{4}\int_\Omega|\nabla |\nabla v|^2|^2  +  \frac{1}{4}\int_\Omega |\nabla v|^2 |\Delta v|^2 +2\alpha^2 \int_\Omega u^2|\nabla v|^2
\end{align}
Collecting from \eqref{L2ets1} to \eqref{L2ets8} yields
\begin{align} \label{L2ets10}
    \phi'+4\beta  \phi \leq - \frac{1}{4} \int_\Omega |\nabla u|^2-\frac{1}{2}\int_\Omega |\nabla|\nabla v|^2|^2-\frac{\mu}{2} \int_\Omega u^3+c_4\int_\Omega u^2|\nabla v|^2+c_5 \int_\Omega u^2+c_3
\end{align}
where $c_4,c_5$ are positive constants depending on $\alpha,\beta, \chi ,a$. By Young's inequality
\begin{align} \label{L2ets11}
  c_4 \int_\Omega u^2|\nabla v|^2\leq   c_4\epsilon \int_\Omega |\nabla v|^6+ \frac{ c_4}{\sqrt{\epsilon}}\int_\Omega u^3
\end{align}
By Gagliardo-Nirenberg inequality for $n=2$ and \eqref{initialestimate}, there exists $c_{GN} >0$ such that 
\begin{align} \label{L2ets12}
    \int_\Omega |\nabla v|^6 &\leq c_{GN} \left (\int_\Omega |\nabla |\nabla v|^2|^2  \right )\left (\int_\Omega  |\nabla v|^2 \right )+ c_{GN}\left (\int_\Omega  |\nabla v|^2 \right )^3 \notag \\
    &\leq c_6 \left (\int_\Omega |\nabla |\nabla v|^2|^2  \right )+ c_7,
\end{align}
where $c_6,c_7$ are positive constants depending on $c_{GN}$ and $\sup_{t \in (0,T_{\rm max})} \int_\Omega |\nabla v|^2$.
We make use of \eqref{unif-GN} for $n=2$ and \eqref{initialestimate} to obtain
\begin{align} \label{L2ets14}
    \int_\Omega u^3 &\leq \epsilon \left (  \int_\Omega |\nabla u|^2  \right )\left ( \int_\Omega u|\ln u| \right )+C \left ( \int_\Omega u \right )^3 +c(\epsilon) \notag\\
  &\leq c_{8}\epsilon \int_\Omega |\nabla u|^2+c_{9},
\end{align}
where $c_8:= \sup_{t\in (0,T_{\rm max})} \int_\Omega u|\ln u|$ and $c_9>0$ depending on $\epsilon$ and $\sup_{t\in (0,T_{\rm max})} \int_\Omega u$. In light of Young's inequality
\begin{align}\label{L2ets15}
    c_5 \int_\Omega u^2 \leq \frac{\mu}{4} \int_\Omega u^3 +c_{10}.
\end{align}
where $c_{10}>0$ depending on $c_5,\mu,|\Omega|$. Combining from \eqref{L2ets10} to \eqref{L2ets15}, we have
\begin{align} \label{L2ets16}
    \phi'+4\beta \phi &\leq \left (c_{4}c_8\sqrt{\epsilon} -\frac{1}{4} \right ) \int_\Omega |\nabla u|^2+\left ( c_4c_6\epsilon -\frac{1}{4} \right ) \int_\Omega |\nabla |\nabla v|^2|^2 +c_{11},   
\end{align}
where $c_{11}>0$ depending on $\epsilon$. Choosing $\epsilon$ sufficiently small and substituting into \eqref{L2ets16}, we obtain  
\begin{equation}
      \phi'+4\beta \phi \leq c_{11}.
\end{equation}
This, together with Gronwall's inequality yields $\phi(t) \leq C:= \max \left \{ \phi(0),\frac{c_{11}}{4\beta} \right \}$ for all $t \in (0,T_{\rm max})$, and the proof of Lemma \ref{L2est} is complete.
\end{proof}
The next lemma is the key step in the proof of the parabolic-parabolic system in three-dimensional space. 
\begin{lemma} \label{L2est-3d}
Let $(u,v)$ be a classical solution to \eqref{1.1} in a convex bounded domain $\Omega$ with smooth boundary. If $\tau=1$, $n=3$, $1<p< \frac{7}{5}$ and $\mu$ is sufficiently large, then there exists a positive constant $C$ such that
\begin{equation} \label{L2estimate-3d}
    \int_\Omega u^2(\cdot,t) + \int_\Omega |\nabla v(\cdot,t)|^4  \leq C
\end{equation}
for all $t \in (0, T_{\rm max})$.
\end{lemma}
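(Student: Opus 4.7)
The plan is to mirror the two-dimensional proof of Lemma~\ref{L2est}. Define
\[
\phi(t):=\tfrac{1}{2}\int_\Omega u^2(\cdot,t)\,dx+\tfrac{1}{4}\int_\Omega|\nabla v(\cdot,t)|^4\,dx,
\]
and target a Gronwall-type differential inequality $\phi'+c_0\phi\le c_1$, from which \eqref{L2estimate-3d} follows at once. Split $\phi'=J_1+J_2$, with $J_1=\int_\Omega u\,u_t$ and $J_2=\int_\Omega|\nabla v|^2\,\nabla v\cdot\nabla v_t$.

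For $J_1$ I would proceed verbatim from the 2D argument: integration by parts yields the diffusion, chemotaxis, logistic, and boundary contributions, and the boundary piece $\int_{\partial\Omega}u^{p+1}\,dS$ is absorbed by Lemma~\ref{Boundary-est} with $r=1$ (valid since $p<\tfrac32$), while $\chi\int_\Omega u\nabla u\cdot\nabla v$ splits by Young's inequality into $\varepsilon\int_\Omega|\nabla u|^2+C\int_\Omega u^2|\nabla v|^2$. For $J_2$, I would use the pointwise identity $\nabla v\cdot\nabla\Delta v=\tfrac12\Delta|\nabla v|^2-|D^2v|^2$, integration by parts, Lemma~\ref{convex} to drop the $v$-boundary contribution, the inequality $(\Delta v)^2\le 3|D^2v|^2$, and Young's inequality on the cross term $\alpha\int|\nabla v|^2\nabla v\cdot\nabla u$ with $\nabla u$ moved back onto $u$. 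After summing I expect a schematic inequality
\[
\phi'+2\beta\phi+c_2\!\int_\Omega|\nabla u|^2+c_3\!\int_\Omega\!\bigl|\nabla|\nabla v|^2\bigr|^2+\mu\!\int_\Omega u^3\le C\!\int_\Omega u^2|\nabla v|^2+\!\int_{\partial\Omega}\!\!u^p|\nabla v|^2\,dS+c_4,
\]
where the boundary contribution arises naturally in 3D through the substitution $\Delta v=v_t-\alpha u+\beta v$ inside $\alpha\int u|\nabla v|^2\Delta v$, or equivalently via the identity $\int_\Omega\Delta u\,|\nabla v|^2=-\int_\Omega\nabla u\cdot\nabla|\nabla v|^2+\int_{\partial\Omega}u^p|\nabla v|^2$. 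That boundary integral is exactly what Lemma~\ref{Boundary-est-3} is designed to absorb into the four interior quantities $\int u^3$, $\int|\nabla u|^2$, $\int u^2|\nabla v|^2$, and $\int|\nabla|\nabla v|^2|^2$, using the hypothesis $\int|\nabla v|^2\le A$ supplied by Lemma~\ref{i-es}; the restriction $p<\tfrac75$ enters precisely here through the Sobolev-trace exponents appearing in Lemma~\ref{Boundary-est-3}.

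The main obstacle, and the source of the largeness assumption on $\mu$, is closing the interior mixed term $C\int u^2|\nabla v|^2$. I would split it by a weighted H\"older/Young estimate as $\tfrac{\mu}{2}\int u^3+K_\mu\int|\nabla v|^6$, the weight being chosen so that $K_\mu\to 0$ as $\mu\to\infty$, thereby absorbing the first piece into $-\mu\int u^3$. For the second piece I would apply Lemma~\ref{GN} to $w:=|\nabla v|^2$ with $n=3$, interpolating between $L^1(\Omega)$ (uniformly bounded by Lemma~\ref{i-es}) and $L^2(\Omega)$ (bounded by $4\phi$), and then use Young's inequality to obtain
\[
\int_\Omega|\nabla v|^6\le\delta\!\int_\Omega\!\bigl|\nabla|\nabla v|^2\bigr|^2+C_\delta\phi^3+C.
\]
Choosing $\delta$ small absorbs the gradient term into $c_3\int|\nabla|\nabla v|^2|^2$; the residual super-linear feedback term $K_\mu C_\delta\phi^3$ carries a prefactor of order $\mu^{-2}C_\delta$, which, after optimizing the weights, becomes arbitrarily small for $\mu\ge\mu_0$ large.

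The hardest step is precisely this closure: in three dimensions the $|\nabla v|^4$-energy couples to itself super-linearly through $\int|\nabla v|^6$, and without the largeness of $\mu$ to shrink the feedback coefficient the Gronwall inequality cannot be closed globally in time. Once the feedback prefactor is made small, a standard continuity/bootstrap argument in time, starting from the bounded initial data, delivers $\phi'+c_0\phi\le c_1$, and Gronwall's lemma then yields the claimed uniform bound \eqref{L2estimate-3d}.
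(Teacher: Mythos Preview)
Your plan departs from the paper's proof in a way that creates a real gap. With the two-term energy $\phi=\tfrac12\int_\Omega u^2+\tfrac14\int_\Omega|\nabla v|^4$, the only boundary integrals that arise are $\int_{\partial\Omega}u^{p+1}\,dS$ from $J_1$ and the convexity-dropped $\int_{\partial\Omega}|\nabla v|^2\partial_\nu|\nabla v|^2\,dS$ from $J_2$; neither of the two mechanisms you propose actually produces $\int_{\partial\Omega}u^p|\nabla v|^2\,dS$. In the paper that mixed boundary term, and with it Lemma~\ref{Boundary-est-3} and the restriction $p<\tfrac75$, appear only because the energy is augmented by a third summand $\tfrac13\int_\Omega u|\nabla v|^2$: differentiating it yields $\tfrac13\int_\Omega|\nabla v|^2 u_t$, and integrating $\tfrac13\int_\Omega|\nabla v|^2\Delta u$ by parts is what gives $\tfrac13\int_{\partial\Omega}u^p|\nabla v|^2\,dS$.

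More importantly, that same extra summand is what removes the super-linear feedback you are trying to tame. The term $\tfrac13\int_\Omega|\nabla v|^2 u_t$ also contributes $-\tfrac{\mu}{3}\int_\Omega u^2|\nabla v|^2$ directly from the logistic source, and this negative term (with coefficient proportional to $\mu$) absorbs all the $C\int_\Omega u^2|\nabla v|^2$ contributions \emph{linearly} once $\mu$ exceeds an explicit threshold depending only on $\alpha,\chi,a$. The paper then obtains the clean inequality $\psi'+2\beta\psi\le c$ and closes by Gronwall with no bootstrap. By contrast, your route through Young's inequality and $\int_\Omega|\nabla v|^6$ leaves a residual $K_\mu C_\delta\,\phi^3$; the continuity argument you sketch can close this only when $\phi(0)$ lies below the unstable equilibrium $\sim(\beta/K_\mu)^{1/2}$, which forces $\mu_0$ to depend on the initial data. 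That is strictly weaker than what the paper proves and what Theorem~\ref{3d} asserts, where $\mu_0$ is given explicitly and independently of $u_0,v_0$. The missing idea is precisely the cross term $\int_\Omega u|\nabla v|^2$ in the energy functional.
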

\begin{remark}
 When we look at the proof of Theorem \ref{2dthm} carefully, $n=2$ is utilized to estimate $\int_\Omega u^2|\nabla v|^2$. For $n=3$, in order to eliminate this term we borrow the idea as in \cite{Winkler-2011, Tian}  by introducing an extra term $\int_\Omega u|\nabla v|^2$, in the function $\phi$ for Theorem \ref{2dthm}. Note that the term $u_t|\nabla v|^2$ will introduce $-\mu u^2|\nabla v|^2$, and a sufficiently large parameter $\mu$ will help the estimates.
\end{remark}
\begin{proof}
Let call $\psi(t):= \int_\Omega u^2+\int_\Omega |\nabla v|^4+ \frac{1}{3}\int_\Omega u|\nabla v|^2   $, we have
\begin{align} \label{L2ets1-3d-1}
    \psi'(t)&= 2\int_\Omega u\left [ \Delta u -\chi \nabla \cdot (u \nabla v)  +f(u)  \right ] \notag \\
    &+4\int_\Omega |\nabla v|^2 \nabla v \cdot \nabla  \left ( \Delta v +\alpha u- \beta v \right )  \notag \\
    &+\frac{2}{3}\int_\Omega u\nabla v \cdot \nabla  \left ( \Delta v +\alpha u- \beta v \right ) \notag \\
      &+\frac{2}{3}\int_\Omega |\nabla v|^2\left [ \Delta u -\chi \nabla \cdot (u \nabla v) +f(u)  \right ] \notag \\
    &:= K_1+K_2+K_3+K_4.
\end{align}
By integration by parts, $K_1$ can be written as:
\begin{align}\label{L2ets1-3d-2}
    K_1&= -2\int_\Omega |\nabla u|^2+2 \int_{\partial \Omega} u^{p+1} \, dS+2\chi \int_\Omega u \nabla u \cdot \nabla v \notag\\
    & + 2a  \int_\Omega u^2-2\mu \int_\Omega u^3.
\end{align}
By Lemma \ref{Boundary-est}, we obtain
\begin{align} \label{L2ets1-3d-3}
   2 \int_{\partial \Omega} u^{p+1} \, dS \leq 2 \epsilon_1 \int_{\Omega} u^3 +2 \epsilon_1 \int_{\Omega} |\nabla u|^2 +2c_1.
\end{align}
By Young's inequality, we have
\begin{align}\label{L2ets1-3d-4}
    2\chi \int_\Omega u \nabla u \cdot \nabla v \leq  \chi \epsilon_1  \int_{\Omega}|\nabla u|^2 +\frac{\chi}{\epsilon_1}\int_{\Omega}u^2|\nabla v|^2.
\end{align}
We also have
\begin{align}\label{L2ets1-3d-6}
    2a  \int_\Omega u^2 \leq 2a\epsilon_1 \int_{\Omega}u^3 +c_2,
\end{align}
where $c_2>0$ depending on $\epsilon_1$. From \eqref{L2ets1-3d-2} to \eqref{L2ets1-3d-6}, we obtain
\begin{align}\label{L2ets1-3d-7}
    K_1 &\leq \left [ (2 +\chi)\epsilon_1 -2    \right ] 
    \int_{\Omega}|\nabla u|^2+2\left [ (a+1)\epsilon_1 -\mu \right ]\int_{\Omega} u^3 +\frac{\chi }{\epsilon_1} \int_{\Omega}u^2|\nabla v|^2 +c_3,
\end{align}
where $c_3=2+c_2$. We choose $\epsilon_1 = \frac{1}{2} \min \left \{ \frac{2}{2+\chi }, \frac{\mu}{a+1} \right \}$ and substitute into \eqref{L2ets1-3d-7} to obtain
\begin{align} \label{L2ets1-3d-8}
    K_1 &\leq -\int_{\Omega}|\nabla u|^2- \mu \int_{\Omega} u^3 +\frac{\chi }{\epsilon_1} \int_{\Omega}u^2|\nabla v|^2 +c_3.
\end{align}
To deal with $K_2$, we use similar estimates to \eqref{L2ets6} and \eqref{L2ets7} in estimating $J_2$ in Lemma \ref{L2est} to obtain 
\begin{align}\label{L2ets1-3d-9}
    K_2+ 4 \beta \int_{\Omega}|\nabla v|^4 &\leq 2  (\alpha \epsilon_2-1)\int_{\Omega}|\nabla |\nabla v|^2|^2+ \left ( \frac{2\alpha}{\epsilon_2}+3\alpha^2  \right )\int_{\Omega}u^2|\nabla v|^2.
\end{align}
By substituting $\epsilon_2= \frac{1}{2\alpha}$  into \eqref{L2ets1-3d-9}, we deduce
\begin{align}\label{L2ets1-3d-10}
     K_2+ 4 \beta \int_{\Omega}|\nabla v|^4 &\leq - \int_{\Omega}|\nabla |\nabla v|^2|^2 +7 \alpha^2\int_{\Omega}u^2|\nabla v|^2
\end{align}
To deal with $K_3$, we make use of the following identity
\begin{align*}
    \nabla v \cdot \nabla \Delta v = \frac{1}{2} \Delta( |\nabla v|^2) - |D^2v|^2. 
\end{align*}
Rewriting $K_3$ as
\begin{align}
    K_3 &= -\frac{1}{3}\int_{\Omega} \nabla u \cdot \nabla |\nabla v|^2  -\frac{2}{3}\int_{\Omega} u|D^2v|^2  \notag \\
    &+ \frac{2 \alpha}{3}\int_{\Omega} u \nabla v \cdot \nabla u -\frac{2\beta}{3}\int_{\Omega} u |\nabla v|^2+\frac{1}{3}\int_{\partial \Omega} u \frac{\partial |\nabla v|^2}{\partial \nu}.
\end{align}
We drop the last term due to Lemma \ref{convex}, neglect the second term and apply Cauchy-Schwartz inequality
\[
ab \leq \epsilon a^2 + \frac{1}{4\epsilon} b^2, 
\] to the third and the forth terms with a sufficiently small $\epsilon$ to obtain
\begin{align}\label{L2ets1-3d-11}
    K_3+\frac{2\beta}{3} \int_\Omega  u|\nabla v|^2 &\leq \frac{1}{3} \int_{\Omega}|\nabla |\nabla v|^2|^2
    + \frac{\alpha^2}{8}\int_\Omega u^3 + \frac{1}{3}\int_\Omega |\nabla u|^2.
\end{align}
By integration by parts, $K_4$ can be rewritten as:
\begin{align}\label{L2ets1-3d-12}
K_4&= -\frac{1}{3} \int_\Omega   \nabla  |\nabla v|^2\cdot \nabla u +\frac{1}{3} \int_{\partial \Omega} u^p  |\nabla v|^2 \notag \\
&+\frac{\chi}{3} \int_\Omega   \nabla  |\nabla v|^2 \cdot   u \nabla v  +\frac{a}{3}\int_\Omega u  |\nabla v|^2-\frac{\mu}{3}\int_\Omega u^2  |\nabla v|^2 .
\end{align}
By Cauchy-Schwarz inequality, we obtain 
\begin{align}\label{L2ets1-3d-13}
   -\frac{1}{3} \int_\Omega   \nabla  |\nabla v|^2\cdot \nabla u  \leq \frac{1}{6} \int_\Omega \left | \nabla |\nabla v|^2 \right |^2 +\frac{1}{3}\int_\Omega |\nabla u|^2.
\end{align}
In light of Lemma \ref{Boundary-est-3} and Lemma \ref{i-es}, the following inequality
\begin{align} \label{L2ets1-3d-14}
    \frac{1}{3} \int_{\partial \Omega} u^p  |\nabla v|^2  \leq \frac{1}{3} \int_\Omega |\nabla u|^2+ u^2|\nabla v|^2+  \left | \nabla |\nabla v|^2 \right |^2+u^3 +c_1
\end{align}
holds for all $t \in (0,T_{\rm max})$, with some positive constant $c_1$. By Young's inequality, we obtain
\begin{align}\label{L2ets1-3d-15}
    \frac{\chi}{3} \int_\Omega   \nabla  |\nabla v|^2 \cdot   u \nabla v \leq \frac{1}{6} \int_\Omega  \left | \nabla |\nabla v|^2 \right |^2+\frac{\chi^2}{3} \int_\Omega  u^2|\nabla v|^2,
\end{align}
and  
\begin{align}\label{L2ets1-3d-16}
    \frac{a}{3}\int_\Omega u  |\nabla v|^2 \leq \frac{1}{3}\int_\Omega  u^2|\nabla v|^2 +c_2,
\end{align}
with some positive constant $c_2$.
Combining from \eqref{L2ets1-3d-12} to \eqref{L2ets1-3d-16}, we obtain
\begin{align}\label{L2ets1-3d-17}
    K_4 &\leq \frac{2}{3}\int_\Omega |\nabla u|^2+ \left | \nabla |\nabla v|^2 \right |^2+\frac{\chi^2-\mu+2}{3}\int_\Omega  u^2|\nabla v|^2 + \frac{1}{3} \int_\Omega u^3 +c_3,
\end{align}
with some positive constant $c_3$. Collecting \eqref{L2ets1-3d-8}, \eqref{L2ets1-3d-10}, \eqref{L2ets1-3d-11}, and  \eqref{L2ets1-3d-17}, we have
\begin{align}\label{L2ets1-3d-19}
    \psi' +4\beta \int_\Omega |\nabla v|^4 +\frac{2\beta }{3}\int_\Omega u|\nabla v|^2 &\leq  \left ( 7\alpha^2+\frac{\chi}{\epsilon_1}+\frac{\chi^2-\mu+2}{3} \right ) \int_\Omega  u^2|\nabla v|^2 \notag \\
      &+ \left ( \frac{1}{3}-\mu \right )\int_\Omega u^3 + \frac{ \alpha^2}{8}\int_\Omega u^2 +  c_5.
\end{align}
This leads to
\begin{align}
    \psi' +2\beta \psi &\leq \left ( 7\alpha^2+\frac{\chi}{\epsilon_1}+\frac{\chi^2-\mu+2}{3} \right ) \int_\Omega  u^2|\nabla v|^2 \notag \\
      &+ \left ( \frac{1}{3}-\mu \right )\int_\Omega u^3 + \frac{16\beta+\alpha^2}{8}\int_\Omega u^2 +  c_5.
\end{align}
By Young's inequality, for every $\epsilon>0$, there exists a positive constant $c_6=c_6(\epsilon)$ such that:
\begin{align}
    \frac{16\beta+\alpha^2}{8}\int_\Omega u^2 \leq \epsilon \int_\Omega u^3 +c_6.
\end{align}
Therefore, we need to choose $\mu_0 $ sufficiently large such that 
\begin{align*}
    \begin{cases}
     7\alpha^2+\frac{\chi}{\epsilon_1}+\frac{\chi^2-\mu_0+2}{3}&\leq 0 \\ 
 \frac{1}{3}+\epsilon -\mu_0 &\leq 0 \\ 
 \mu_0 &\geq \frac{2(a+1)}{2+\chi}.
    \end{cases}
\end{align*}
Therefore, if $\mu > \mu_0$, where
\begin{align}\label{L2ets1-3d-20}
    \mu_0 := \max \left \{ \frac{1}{3}, \frac{2(a+1)}{2+\chi}, 3\left ( \frac{\chi}{2+\chi} +7\alpha^2 +\frac{\chi^2+2}{2} \right ) \right \}
\end{align}
and then \eqref{L2ets1-3d-19} yields
\begin{align*}
     \psi' +2 \beta \psi \leq c_6.
\end{align*}
Applying Gronwall's inequality, we see that
\begin{align}
    \psi(t) \leq \max \left \{ \psi(0), \frac{c_6}{2\beta} \right \}
\end{align}
and thereby conclude the proof.
\end{proof}
\begin{remark}
 $\mu_0$ defined as in \eqref{L2ets1-3d-19} is not sharp. We leave the open question to obtain an optimal formula $\mu_0$.
\end{remark}

\section{Global boundedness} \label{globalbddn} 
In this section, we show that if $u$ is uniformly bounded in time under $\left\| \cdot  \right\|_{L^{r_0}(\Omega)}$, then it is also uniformly bounded in time under $\left\| \cdot  \right\|_{L^\infty(\Omega)}$. A method due to \cite{Alikakos2, Alikakos1} based on Moser-type iterations is implemented to prove the following theorem.
\begin{theorem} \label{bdduthm}
Let $r_0 > \frac{n}{2}$ and $(u,v)$ be a classical solution of \eqref{1.1} on $\Omega \times(0,T_{\rm max})$ with maximal existence time $T_{\rm max } \in (0, \infty]$. If
\[
\sup_{t\in (0,T_{\rm max})}\left \| u(\cdot,t) \right \|_{L^{r_0}(\Omega)} < \infty,
\]
then 
\[
\sup_{t\in (0,T_{\rm max})}\left (\left \| u(\cdot,t) \right \|_{L^{\infty}(\Omega)} +\left \| v(\cdot,t) \right \|_{W^{1,\infty}(\Omega)}    \right ) <\infty.
\]
\end{theorem}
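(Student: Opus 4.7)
The plan is to implement a Moser-type iteration in the spirit of \cite{Alikakos1,Alikakos2}, bootstrapping the hypothesis $\sup_{t\in(0,T_{\max})}\|u(\cdot,t)\|_{L^{r_0}(\Omega)}<\infty$ with $r_0>n/2$ all the way to an $L^\infty$ bound on $u$. Once this is in hand, an application of Lemma \ref{Para-Reg} with $p=\infty$ (or, when $\tau=0$, standard elliptic regularity) to the $v$-equation yields the remaining $W^{1,\infty}$ estimate on $v$.

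To prepare for the iteration, the hypothesis together with Lemma \ref{Para-Reg} (or elliptic regularity when $\tau=0$) applied to the second equation already produces a uniform-in-time bound $\nabla v\in L^q(\Omega)$ for some $q>n$, which is enough to handle the chemotactic cross term throughout the iteration. Testing the first equation of \eqref{1.1} by $u^{r-1}$ for $r\geq 2$ and integrating by parts gives
\begin{align*}
\frac{1}{r}\frac{d}{dt}\int_\Omega u^r &+ \frac{4(r-1)}{r^2}\int_\Omega|\nabla u^{r/2}|^2+\mu\int_\Omega u^{r+1}\\
&=\chi(r-1)\int_\Omega u^{r-1}\nabla u\cdot\nabla v+a\int_\Omega u^r+\int_{\partial\Omega}u^{r+p-1}\,dS.
\end{align*}
The chemotactic term is dispatched with H\"older (using the $L^q$-bound on $\nabla v$) and Young, absorbing part into the dissipation $\int|\nabla u^{r/2}|^2$. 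The decisive new feature is the boundary integral, for which I would invoke Lemma \ref{Boundary-regularity} with $r':=r/2$: for $\eta$ small but independent of $r$, the contributions $\eta\int u^{r+1}$ and $\eta\int|\nabla u^{r/2}|^2$ are absorbed into $\mu\int u^{r+1}$ and the dissipation on the left, leaving only the residual $c\eta^{(n+2)/(2p-3)}\bigl(\int u^{r/2}\bigr)^2$. Interpolating $\int u^r=\|u^{r/2}\|_{L^2}^2$ between $\|\nabla u^{r/2}\|_{L^2}$ and $\|u^{r/2}\|_{L^1}=\int u^{r/2}$ via Lemma \ref{GNY} (to absorb the remaining $a\int u^r$-type term into the gradient and a lower norm) then closes the estimate into
\begin{equation*}
\frac{d}{dt}\int_\Omega u^r+C_1\int_\Omega u^r\leq C_2\,r^{\sigma}\Bigl(\int_\Omega u^{r/2}\Bigr)^2,
\end{equation*}
with $C_1,C_2,\sigma>0$ independent of $r$. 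Gronwall then yields $M_r\leq\|u_0\|_{L^r}^r+(C_2/C_1)\,r^{\sigma}M_{r/2}^2$ for $M_r:=\sup_t\|u(\cdot,t)\|_{L^r}^r$; iterating along $r_k:=2^k r_0$, the standard Alikakos bookkeeping
\begin{equation*}
\frac{\log M_{r_k}}{r_k}\leq\frac{\log M_{r_{k-1}}}{r_{k-1}}+O\!\left(\frac{\log r_k}{r_k}\right),
\end{equation*}
whose error is summable, produces $\sup_k M_{r_k}^{1/r_k}<\infty$ and hence $u\in L^\infty(\Omega\times(0,T_{\max}))$.

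The hard part will be the careful tracking of the $r$-dependence of the constants. The exponent $(n+2)/(2p-3)$ arising in Lemma \ref{Boundary-regularity} is negative (since $p<3/2$), so its prefactor $\eta^{(n+2)/(2p-3)}$ is singular as $\eta\to 0$; this must be balanced against the $r$-dependence produced by Young on the chemotactic term (which forces $\epsilon\sim 1/r$ to absorb the gradient) so that the effective coefficient $C_2 r^{\sigma}$ has only polynomial growth in $r$. A secondary technicality is that when $n/2<r_0\leq n$ the initial regularity bootstrap yields only $\nabla v\in L^q$ with $n<q<\infty$ rather than $q=\infty$; one either retains the $L^q$-framework throughout the iteration (splitting the cross term with H\"older before Young) or first runs a preliminary finite bootstrap raising $\|u\|_{L^r}$ past exponent $n$, after which Lemma \ref{Para-Reg} yields $\nabla v\in L^\infty$ and the clean Moser scheme closes.
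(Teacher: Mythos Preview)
Your proposal is correct and matches the paper's argument closely: test with a power of $u$, control the nonlinear boundary integral via Lemma \ref{Boundary-regularity}, interpolate the remaining zeroth-order terms via Lemma \ref{GNY}, and run the Moser--Alikakos iteration along $r_k=2^kr_0$, with a preliminary finite bootstrap (your second option, which the paper isolates as Lemma \ref{r-2r}) lifting the exponent past $n$ so that $\nabla v\in L^\infty$ for the clean scheme (Lemma \ref{bddu}). One small correction: the parameter $\eta$ in Lemma \ref{Boundary-regularity} cannot be taken independent of $r$, since the available dissipation coefficient $\tfrac{4(r-1)}{r^2}$ decays like $1/r$ and so cannot absorb a fixed $\eta\int|\nabla u^{r/2}|^2$; choosing $\eta\sim 1/r$ (as the paper does) makes the residual $c\,\eta^{(n+2)/(2p-3)}\sim r^{(n+2)/(3-2p)}$, which is exactly the polynomial-in-$r$ contribution feeding into your $C_2 r^\sigma$.
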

\begin{remark}
One can also follow the argument as in \cite{Quittner} to prove the above theorem.
\end{remark}
\begin{remark}
  The $L^{\frac{n}{2}+}$-criterion for homogeneous Neumann boundary conditions has been studied in \cite{NAYM} for general chemotaxis systems and in \cite{Tian3} for the fully parabolic chemotaxis system, both with and without a logistic source. However, Theorem \ref{bdduthm} not only addresses nonlinear Neumann boundary conditions, but also employs a different analysis approach compared to \cite{NAYM,Tian3}. Instead of utilizing the semigroup estimate, the analysis in Theorem \ref{bdduthm} relies on the $L^p$-regularity theory for parabolic equations.
\end{remark}
The following lemma helps us to establish the iteration process from $L^r$ to $L^\infty$ when $r>n$.
\begin{lemma}\label{bddu}
Let $(u,v)$ be a classical solution of \eqref{1.1} on $(0,T_{\rm max})$ and \[ U_r := \max \left \{ \| u_0 \|_{L^\infty(\Omega)}, \sup_{t \in (0, T_{\rm max})} \| u(\cdot, t )  \|_{L^r{\Omega)}}  \right \}.   \] 
If $\sup_{t \in (0, T_{\rm max})} \| u(\cdot, t )  \|_{L^{r}{\Omega)}} < \infty$ for some   $r>n$, then there exists constants $A,B>0$ independent of $r$ such that 
 \begin{align} \label{bddu-ine}
    U_{2r} \leq  (Ar^{B})^{\frac{1}{2r}}U_r.  
 \end{align}
\end{lemma}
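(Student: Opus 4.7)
My plan is a Moser-type iteration. Testing the first equation of \eqref{1.1} against $u^{2r-1}$ and integrating by parts (using $\partial_\nu u = u^p$ and $\partial_\nu v = 0$) yields
\begin{align*}
    \frac{d}{dt}\int_\Omega u^{2r} + \frac{2(2r-1)}{r}\int_\Omega |\nabla u^r|^2 = 2r\int_{\partial\Omega} u^{2r+p-1}\,dS + 2r(2r-1)\chi\int_\Omega u^{2r-1}\nabla u\cdot\nabla v + 2ra\int_\Omega u^{2r} - 2r\mu\int_\Omega u^{2r+1}.
\end{align*}
Since $r > n$, Lemma \ref{Para-Reg} applied to the $v$-equation provides a bound $\|\nabla v(\cdot,t)\|_{L^\infty(\Omega)} \leq M$ for $t \in (0,T_{\rm max})$; because in practice Lemma \ref{bddu} is iterated starting from a fixed $r_0>n$, the constant $M$ can be extracted from $U_{r_0}$ and treated as independent of the current exponent $r$.

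Three estimates reduce the identity above to an ordinary differential inequality in $y(t) := \int_\Omega u^{2r}(\cdot,t)$. Using $\|\nabla v\|_\infty \leq M$ and the pointwise identity $u^{2r-1}|\nabla u| = r^{-1}u^r|\nabla u^r|$, Young's inequality bounds the chemotaxis term by a small fraction of $\int_\Omega |\nabla u^r|^2$ plus a residual of order $Cr^2\int_\Omega u^{2r}$. Lemma \ref{Boundary-regularity} applied with $\eta$ polynomially small in $r$ dominates the nonlinear boundary integral by a small fraction of $\int_\Omega |\nabla u^r|^2$, a small fraction of $\int_\Omega u^{2r+1}$ (absorbable into $-2r\mu\int_\Omega u^{2r+1}$, or else handled directly by Young for small $r$), and a remainder of order $Cr^{B_0}U_r^{2r}$, the polynomial factor emerging from $\eta^{(n+2)/(2p-3)}$ together with the condition $p<\frac{3}{2}$ which makes the exponent $2p-3$ negative. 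A single application of Lemma \ref{GNY} to $f=u^r$ with parameter polynomially small in $r$ then converts the residual $Cr^2\int_\Omega u^{2r}$ into a small multiple of $\int_\Omega |\nabla u^r|^2$ (absorbed into the diffusion) plus another polynomial-in-$r$ multiple of $U_r^{2r}$. Collecting all of the above yields
\begin{align*}
    y'(t) + y(t) \leq A\, r^B\, U_r^{2r}
\end{align*}
for constants $A,B>0$ that are independent of $r$.

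Integrating this differential inequality together with the elementary bound $y(0) \leq |\Omega|\|u_0\|_{L^\infty(\Omega)}^{2r} \leq |\Omega|U_r^{2r}$ yields $\sup_t y(t) \leq A' r^B U_r^{2r}$; taking the $2r$-th root gives $\sup_t \|u(\cdot,t)\|_{L^{2r}(\Omega)} \leq (A'r^B)^{1/(2r)}U_r$, and comparison with $\|u_0\|_{L^\infty(\Omega)} \leq U_r \leq (A'r^B)^{1/(2r)}U_r$ produces \eqref{bddu-ine}. The principal technical obstacle is the careful bookkeeping of all $r$-dependence through Lemmas \ref{Boundary-regularity} and \ref{GNY}: every auxiliary parameter ($\eta$ in the boundary estimate, and the interpolation parameter in Gagliardo–Nirenberg) must be chosen only polynomially small in $r$, so that the coefficient of $U_r^{2r}$ grows at most polynomially — this is precisely what lets $A$ and $B$ be taken independent of $r$ as the statement demands.
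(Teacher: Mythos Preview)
Your proposal is correct and follows essentially the same route as the paper: test with $u^{2r-1}$, invoke Lemma~\ref{Para-Reg} for $\|\nabla v\|_{L^\infty}$, control the boundary integral via Lemma~\ref{Boundary-regularity} with $\eta$ polynomially small in $r$, absorb the $\int_\Omega u^{2r+1}$ contribution into $-2r\mu\int_\Omega u^{2r+1}$, use Lemma~\ref{GNY} to convert the residual $Cr^2\int_\Omega u^{2r}$ into a gradient term plus a polynomial-in-$r$ multiple of $\bigl(\int_\Omega u^r\bigr)^2$, and conclude by Gronwall. Your explicit remark that $M=\|\nabla v\|_{L^\infty}$ is fixed by the initial exponent $r_0$ (hence genuinely independent of the iterated $r$) is in fact a clarification the paper leaves implicit.
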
 
\begin{proof}
The main idea is first to establish an inequality as follow:
\begin{align} \label{ine-prop}
    \frac{d}{dt}\int_{\Omega} u^{2r} +\int_{\Omega} u^{2r} \leq Ar^B \left ( \int_{\Omega} u^{r} \right )^2
\end{align}
and thereafter apply Moser iteration technique.
Therefore, the dependency of all constants in terms of $r$ will be traced carefully. Multiplying the first equation in the system \eqref{1.1} by $u^{2r-1}$ we obtain
\begin{align} \label{bddu.1}
    \frac{1}{2r}\frac{d}{dt}\int_{\Omega} u^{2r} &= \int_{\Omega} u^{2r-1}u_t \notag\\
    &=  \int_\Omega u^{2r-1} \left [ \Delta u -\chi \nabla (u \nabla v)  +a u -\mu u^2 \right ] \notag \\
    &=-\frac{2r-1}{r^2}\int_{\Omega} |\nabla u^r|^2 \, dx +J,
\end{align}
where 
\begin{align} \label{bddu.2}
     J &:= \int_\Omega u^{2r-1} \left [ -\chi \nabla (u \nabla v)  +a u -\mu u^2 \right ]\notag\\
    &= \chi \frac{2r-1}{2r} \int_\Omega \nabla u^{2r} \cdot \nabla v +a \int_\Omega u^{2r} \notag \\
    &-\mu \int_\Omega u^{2r+1} +\int_{\partial \Omega} u^{p+2r-1}\, dS \notag \\
    &= \chi  \frac{2r-1}{r} \int_\Omega u^{r} \nabla u^{r}\cdot \nabla v + a \int_\Omega u^{2r} \notag \\
    &-\mu \int_\Omega u^{2r+1} +\int_{\partial \Omega} u^{p+2r-1}\, dS. 
\end{align}
By Lemma \ref{Boundary-regularity}, for any $\epsilon \in (0,1)$, we obtain
\begin{align} \label{b-e}
    \int_{\partial \Omega} u^{p+2r-1}\, dS \leq \epsilon \int_{\Omega} |\nabla u^{r}|^2 + \epsilon \int_{\Omega} u^{2r+1} +c_1\left ( \frac{1}{\epsilon} \right )^{a_1} \left ( \int_{\Omega} u^r \right )^2.
\end{align}
where $a_1=\frac{n+2}{3-2p}$. We apply Young's inequality to the first two terms of the right hand side of \eqref{bddu.2} and combine with \eqref{b-e} to have 
\begin{align} \label{bddu.3}
    J &\leq 3\epsilon \int_{\Omega} |\nabla u^r|^2 + \frac{(2r-1)^2}{4r^2 \epsilon}\chi^2 \int _{\Omega} u^{2r} |\nabla v|^2   \notag \\
    &+a\int_\Omega u^{2r}+(\epsilon-\mu) \int_\Omega u^{2r+1} +c_1\left ( \frac{1}{\epsilon} \right )^{a_1} \left ( \int_{\Omega} u^r \right )^2,
\end{align}
for all $\epsilon \in (0,1)$. Lemma \ref{Para-Reg} asserts that $v$ is in $L^\infty\left ( (0,T); W^{1,\infty}(\Omega) \right )$. Thus,
\begin{equation*}
    \sup_{0<t<T}  \left \| \nabla v \right \|^2_{L^\infty}  = c_2 <\infty,
\end{equation*}
It follows from \eqref{bddu.1} and \eqref{bddu.3} that
\begin{align}\label{bddu.5}
   \frac{d}{dt} \int_\Omega u^{2r}+\int_\Omega u^{2r} & \leq 2r\left ( -\frac{2r-1}{r^2} +3\epsilon \right ) \int_{\Omega} |\nabla u^{r}|^2 +2r(\epsilon-\mu ) \int_\Omega u^{2r+1} \notag \\  
  &+ \left [ \frac{(2r-1)^2}{2r \epsilon}\chi^2c_2 + 2ra+1 \right ] \int_{\Omega} u^{2r}+ 2rc_1\left ( \frac{1}{\epsilon} \right )^{a_1} \left ( \int_{\Omega} u^r \right )^2 .
\end{align}
Substituting $\epsilon=\min \left \{ \frac{r-1}{3r^2}, \mu  \right \} $ into \eqref{bddu.5} and noticing that $\frac{1}{\epsilon} < 6r+\mu^{-1} $, we obtain
\begin{align} \label{*}
    \frac{d}{dt}\int_\Omega u^{2r} + \int_\Omega u^{2r} \leq -2  \int_{\Omega} |\nabla u^{r}|^2+c_3r^2 \int_\Omega u^{2r}+2rc_1\left ( 6r+\mu^{-1} \right )^{a_1} \left ( \int_{\Omega} u^r \right )^2 ,
\end{align}
where  $c_3$ are independent of $r$. We apply Lemma \ref{GNY}, and substitute into \eqref{*} to obtain the following inequality for all $\eta \in (0,1)$
\begin{align}
    \frac{d}{dt}\int_\Omega u^{2r} + \int_\Omega u^{2r} \leq (c_3r^2\eta -2)\int_{\Omega} |\nabla u^{r}|^2+\left ( \frac{c_4r^2}{\eta^{\frac{n}{2}}}+ 2rc_1\left ( 6r+\mu^{-1} \right )^{a_1} \right ) \left ( \int_{\Omega} u^r \right )^2,
\end{align}
 where $c_4>0$ independent of $r,\eta$. By substituting $\eta = \min \left \{  \frac{1}{c_3r^2}, 1 \right \}$ into this, we obtain the following
\begin{align} \label{**}
    \frac{d}{dt}\int_\Omega u^{2r} + \int_\Omega u^{2r} \leq \left [ c_5r^{n+2}+2rc_1( 6r+\mu^{-1})^{a_1} \right ]\left ( \int_{\Omega} u^r \right )^2,
\end{align}
where $c_5$ independent of $r$. Since $r>n$, there exists positive constants $\Bar{A},B$ independent of $r$ such that  
\[
c_5r^{n+2}+2rc_1( 6r+\mu^{-1})^{a_1} \leq \bar{A} r^B.
\]
Indeed, since $a_1>n+2$, we have
\[
c_5r^{n+2}+2rc_1( 6r+\mu^{-1})^{a_1} \leq \left [ \frac{c_5}{n^{a_1-n-1}} +2c_1 \left (6 +\frac{1}{\mu n} \right )^{a_1} \right ]r^{a_1+1}.
\]
By substituting this into \eqref{**}, we obtain \eqref{ine-prop}. Applying Gronwall's inequality, we have 
\[
\int_\Omega u^{2r}(\cdot,t)  \leq \max \left \{ \bar{A} r^B U_r^{2r} ,\int_\Omega u_0^{2r}    \right \}
\]
This entails
\begin{align*}
    \left \| u(\cdot, t)  \right \|_{L^{2r}(\Omega)} \leq \max \left \{ (\bar{A}r^B)^\frac{1}{2r} U_r,  |\Omega|^{\frac{1}{2r}}  \left \| u_0 \right \|_{L^{\infty}(\Omega)}  \right \},
\end{align*}
and further implies that 
\begin{align*}
    U_{2r} \leq ({A} r^B)^\frac{1}{2r} U_r
\end{align*}
where ${A} = \max \left \{ \bar{A}, |\Omega|n^{-B} \right \}$. The proof of \eqref{bddu-ine} is complete.
\end{proof}
The next lemma will fill in the gap of the iteration process from $L^r$ to $L^n$ when $\frac{n}{2}<r \leq n$.
\begin{lemma} \label{r-2r}
Let $(u,v)$ be the classical solution to \eqref{1.1} on $\Omega \times(0,T_{\rm max})$ with maximal existence time $T_{\rm max } \in (0, \infty]$. If $u \in L^\infty \left ( (0,T_{\rm max}); L^r(\Omega)  \right )$ for some $ \frac{n}{2}<r \leq n$, then  $u \in L^\infty \left ( (0,T_{\rm max}); L^{2r}(\Omega)  \right )$.
\end{lemma}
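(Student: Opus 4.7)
The plan is to mirror the energy estimate in the proof of Lemma \ref{bddu} but, because $r \leq n$ no longer puts $\nabla v$ in $L^\infty$, to replace the pointwise bound on $|\nabla v|$ by an $L^q$-estimate coming from Lemma \ref{Para-Reg} together with a Gagliardo--Nirenberg interpolation. Since $r$ is fixed throughout this lemma, the careful $r$-tracking from Lemma \ref{bddu} is unnecessary; all constants below may depend on $r$.

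First I would exploit the hypothesis $r > n/2$: applying Lemma \ref{Para-Reg} to the $v$-equation of \eqref{1.1} with source $\alpha u \in L^\infty((0,T_{\max}); L^r(\Omega))$, we obtain some exponent $q > n$ (for instance any $q$ with $n < q < nr/(n-r)$ when $r<n$, and any $q < \infty$ when $r = n$) such that $\nabla v$ is bounded in $L^\infty((0,T_{\max}); L^q(\Omega))$. Call this bound $K$. The strict inequality $q > n$ is exactly what will make a Gagliardo--Nirenberg interpolation exponent strictly less than one.

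Next I would test the $u$-equation against $u^{2r-1}$, as in \eqref{bddu.1}--\eqref{bddu.2}, to produce
\begin{align*}
\frac{1}{2r}\frac{d}{dt}\int_\Omega u^{2r}
= -\frac{2r-1}{r^2}\int_\Omega |\nabla u^r|^2
+ \chi\frac{2r-1}{r}\int_\Omega u^r \nabla u^r\cdot\nabla v
+ a\int_\Omega u^{2r} - \mu\int_\Omega u^{2r+1}
+ \int_{\partial\Omega}u^{p+2r-1}\,dS.
\end{align*}
Young's inequality gives $\chi\frac{2r-1}{r}\int u^r \nabla u^r\cdot \nabla v \leq \epsilon \int|\nabla u^r|^2 + C(\epsilon)\int u^{2r}|\nabla v|^2$, and the main new ingredient is to control $\int u^{2r}|\nabla v|^2$. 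By H\"older with conjugate pair $(q/(q-2), q/2)$,
\begin{align*}
\int_\Omega u^{2r}|\nabla v|^2 \leq \|u^r\|_{L^{2q/(q-2)}}^{2}\|\nabla v\|_{L^q}^2 \leq K\|u^r\|_{L^{2q/(q-2)}}^{2}.
\end{align*}
Lemma \ref{GN} applied to $u^r$ with $p=2q/(q-2)$, $r=2$, $q=s=1$ produces an interpolation exponent
$a = \frac{n(q+2)}{q(n+2)} \in (0,1)$
because $q > n$, so that
\begin{align*}
\|u^r\|_{L^{2q/(q-2)}}^{2} \leq C\bigl(\|\nabla u^r\|_{L^2}^{2a}\|u^r\|_{L^1}^{2(1-a)} + \|u^r\|_{L^1}^{2}\bigr).
\end{align*}
Since $\|u^r\|_{L^1} = \|u(\cdot,t)\|_{L^r(\Omega)}^r$ is bounded in $t$, one more application of Young's inequality (using $2a<2$) turns this into
\begin{align*}
\int_\Omega u^{2r}|\nabla v|^2 \leq \epsilon \int_\Omega |\nabla u^r|^2 + C(\epsilon).
\end{align*}

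Finally, I would dispose of the boundary term through Lemma \ref{Boundary-est} with the given $r$, which absorbs $\int_{\partial\Omega}u^{p+2r-1}$ into $\epsilon \int |\nabla u^r|^2 + \epsilon \int u^{2r+1} + C$, and of the $a\int u^{2r}$ and an auxiliary $\int u^{2r}$ (to obtain an absorption term on the left) by Young's inequality into $\epsilon\int u^{2r+1}+C$. Choosing $\epsilon$ small enough that $\frac{2(2r-1)}{r} - 6r\epsilon > 0$ and $\mu - 4\epsilon > 0$ lets us absorb the gradient and super-critical terms and gives
\begin{align*}
\frac{d}{dt}\int_\Omega u^{2r} + \int_\Omega u^{2r} \leq C,
\end{align*}
after which Gronwall yields the desired $L^{2r}$-bound. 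The main conceptual obstacle is precisely the loss of the pointwise bound on $\nabla v$; the whole argument hinges on the fact that the assumption $r > n/2$ is strong enough to place $\nabla v$ in $L^q$ for some $q$ strictly above $n$, which is what makes the Gagliardo--Nirenberg exponent $a$ strictly less than one and allows the critical $\int u^{2r}|\nabla v|^2$ term to be absorbed.
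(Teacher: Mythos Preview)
Your proof is correct and follows essentially the same strategy as the paper's: both obtain an $L^q$-bound on $\nabla v$ for some $q>n$ from Lemma~\ref{Para-Reg}, use H\"older's inequality to isolate $\int u^{2r}|\nabla v|^2$, then interpolate the resulting $u^r$-norm via Gagliardo--Nirenberg against the known $\|u^r\|_{L^1}=\|u\|_{L^r}^r$ bound, and absorb with Young. The only cosmetic difference is the bookkeeping of the H\"older exponent: the paper fixes a specific $\lambda$ (equal to the Sobolev exponent $2n/(n-2)$ for $n\geq 3$) and writes the bound as $(\int u^{2r+\lambda})^{2r/(2r+\lambda)}$, whereas you work directly with $\|u^r\|_{L^{2q/(q-2)}}$ for generic $q>n$; these are equivalent parameterizations of the same estimate.
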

\begin{proof}
 By Lemma \ref{Para-Reg} we see that $v$ is in $L^\infty\left ( (0,T); W^{1,q}(\Omega) \right )$ for $q<\frac{rn}{n-r}$ if $r<n$ and any $q<\infty$ if $r=n$. Set
\begin{align}
    \lambda := \left\{\begin{matrix}
\frac{2n}{n-2} & \text{ if }n \geq 3 \\ 
 \frac{1}{r-1}& \text{ if } n=2,
\end{matrix}\right.
\end{align}
 and apply Holder's inequality yields
\begin{equation}
    \int_{\Omega}u^{2r}|\nabla v|^2 \leq \left ( \int_{\Omega}u^{2r+\lambda} \right )^{\frac{2r}{2r+\lambda}}\left (\int_{\Omega} |\nabla v|^{\frac{2(2r+\lambda)}{\lambda}} \right )^{\frac{\lambda}{2r+\lambda}}.
\end{equation} 
Since $\frac{n}{2}<r<n$, we find that
\[
\frac{2(2r+\lambda)}{\lambda} < \frac{rn}{n-r} ,
\]
therefore $v$ belongs to $L^\infty\left ( (0,T); W^{1,\frac{2(2r+\lambda)}{\lambda}}(\Omega) \right )$. Clearly, $v$ is in $L^\infty\left ( (0,T); W^{1,\frac{2(2r+\lambda)}{\lambda}}(\Omega) \right )$ when $r=n$.  Thus,
\begin{equation*}
    \sup_{0<t<T} \left \| \nabla v \right \|^2_{L^{\frac{2(2r+\lambda)}{\lambda}}}  = c_5 <\infty,
\end{equation*}
notice that
\begin{equation} \label{bddu.13}
    \int_{\Omega} u^{2r} \leq |\Omega|^{\frac{\lambda}{2r+\lambda}} \left ( \int_{\Omega} u^{2r+\lambda} \right )^{\frac{2r}{2r+\lambda}} 
\end{equation}
By the similar argument as in \eqref{bddu.5} and combine with \eqref{bddu.13}, it follows that
\begin{align}\label{bddu.13'}
  \frac{d}{dt} \int_\Omega u^{2r} +\int_\Omega u^{2r} & \leq 2r\left ( -\frac{2r-1}{r^2} +3\epsilon \right ) \int_{\Omega} |\nabla u^{r}|^2 +2r(\epsilon-\mu) \int_\Omega u^{2r+1} \notag \\  
  &+ c_6 \left ( \int_{\Omega}u^{2r+\lambda} \right )^{\frac{2r}{2r+\lambda}}+ 2rc_1\left ( \frac{1}{\epsilon} \right )^{a_1} \left ( \int_{\Omega} u^r \right )^2 ,
\end{align}
where $c_6=\left [ \frac{(2r-1)^2}{2r \epsilon}\chi^2c_5 + 2ra|\Omega|^{\frac{\lambda}{2r+\lambda}}+1 \right ]$.
Substitute $\epsilon = \min \left \{ \frac{r-1}{3r^2}, \mu  \right \}$ into this yields
\begin{equation} \label{bd-1}
    \frac{d}{dt} \int_\Omega u^{2r} + \int_\Omega u^{2r}  \leq -2\int_{\Omega} |\nabla u^r|^2+ c_6\left ( \int_{\Omega}u^{2r+\lambda} \right )^{\frac{2r}{2r+\lambda}} +c_7,
\end{equation}
where $c_7:= 2rc_1U_r^{2r}$. Apply GN inequality, and then Young's inequality yields 
\begin{align} \label{bd-2}
    c_6\left ( \int_{\Omega}u^{2r+\lambda} \right )^{\frac{2r}{2r+\lambda}} &\leq c_6C_{GN} \left ( \int_\Omega |\nabla u^r|^2 \right )^s\left ( \int_\Omega u^r \right )^{2(1-s)} +c_6C_{GN}\left ( \int_\Omega u^r  \right )^2 \notag \\
    &\leq c_6C_{GN}U_r^{2r(1-s)}\left ( \int_\Omega |\nabla u^r|^2 \right )^s +c_6C_{GN}U_r^{2r} \notag \\
    &\leq  \int_\Omega |\nabla u^r|^2 +c_8,
\end{align}
where $s=\frac{2n(r+\lambda)}{(n+2)(2r+\lambda)} \in (0,1)$, and $c_8=c_8(r,n,\Omega,U_r)$. From \eqref{bd-1} and \eqref{bd-2}, we obtain
\begin{align}
     \frac{d}{dt} \int_\Omega u^{2r} + \int_\Omega u^{2r}  \leq c_9,
\end{align}
where $c_9=c_7+c_8$. Apply Gronwall's inequality to this yields
\[
\sup_{t \in (0,T_{\rm max})} \left \| u(\cdot,t) \right \|_{L^{2r}(\Omega)} \leq \max \left \{ \left \| u_0  \right \|_{L^\infty(\Omega)},c_9 \right \}.
\]
and thereby conclude the proof.
\end{proof}
We are now ready to prove Theorem \ref{bdduthm}.
\begin{proof}[Proof of Theorem \ref{bdduthm}]
If $r_0 \in \left ( \frac{n}{2}, n  \right ]$, then
\[
\sup_{t \in (0,T_{\rm max})} \left \| u(\cdot,t) \right \|_{L^{2r_0}(\Omega)} < \infty.
\]
 due to Lemma \ref{r-2r}. Since $2r_0>n$ meets the criteria of Lemma \ref{bddu}, the following inequality holds
\begin{align} \label{f}
    U_{2^{k+1}r_0} \leq \left (  A(2^{k}r_0)^B \right )^{\frac{1}{2^{k+1}r_0}}U_{2^kr_0}.
\end{align}
for all integers $k \geq 1$. If $r_0>n$, then \eqref{f} is an immediate consequence of Lemma \ref{bddu}.
We take $\log$ of the above inequality to obtain
\[
\ln  U_{2^{k+1}r_0} \leq a_k+ \ln  U_{2^{k}r_0},
\]
where
  \begin{align*}
      a_k&= \frac{\ln A}{2^{k+1}r_0}+ \frac{Bk \ln 2}{2^{k+1} r_0} + \frac{B \ln r_0}{2^{k+1}r_0}.
  \end{align*}
One can verify that 
\begin{align*}
    \sum_{k=1}^{\infty} a_k = \frac{\ln \left (A(4r_0)^B  \right ) }{2r_0}.
\end{align*}
Thus, we obtain
\begin{equation}
     U_{2^{k+1}r}\leq A^\frac{1}{2r_0}(4r_0)^{\frac{B}{2r_0}}U _{2r_0} 
  \end{equation}
  for all $k\geq 1$. Send $k \to \infty$ yields
  \begin{equation}
       U_{\infty}\leq  A^\frac{1}{2r_0}(4r_0)^{\frac{B}{2r_0}}U _{2r_0}. 
  \end{equation}
 This asserts that $u \in L^\infty \left ( (0,T_{\rm max}); L^\infty (\Omega) \right )$, and thereafter Lemma \ref{Para-Reg} yields that $v \in L^\infty \left ( (0,T_{\rm max}); W^{1,\infty}(\Omega) \right )$. 
\end{proof}

\section{Proof of main theorems}\label{proof of main theorems}
We are now ready to prove our main results. Throughout this section, unless specified otherwise, the notation $C$ represents constants that may vary from time to time. Let us begin with the proof of the parabolic-elliptic system.
\begin{proof}[Proof of Theorem \ref{p-e-e}]
In case $\mu >\frac{n-2}{n}\chi \alpha$, we obtain $\frac{\chi \alpha}{(\chi \alpha-\mu)_+}>\frac{n}{2}$. We first apply Lemma
\ref{lpboundedness-p-e-e} to have $u \in L^\infty\left ((0,T_{\rm max}); L^{q}(\Omega)  \right )$  for any $q\in \left ( \frac{n}{2}, \frac{\chi \alpha}{(\chi \alpha-\mu)_+} \right ) $ and thereby conclude that $u \in L^\infty\left ((0,T_{\rm max}); L^{\infty}(\Omega)  \right )$ thank to Theorem \ref{bdduthm}. \\
When $\mu =\frac{n-2}{n}\chi \alpha$, let $w= u^{\frac{n}{4}}$, and apply trace embedding Theorem $W^{1,1}(\Omega) \to L^1(\partial \Omega)$, we have
\begin{align} \label{proof.thm.1}
    \int_{\partial \Omega} w^{2+\frac{4(p-1)}{n}} &\leq C \int_\Omega  w^{2+\frac{4(p-1)}{n}} +C \left (2+\frac{4(p-1)}{n} \right )\int_\Omega w^{1+\frac{4(p-1)}{n}}|\nabla w| \notag \\
    &\leq  C \int_\Omega  w^{2+\frac{4(p-1)}{n}} +3C\int_\Omega w^{1+\frac{4(p-1)}{n}}|\nabla w| \notag \\
    &\leq C \int_\Omega  w^{2+\frac{4(p-1)}{n}} + \frac{\epsilon }{2}\int_\Omega |\nabla w|^2 +C(\epsilon) \int_\Omega w^{2+\frac{8(p-1)}{n}},
\end{align}
where the last inequality comes from Young's inequality for any arbitrary $\epsilon>0$. By Lemma \ref{GN}, we have 
\begin{align} \label{proof.thm.2}
     \int_\Omega w^{2+\frac{8(p-1)}{n}} \leq C \left ( \int_\Omega |\nabla w|^2 \right )^\frac{\bar{p}a}{2} \left ( \int_\Omega w^{\frac{4}{n}} \right )^{\frac{n\bar{p}(1-a)}{4}} + C \left ( \int_\Omega w^{\frac{4}{n}} \right )^{\frac{n\bar{p}}{4}},
\end{align}
where 
\begin{align*}
    \bar{p} &=2+\frac{8(p-1)}{n}, \\
    a &=\frac{n^2(n+4(p-1)+2)}{(n+4(p-1))(n^2-2n+4)},\\
    \frac{\bar{p}a}{2} &= \frac{n^2+4(p-1)n-2n}{n^2-2n+4} <1, \qquad \text{ since } p<1+\frac{1}{n}.
\end{align*}
We make use of uniformly boundedness of $\int_\Omega u$ and then apply Young's inequality into \eqref{proof.thm.2} to obtain:
\begin{align}
    \int_\Omega w^{2+\frac{8(p-1)}{n}} \leq \frac{\epsilon }{2} \int_\Omega |\nabla w|^2 +C(\epsilon),
\end{align}
for any $\epsilon>0$. We apply Young's inequality in \eqref{proof.thm.1} and then use \eqref{proof.thm.2} to have
\begin{align}
    \int_{\partial \Omega} w^{2+\frac{4(p-1)}{n}} \leq \epsilon \int_\Omega |\nabla w|^2 + C(\epsilon).
\end{align}
This implies that
\begin{align} \label{proof.thm.3}
    \int_{\partial \Omega} u^{\frac{n}{2}+p-1} \leq \epsilon \int_\Omega |\nabla u^{\frac{n}{4}}|^2 +C(\epsilon).
\end{align}
Substitute $r=\frac{n}{4}$ and $\mu =\frac{n-2}{n}\chi \alpha$ into \eqref{l1.1}, we have
\begin{align} \label{proof.thm.4}
    \frac{d}{dt} \int_\Omega u^{\frac{n}{2}} \leq -\frac{4(n-2)}{n} \int_{\Omega} |\nabla u^{\frac{n}{4}}|^2 +\frac{n}{2} \int_{\partial \Omega} u^{\frac{n}{2}+p-1}\, dS +\frac{na}{2} \int_\Omega u^{\frac{n}{2}}.
\end{align}
We apply Lemma \ref{GN} and uniform boundedness of $\int_\Omega u$ to obtain that
\begin{align}
   \int_\Omega u^{\frac{n}{2}} \leq \epsilon \int_\Omega |\nabla u^{\frac{n}{4}}|^2 +C(\epsilon).
\end{align}
This, together with \eqref{proof.thm.3} and \eqref{proof.thm.4} with $\epsilon$ sufficiently small implies that 
\[
\frac{d}{dt} \int_\Omega u^{\frac{n}{2}} + \int_\Omega u^{\frac{n}{2}}  \leq C.
\]
Thus, by Gronwall's inequality we obtain that 
\begin{align}  \label{proof.thm.5}
    \sup_{t \in (0,T_{\rm max})} \int_\Omega u^{\frac{n}{2}} < \infty.
\end{align}
For any $\epsilon\in (0,1)$, we choose $\frac{n}{4}<r < \frac{n}{4}+\frac{n\epsilon}{4\chi \alpha} $ and substitute into \eqref{l1.6} to obtain that
\begin{align}
    \frac{d}{dt} \int_\Omega u^{2r}+\int_\Omega u^{2r} \leq \left [ 2r\epsilon -\frac{2(2r-1)}{r} \right ] \int_\Omega |\nabla u^r|^2 +3\epsilon \int_\Omega u^{2r+1} +C.
\end{align}
We apply Lemma \ref{GN} and \eqref{proof.thm.5} to obtain
\begin{align} \label{proof.thm.6}
     \int_\Omega u^{2r+1} &\leq C \left ( \int_\Omega |\nabla u^r|^2 \right )\left ( \int_\Omega  u^{\frac{n}{2}} \right )^{\frac{2}{n}}+\left ( \int_\Omega  u \right )^{2r+1} \notag \\
     &\leq C\int_\Omega |\nabla u^r|^2 +C,
\end{align}
where $C $ is independent of $r$. Therefore, we have
\begin{align} \label{proof.thm.7}
     \frac{d}{dt} \int_\Omega u^{2r}+\int_\Omega u^{2r} \leq \left [ 2r\epsilon +3C\epsilon - \frac{2(2r-1)}{r} \right ] \int_\Omega |\nabla u^r|^2 +C.
\end{align}
We now have to choose $\epsilon$ and $r>\frac{n}{4}$ such that 
\begin{align}  \label{proof.thm.8}
    \frac{4\chi \alpha}{n} \left ( r- \frac{n}{4} \right )< \epsilon <\frac{2(2r-1)}{r(2r+C)},
\end{align}
which is possible for any $r$ satisfying 
\[
\frac{n}{4} <r <\frac{n}{4} \left ( \frac{4(n-2)}{\frac{n}{2} +\frac{n}{2\chi \alpha}+3C}+1\right ).
\]
This, together with \eqref{proof.thm.7}, \eqref{proof.thm.8} implies that there exists some $r_0>\frac{n}{2}$ such that $u \in L^\infty \left ( (0,T_{\rm max}); L^{r_0}(\Omega) \right )$. We finally complete the proof by applying Theorem \ref{bdduthm}
\end{proof}
Next we prove the main theorems for the parabolic-parabolic system in two- and three-dimensional space.
\begin{proof}[Proof of Theorem \ref{2dthm} and Theorem \ref{3d}]
Theorem \ref{2dthm} and Theorem \ref{3d} are immediate consequences of Lemma \ref{L2est}, Lemma \ref{L2est-3d} and Theorem \ref{bdduthm}.
\end{proof}

\section*{Acknowledgement}
The author would like to express deepest appreciation to Professor Zhengfang Zhou for his unwavering encouragement, support, and numerous fruitful discussions. The author is profoundly grateful to Professor Michael Winkler for his valuable comments and suggestions. Lastly, the author would like to express gratitude to the referee for carefully reading the manuscript and providing constructive feedback as well as helpful references.

\printbibliography

\end{document}